\documentclass{amsart}
\usepackage{amsmath}
\usepackage{amssymb}
\usepackage{color}
\usepackage{graphicx}

\newcommand{\Der}{\operatorname{D^b}}
\newcommand{\gldim}{\operatorname{gldim}}
\newcommand{\End}{\operatorname{End}}
\newcommand{\Ext}{\operatorname{Ext}}
\newcommand{\rad}{\operatorname{rad}}
\newcommand{\cyc}{{\operatorname{cyc}}}
\newcommand{\Clu}{\mathcal{C}}
\newcommand{\Tensor}{\operatorname{T}}
\newcommand{\dual}{\operatorname{D}}
\newcommand{\gen}[1]{\langle #1\rangle}
\newcommand{\ealg}[1]{\operatorname{R}(#1)}
\newcommand{\calg}[1]{\operatorname{C}(#1)}
\newcommand{\standard}[1]{S(#1)}

\newtheorem{theorem}{Theorem}[section]
\newtheorem{theoremunotres}{Theorem 1.4}

\newtheorem{lemma}[theorem]{Lemma}
\newtheorem{proposition}[theorem]{Proposition}
\newtheorem{corollary}[theorem]{Corollary}
\theoremstyle{definition}
\newtheorem{example}[theorem]{Example}
\newtheorem{definition}[theorem]{Definition}
\newtheorem{remark}[theorem]{Remark}{

\renewcommand{\AA}{\mathbb{A}}
\newcommand{\DD}{\mathbb{D}}
\newcommand{\EE}{\mathbb{E}}

\newcommand{\HVCenter}[1]{\setbox 0=\hbox{#1}%
        \dimen0=\wd0%
        \dimen1=\ht0%
        \divide\dimen0 by 2%
        \divide\dimen1 by 2%
        \hskip -\dimen0%
        \lower \dimen1%
        \box0%
        \hskip -\dimen0}
\newcommand{\HBCenter}[1]{\setbox 0=\hbox{#1}%
        \dimen0=\wd0%
        \dimen1=\ht0%
        \divide\dimen0 by 2%
        \hskip -\dimen0%
        \box0%
        \hskip -\dimen0}
\newcommand{\HTCenter}[1]{\setbox 0=\hbox{#1}%
        \dimen0=\wd0%
        \dimen1=\ht0%
        \divide\dimen0 by 2%
        \hskip -\dimen0%
        \lower \dimen1%
        \box0%
        \hskip -\dimen0}
\newcommand{\RTCenter}[1]{\setbox 0=\hbox{#1}%
        \dimen0=\wd0%
        \dimen1=\ht0%
        \hskip -\dimen0%
        \lower \dimen1%
        \box0%
        \hskip -\dimen0}
\newcommand{\LTCenter}[1]{\setbox 0=\hbox{#1}%
        \dimen0=\wd0%
        \dimen1=\ht0%
        \lower \dimen1%
        \box0%
        \hskip -\dimen0}
\newcommand{\RVCenter}[1]{\setbox 0=\hbox{#1}%
        \dimen0=\wd0%
        \dimen1=\ht0%
        \divide\dimen1 by 2%
        \hskip -\dimen0%
        \lower \dimen1%
        \box0%
        \hskip -\dimen0}
\newcommand{\RBCenter}[1]{\setbox 0=\hbox{#1}%
        \dimen0=\wd0%
        \dimen1=\ht0%
        \hskip -\dimen0%
        \box0%
        \hskip -\dimen0}
\newcommand{\LVCenter}[1]{\setbox 0=\hbox{#1}%
        \dimen1=\ht0%
        \divide\dimen1 by 2%
        \lower \dimen1%
        \box0%
        \hskip -\dimen0}
\newcommand{\myjoin}{\begin{picture}(20,6)\put(3,3){\line(1,0){13
}}\end{picture}}

\parindent=0pt
\parskip=4pt

\begin{document}

\title{Cluster tilted algebras with a cyclically oriented quiver}
\author{Michael Barot}
\email{barot@matem.unam.mx}
\address{Instituto de Matem\'aticas, Universidad Nacional Aut\'onoma
de M\'exico, Ciudad Universitaria, C.P. 04510, Distrito Federal, Mexico.}
\author{Sonia Trepode}
\email{strepode@mdp.edu.ar}
\address{Departamento de Matemática, Facultad de Ciencias Exactas y Naturales, Funes 3350, Universidad Nacional de Mar del Plata, 7600 Mar del Plata, Argentina.}

\thanks{
The second author is a researcher from CONICET, Argentina. The
authors acknowledge partial support from the collaboration
project Argentina--M\'exico
MINCyT--CONACyT MX 0702, from CONICET,
Argentina and CONACyT, M\'exico. Both authors also would like to thank
Elsa Fern\'andez for her warm reception at Universidad Nacional de la
Patagonia San Juan Bosco, Puerto Madryn, Argentina.}

\begin{abstract}

In association with a finite dimensional algebra A of global
dimension two, we consider the endomorphism algebra of A, viewed as
an object in the triangulated hull of the orbit category of the
bounded derived category, in the sense of Amiot. We
characterize the algebras A  of global dimension two such that its
endomorphism algebra is isomorphic to a cluster-tilted algebra with
a cyclically oriented quiver. Furthermore, in the case that the
cluster tilted algebra with a cyclically oriented quiver is of
Dynkin or extended Dynkin type then A is derived equivalent to a hereditary
algebra of the same type.
\end{abstract}

\maketitle

\section{Introduction}

Let $A$ be a finite-dimensional algebra over an algebraically closed
field $k$. We denote by $\Der(A)$ the bounded derived category of
the category of finite-dimensional (left) $A$-modules. We denote by
$\tau$ the Auslander-Reiten translation and by $S$ the suspension of
$\Der(A)$.

Amiot showed in \cite{Amiot} that if the global dimension of $A$ is
less than or equal to two then the orbit category
$\Der(A)/\tau^{-1}S$ can be embedded fully faithfully in a
triangulated category $\Clu_A$, called the \emph{cluster category}
of $A$. This embedding is an equivalence if $A$ is a hereditary
algebra. In any case, $\End_{\Clu_A}(A)$ is isomorphic to the tensor
algebra $\calg{A}:=\Tensor_A(\Ext^2(\dual A,A))$. We say that the
algebra $A$ is \emph{derived equivalent} to the algebra $B$, if
$\Der(A)$ and $\Der(B)$ are triangle equivalent categories.
 If $A$ is derived
equivalent to a hereditary  algebra $H=k Q$ then the algebra $\calg{A}$ is
called \emph{cluster-tilted} algebra of type $Q$.

We say that a quiver $Q$ is  \emph{cyclically oriented}, if each
\emph{chordless} cycle is cyclically oriented, see \cite{BGZ} and
Section~\ref{sec:cyc-or}. We say that two paths $\gamma$ and
$\delta$ in a quiver are \emph{parallel} (resp. \emph{antiparallel})
if the have the same start $s(\gamma)=s(\delta)$ and end vertex
$e(\gamma)=e(\delta)$ (resp. if $e(\gamma)=s(\delta)$ and
$s(\gamma)=e(\delta)$). It follows from \cite{BGZ} and \cite{BRS}
that cluster-tilted algebras of Dynkin type $Q$ are characterized by
the fact that all the quivers in the mutation class of the quiver
$Q$, in the sense of \cite{FZ}, are cyclically oriented.

In this article we consider the problem of characterizing the
algebras $A$ of global dimension two, having $\calg{A}$ isomorphic
to a cluster-tilted algebra $C$. We solve the problem when the
the quiver $Q_C$ of $C$ is cyclically oriented. For solving
this problem, it was necessary to give an explicit description of
the defining relations of the class of cluster-tilted algebras with
a cyclically oriented quiver. This description generalizes the
result proved in \cite{BMR2} in  the case that $C$ is a
cluster-tilted of finite representation type.
We recall that a relation $\rho$ is called \emph{minimal} if whenever $ \rho= \sum_i \beta_i \rho_i\gamma_i $ where $\rho_i$ is a relation for every $i$, then
$\beta_i $ and $\gamma_i$ are scalars for some index $i$, (see \cite{BMR2}).

\begin{proposition}
\label{thm:cy-or-clu-tilt} If $C$ is a cluster-tilted algebra of any type whose
quiver $Q_C$ is cyclically oriented, then to each arrow $\alpha$,
belonging to an oriented cycle, the sum $\rho_\alpha$ of all paths
which are antiparallel to $\alpha$ is a  for $C$.
Moreover these are all minimal relations for $C$.
\end{proposition}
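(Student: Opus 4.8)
The plan is to work through the relation-extension realisation of $C$. Since $C$ is cluster-tilted of some type $Q$, the cluster-tilting theory (cf.\ \cite{Amiot}) lets us write $C\cong\calg{B}$ for a tilted algebra $B=kQ_B/I_B$ of type $Q$, with $I_B$ generated by a family of minimal relations; moreover $\calg{B}=\Tensor_B(E)$ with $E=\Ext^2(\dual B,B)$ is here the trivial (square-zero) extension $B\ltimes E$, so that $B$ sits in $C$ both as a subalgebra and as the quotient $C/E$. Consequently $Q_C$ has the same vertices as $Q_B$, its arrows are those of $Q_B$ together with one ``new'' arrow for each vector of a fixed basis of $E/(\rad B\cdot E+E\cdot\rad B)$, and each new arrow completes the paths supporting a minimal relation of $B$ to an oriented cycle. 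The first step is then to pin down the combinatorics of oriented cycles in $Q_C$ under the standing hypothesis: that an arrow lies on an oriented cycle iff it lies on a chordless one (a property of cyclically oriented quivers in the spirit of \cite{BGZ}), that the new arrows are exactly those closing up minimal relations of $B$, and --- crucially for what follows --- a tight description of all paths antiparallel to a given arrow $\alpha$ on a cycle.

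For the first assertion I would fix such an $\alpha$ and split the paths antiparallel to it according to how many new arrows they traverse. Since $E\cdot E=0$ in $C$, only paths traversing at most one new arrow can be nonzero there. If $\alpha$ is itself a new arrow, the surviving antiparallel paths lie in $Q_B$ and, modulo paths factoring through other new arrows (which one shows vanish in $C$), $\rho_\alpha$ is precisely a minimal relation of the tilted algebra $B$, hence zero in $C$ because $B$ is a subalgebra; if $\alpha$ is an old arrow, each surviving antiparallel path passes through exactly one new arrow, and, using the $B$-bimodule structure of $E$ together with the relations $I_B$, these paths group into families whose contributions cancel in $C$. The crux --- and what I expect to be the main technical obstacle --- is the uniform statement that \emph{no} antiparallel path to $\alpha$ survives in $C$ except those completing $\alpha$ to a chordless oriented cycle, together with the matching of those with the tilted structure: this is where cyclic orientation is essential (a chord on such a cycle would, through $I_B$ or through another $\rho_\beta$, force the path to lie in $I_C$), and it is the general-type analogue of the finite-type computation of \cite{BMR2}. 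Granting it, $\rho_\alpha\in I_C$ for every $\alpha$ on a cycle, and these are relations in the required sense because, $Q_C$ having no $2$-cycles, every antiparallel path has length $\geq 2$.

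For minimality and exhaustiveness I would use the identification of minimal relations with a basis of $\Ext^2$ between simple modules: the minimal relations of $C$ supported on paths from a vertex $a$ to a vertex $b$ form a basis of $\Ext^2_C(\standard{a},\standard{b})$. Computing this space from a minimal projective presentation of $\standard{a}$ over $C$, whose relevant syzygy term is governed by the $\rho_\beta$, one shows that the classes of the $\rho_\alpha$ with $\alpha$ an arrow from $b$ to $a$ lying on an oriented cycle are linearly independent in $\Ext^2_C(\standard{a},\standard{b})$ and span it; in particular each $\rho_\alpha$ is a nonzero class, hence a minimal relation, and no further minimal relations occur. Equivalently, and as a cross-check, since the $\rho_\alpha$ are relations there is a surjection $kQ_C/\gen{\rho_\alpha}\to C$, and one computes that a spanning set of $kQ_C/\gen{\rho_\alpha}$ is given by the paths of $Q_C$ avoiding every $\rho_\alpha$, whose number --- again by the combinatorics of cyclically oriented quivers from \cite{BGZ} --- matches $\dim_k C=\dim_k B+\dim_k\Ext^2(\dual B,B)$; this forces the surjection to be an isomorphism, so $\gen{\rho_\alpha}=I_C$ and the $\rho_\alpha$ are precisely the minimal relations of $C$.
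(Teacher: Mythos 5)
There is a genuine gap here, and in fact you flag it yourself: the statement you call ``the crux'' --- that for an old arrow $\alpha$ the antiparallel paths through exactly one new arrow ``group into families whose contributions cancel,'' and that only the paths completing $\alpha$ to a chordless cycle survive --- is exactly the content of the proposition, and you grant it rather than prove it. Two further problems compound this. First, your route goes through a tilted algebra $B$ with $C\cong B\ltimes\Ext^2(\dual B,B)$, which requires knowing the minimal relations $I_B$ of $B$; but these are not determined by $Q_C$, and the paper explicitly organizes its proof to avoid this (it derives the relations from the ordinary quiver of $C$ alone, precisely because constructing the potential or the relation extension would presuppose the relations of the tilted algebra). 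Second, even granting that \emph{some} combination $\sum_i\lambda_i\delta_i$ with $\lambda_i\neq 0$ is a minimal relation, the claim that the sum with all coefficients equal to $1$ is a relation needs a separate normalization argument; ``cancellation'' from the bimodule structure gives no control on the $\lambda_i$. The paper handles this with an explicit rescaling isomorphism $f(\beta)=f_\beta\beta$ built from an admissible cut and the component decomposition $\overline{\Gamma}_x,\overline{\Gamma}_y,\overline{\Gamma}_1,\ldots,\overline{\Gamma}_t$ (Propositions~\ref{prop:cyc-or-quiv3}, \ref{prop:adm-cut-exists}, \ref{prop:deformed}), showing $C\simeq\standard{Q_C}$. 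Your concluding dimension count (``the number of paths avoiding every $\rho_\alpha$ matches $\dim_k C$ by the combinatorics of \cite{BGZ}'') is also unsubstantiated; \cite{BGZ} contains no such count.

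For contrast, the paper's actual mechanism is quite different and worth internalizing: it uses that idempotent quotients of cluster-tilted algebras are cluster-tilted (\cite{BMR}) to kill all vertices outside a suspected relation, reducing to the oriented cycle $C_n$ and to the two-armed quiver $G(m,n)$, where the finite-type uniqueness result of \cite{BMR2} pins down the relations; a homotopy argument then forces all shortest antiparallel paths into a single minimal relation with nonzero coefficients (Proposition~\ref{prop:clu-tilt-canonical}), \cite[Lemma 7.2]{BRS} plus the absence of double arrows gives the bijection between such arrows and minimal relations (Proposition~\ref{prop:rel-correspond-arrow}), and the rescaling isomorphism normalizes the coefficients (Proposition~\ref{cor:standard}). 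To repair your argument you would need, at minimum, a proof of the survival/cancellation claim that does not presuppose $I_B$, and a normalization step for the coefficients.
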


We obtain this result as consequence of Proposition~\ref{prop:rel-correspond-arrow}
and Proposition~\ref{cor:standard}.

Given a quiver $Q$, a subset $\Sigma$ of the set of arrows is called
an \emph{admissible cut} if $\Sigma$ contains exactly one arrow of
each chordless cycle in $Q$ which is oriented. The notion of
admissible cuts was firstly introduced in \cite{F} and \cite{FP} as
\emph{cutting sets}. The quotient obtained by deleting these arrows
is called \emph{quotient by an admissible cut}, see section
\ref{sec:prel} for a precise definition.

In this work we show that if $C$ is a cluster-tilted algebra whose
quiver $Q_C$ is cyclically oriented, then $C$ admits an admissible
cut. Even more, each arrow of $Q_C$, contained in an oriented cycle,
is also contained in an admissible cut, see
\ref{prop:adm-cut-exists}. We provide a necessary and sufficient
condition over $A$ such that $\calg{A}$ is isomorphic to a cluster
tilted algebra with a cyclically oriented quiver.  We are in a
position to state now our main result.

\begin{theorem}
\label{thm:adm-cut-clu-tilt} Let $A$ be an algebra with $\gldim
A\leq 2$, such that $\calg{A}$ is a finite dimensional algebra, and
let $C$ be a cluster-tilted algebra of any type with a cyclically oriented
quiver. Then, $\calg{A}\simeq C$ if and only if $A$ is the quotient
of $C$ by an admissible cut.
\end{theorem}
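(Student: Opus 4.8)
The plan is to prove the two implications separately; the common tool is the explicit list of minimal relations of a cluster-tilted algebra with cyclically oriented quiver furnished by Proposition~\ref{thm:cy-or-clu-tilt}, together with the existence statement for admissible cuts (\ref{prop:adm-cut-exists}). The key preliminary observation, valid for \emph{any} $A$ with $\gldim A\leq 2$ for which $\calg A$ is finite dimensional, is that $\calg A=\Tensor_A(\Ext^2(\dual A,A))=A\oplus\Ext^2(\dual A,A)\oplus(\Ext^2(\dual A,A)\otimes_A\Ext^2(\dual A,A))\oplus\cdots$ carries a grading by tensor-degree, and that killing the ideal of positive degree yields a canonical algebra surjection $\pi\colon\calg A\to A$ whose kernel is generated by the arrows of $Q_{\calg A}$ coming from a fixed basis of $\Ext^2(\dual A,A)$ (the ``new'' arrows). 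Thus $A$ is always the quotient of $\calg A$ by deleting its new arrows; the content of the theorem is that, when $\calg A\simeq C$ is cluster-tilted with cyclically oriented quiver, these new arrows form exactly an admissible cut, and conversely that every admissible cut arises this way.

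For the implication $(\Leftarrow)$ I would start from $A=C/\Sigma$ with $\Sigma$ an admissible cut. First extract a presentation of $A$: since $\Sigma$ meets every chordless oriented cycle of the cyclically oriented quiver $Q_C$ in exactly one arrow, every path antiparallel to a cut arrow uses no cut arrow, whereas every path antiparallel to a non-cut arrow lying on a cycle uses at least one cut arrow; hence $Q_A=Q_C\setminus\Sigma$ and, by Proposition~\ref{thm:cy-or-clu-tilt}, a minimal system of relations for $A$ is $\{\rho_\alpha:\alpha\in\Sigma\}$, each $\rho_\alpha$ now read inside $kQ_A$. From this I would verify $\gldim A\leq 2$ by writing down the minimal projective presentation of each simple $S_i$: degree $0$ is $P_i$, degree $1$ has the projectives at the targets of the arrows out of $i$, degree $2$ has the projectives at the sources of the relations $\rho_\alpha$, and the absence of a degree-$3$ term is exactly the ``these are all the minimal relations'' clause of Proposition~\ref{thm:cy-or-clu-tilt} applied to the relevant convex subcategories. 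Next I would compute $\Ext^2(\dual A,A)$ as an $A$-bimodule: its component from $i$ to $j$ has a basis indexed by the minimal relations of $A$ running between $i$ and $j$, i.e. by the cut arrows joining $i$ and $j$, and matching the left and right $A$-actions against composition in $C$ identifies this bimodule with $k\Sigma$ (the span of the cut arrows, with the $C$-bimodule structure restricted along $A\hookrightarrow C$). Hence $\calg A=\Tensor_A(k\Sigma)$ has quiver $Q_A\sqcup\Sigma=Q_C$, and the algebra map $kQ_C\to\Tensor_A(k\Sigma)$ fixing $Q_A$ and sending each $\alpha\in\Sigma$ to the corresponding generator is surjective and kills every $\rho_\beta$; comparing graded dimensions of $\Tensor_A(k\Sigma)$ with those of $C$ (both finite dimensional, $C$ being presented by the $\rho_\beta$) shows this map factors to an isomorphism $C\xrightarrow{\ \sim\ }\calg A$.

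For $(\Rightarrow)$ I would assume $\gldim A\leq 2$, $\calg A$ finite dimensional, and $\calg A\simeq C$. Transporting the isomorphism and identifying $Q_{\calg A}$ with $Q_C$, the new arrows form a set $\Sigma$ with $Q_A=Q_C\setminus\Sigma$, and $\pi$ exhibits $A\simeq\calg A/\langle\Sigma\rangle\simeq C/\Sigma$; it remains to show $\Sigma$ is an admissible cut of $Q_C$. For ``at most one'': if a chordless oriented cycle $D$ carried two cut arrows $\alpha,\alpha'$, then the complementary path $p_D$ of $D$ at $\alpha$ would contain $\alpha'$, so the degree-zero part $\rho_\alpha^{(0)}=\pi(\rho_\alpha)$ of the relation $\rho_\alpha$ of $C=\calg A$ (Proposition~\ref{thm:cy-or-clu-tilt}) would have lost that summand; tracking which relations of $C$ survive in $A$ against the count $\dim_k e_j\Ext^2(\dual A,A)e_i=\#\{\text{new arrows }i\to j\}$ yields a contradiction with $\#(D\cap\Sigma)\geq 2$. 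For ``at least one'': each new arrow lies on an oriented cycle of $\calg A$ and hence, by \ref{prop:adm-cut-exists}, lies on an admissible cut; using this together with the relation list of $C$ and $\gldim A\leq 2$ one rules out a chordless oriented cycle lying entirely inside $Q_A$ (such a cycle would force the cyclic path around $D$ to vanish in $A$ in a way incompatible with $Q_{\calg A}=Q_C$ being cyclically oriented). Hence $\Sigma$ is an admissible cut and $A=C/\Sigma$.

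I expect the main obstacle to be, in $(\Leftarrow)$, the bimodule identification $\Ext^2(\dual A,A)\simeq k\Sigma$ together with the reconstruction isomorphism $\Tensor_A(k\Sigma)\simeq C$: one must match not merely the quiver but the full multiplicative structure, and the control of syzygies that guarantees $\gldim A\leq 2$ and the correct truncation of the tensor algebra rests essentially on the completeness half of Proposition~\ref{thm:cy-or-clu-tilt}. The combinatorics of antiparallel paths and admissible cuts in $(\Rightarrow)$ is more delicate than routine but is driven by \ref{prop:adm-cut-exists} and the same relation description.
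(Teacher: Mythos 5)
Your overall strategy (reduce to the set $\Sigma$ of ``new'' arrows via the tensor-degree grading, then show $\Sigma$ is an admissible cut, and conversely rebuild $\calg{A}$ from $A=C/\Sigma$) is reasonable, but both halves stop exactly where the real difficulty begins, so there are genuine gaps. The most serious one is in $(\Leftarrow)$. You reduce the claim to two assertions: that $\Ext^2(\dual A,A)\simeq k\Sigma$ as an $A$-bimodule \emph{with the multiplicative structure induced from $C$}, and that the resulting surjection $C\twoheadrightarrow\Tensor_A(k\Sigma)$ is injective. Neither is established. The bimodule identification is not a count of minimal relations (that only recovers the quiver of $\ealg{A}$, the [ABS]-type statement); to get $\Tensor_A(k\Sigma)\simeq\calg{A}$ as algebras you must match the left and right $A$-actions on $\Ext^2(\dual A,A)$ against composition of paths in $C$, which is precisely what you flag as ``the main obstacle'' without resolving. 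And ``comparing graded dimensions'' is circular as stated: you only construct a surjection in one direction, and you have no independent computation of $\dim_k (k\Sigma)^{\otimes_A n}$ or of the $\Sigma$-graded pieces of $C$; you would need a surjection the other way as well (e.g.\ from homogeneity of the relations $\rho_\beta$ for the $\Sigma$-grading, so that $\Tensor_{C_0}(C_1)\twoheadrightarrow C$ with $C_0=A$) plus finite-dimensionality. The paper avoids all of this: for this implication it only shows that each cut arrow corresponds to a relation of $A$, hence $Q_{\ealg{A}}\simeq Q_C\simeq Q_{\calg{A}}$, and then invokes \cite{BIRS}, Corollary 2.4, to upgrade the quiver isomorphism to $\calg{A}\simeq C$. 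Without that external input, or a fully worked version of your tensor-algebra reconstruction, the implication is not proved. (Also, $\gldim A\leq 2$ is a hypothesis of the theorem, not something to be derived; the paper retains it as a hypothesis even in Proposition~\ref{thm:iteratedtilted}, so your sketch of a proof of it is both unnecessary and not obviously correct.)

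In $(\Rightarrow)$ your two combinatorial claims are asserted rather than proved, and the paper's actual mechanism is worth internalizing. It passes to $\ealg{A}$ via Lemma~\ref{lem:cut-iff}, notes that no arrow of $Q_{\ealg{A}}=Q_C$ can be parallel to a minimal relation of $A$ (this would create a multiple arrow or $2$-cycle), and then exploits the \emph{uniqueness} of the minimal relation antiparallel to each arrow (Proposition~\ref{prop:rel-correspond-arrow}): if $\delta_1\notin\Sigma$ then the image in $A$ of the relation antiparallel to $\delta_1$ cannot be a relation of $A$, forcing some $\delta_i\in\Sigma$; and once $\delta_t\in\Sigma$, the relation of $A$ that creates the new arrow $\delta_t$ is the unique relation of $\ealg{A}$ antiparallel to $\delta_t$ and contains $\delta_{t-1}\cdots\delta_1$ as a summand, so none of $\delta_1,\ldots,\delta_{t-1}$ lies in $\Sigma$. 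Your ``tracking which relations survive against $\dim_k e_j\Ext^2(\dual A,A)e_i$'' and ``ruling out a chordless oriented cycle inside $Q_A$'' gesture at this but would need to be turned into such an argument to count as a proof.
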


This result is shown in Section 4.

We consider the following question for a given quiver $Q$  and $T$ a
cluster-tilting object in $\Clu_{k Q}$: is it true for an algebra $A$ with
$\gldim A\leq 2$ that $\End_{\Clu_A}(A)\simeq
\End_{\Clu_{k Q}}(T)$ implies that $A$ is
 derived equivalent to $kQ$? This question has a negative answer as
 shows the following example.

\begin{example}
\label{ex:3}
  Let $B=k Q_B/I_B$ (resp. $C=k Q_C/I_C$) be the quotient of the path
  algebra of the quiver $Q_B$ (resp. $Q_C$) as shown in the following
  picture on the left (resp. right) and $I_B=\gen{\gamma\varphi}$
  (resp. $I_C=\gen{\varphi\eta}$).
  \begin{center}
    \begin{picture}(140,68)
      \put(0,8){
        \put(-20,50){$Q_B:$}
        \multiput(0,0)(40,0){2}{\circle*{3}}
        \multiput(0,40)(40,0){2}{\circle*{3}}
        \put(4,0){\vector(1,0){32}}
        \put(0,4){\vector(0,1){32}}
        \put(36,40){\vector(-1,0){32}}
        \put(3,37){\vector(1,-1){34}}
        \put(20,-4){\HTCenter{\small $\alpha$}}
        \put(-4,20){\RVCenter{\small $\beta$}}
        \put(22,22){\small $\gamma$}
        \put(20,44){\HBCenter{\small $\varphi$}}
      }
      \put(100,8){
        \put(-20,50){$Q_C:$}
        \multiput(0,0)(40,0){2}{\circle*{3}}
        \multiput(0,40)(40,0){2}{\circle*{3}}
        \put(4,0){\vector(1,0){32}}
        \put(0,4){\vector(0,1){32}}
        \put(36,40){\vector(-1,0){32}}
        \put(40,4){\vector(0,1){32}}
        \put(20,-4){\HTCenter{\small $\alpha$}}
        \put(-4,20){\RVCenter{\small $\beta$}}
        \put(20,44){\HBCenter{\small $\varphi$}}
        \put(44,20){\LVCenter{\small $\eta$}}
      }
    \end{picture}
  \end{center}
  Then $\Clu_C\simeq \Clu_B\simeq \Clu_{k \tilde{A}_{3,1}}$ but
  $\Der(B)\not\simeq \Der(C)$.
\end{example}

In the following, we want to show that there are interesting classes
of hereditary algebras $H= kQ$ for which the answer is always
affirmative.  In the case that $C$ is a cluster-tilted algebra of
Dynkin or extended Dynkin type $\Delta$ with a cyclically
oriented quiver, we show that if the algebra $\calg{A}$ is isomorphic to
$C$ then $A$ must necessarily be derived equivalent to a hereditary
algebra of type $\Delta$. More than that, we have the following
statement which provides a necessary and sufficient condition.

\begin{theoremunotres}
Let $A$ be a finite-dimensional algebra with $\gldim
  A\leq 2$, such that $\calg{A}$ has a cyclically oriented quiver.
  Then, $\calg{A}$ is cluster-tilted of Dynkin or extended Dynkin
  type $\Delta$ if and only if $A$ is derived equivalent to a hereditary algebra
  $H$ of Dynkin or extended Dynkin type $\Delta$.
\end{theoremunotres}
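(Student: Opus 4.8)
The plan is to prove the two implications separately; the ``if'' implication is essentially a matter of definitions, and all the content lies in the ``only if'' implication. For ``if'', suppose $A$ is derived equivalent to a hereditary algebra $H$ of Dynkin or extended Dynkin type $\Delta$. Then $\calg A$ is, by the very definition recalled in the introduction, a cluster-tilted algebra of type $\Delta$, and it is finite dimensional because $\Clu_H$ is Hom-finite when $\Delta$ is Dynkin or extended Dynkin. (The underlying structural fact is that a triangle equivalence $\Der(A)\to\Der(H)$ commutes with the suspension and with the Serre functor, hence with $\tau$ and with the autoequivalence $\tau^{-1}S$ defining the orbit category, so it descends to the orbit categories and, the triangulated hull being functorial, to a triangle equivalence $\Clu_A\simeq\Clu_H$ carrying $A$ to a basic cluster-tilting object $T$, whence $\calg A=\End_{\Clu_A}(A)\simeq\End_{\Clu_H}(T)$ is cluster-tilted of type $\Delta$.)

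For ``only if'', assume $\calg A\cong C$ with $C$ cluster-tilted of Dynkin or extended Dynkin type $\Delta$ and $Q_C$ cyclically oriented. Since $C=\calg A$ is finite dimensional and $Q_C$ is cyclically oriented, Theorem~\ref{thm:adm-cut-clu-tilt} applies and tells us that, for this fixed $C$, the algebras $A'$ with $\gldim A'\le 2$ and $\calg{A'}\cong C$ are exactly the quotients $C/\gen{\Sigma}$ by admissible cuts $\Sigma$ of $Q_C$; in particular $A\cong C/\gen{\Sigma}$ for one such $\Sigma$. Moreover one of these quotients is already known to be derived equivalent to a hereditary algebra of type $\Delta$: since $C$ is the relation extension $B\ltimes\Ext^2_B(\dual B,B)$ of a tilted algebra $B$ of type $\Delta$ (Assem, Br\"{u}stle and Schiffler), the algebra $B$ has global dimension at most two, is derived equivalent to $k\Delta$, satisfies $\calg B\cong C$, and hence equals $C/\gen{\Sigma_0}$ for an admissible cut $\Sigma_0$. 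Thus the theorem is reduced to the internal statement: for a cluster-tilted algebra $C$ of Dynkin or extended Dynkin type with cyclically oriented quiver, all the quotients $C/\gen{\Sigma}$ by admissible cuts are derived equivalent to one another.

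To prove this internal statement I would use the explicit relations of $C$ from Proposition~\ref{thm:cy-or-clu-tilt} (to each arrow $\beta$ lying on an oriented cycle is attached the relation $\rho_\beta$, the sum of all paths antiparallel to $\beta$) together with Proposition~\ref{prop:adm-cut-exists}. The idea is to connect any two admissible cuts by a chain of elementary moves, each one shifting the set of cut arrows by one step around the oriented chordless cycles while remaining an admissible cut, and to realize every such move by an explicit two-term tilting complex over $C/\gen{\Sigma}$ whose endomorphism algebra is the next quotient $C/\gen{\Sigma'}$, so that each move induces a derived equivalence. The delicate points are that a cut arrow may lie on several chordless cycles at once, so the moves are not bare single-arrow swaps, and that one must verify both the tilting computation and the connectivity of the set of admissible cuts under these moves; the cyclic orientation of $Q_C$ is precisely what makes these tractable. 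A more economical alternative is to appeal to the classification of cluster-tilted algebras of Dynkin type (\cite{BMR2}) and of extended Dynkin type, together with the known classifications of the iterated tilted algebras of those types, and to verify directly that every $C/\gen{\Sigma}$ occurs among the latter, hence is derived equivalent to a hereditary algebra of the same type.

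The main obstacle is exactly this internal statement, and it is where both hypotheses enter essentially: the cyclic orientation of $Q_C$ pins down the chordless cycles and the form of the relations, while the Dynkin or extended Dynkin restriction keeps the cluster category of \cite{Amiot} small enough (equivalently, keeps all the relevant cluster-tilting objects reachable) for the argument to be run uniformly. That neither hypothesis can be omitted is illustrated by Example~\ref{ex:3}, where one of $B$, $C$ has endomorphism algebra $\calg{}$ cluster-tilted of extended Dynkin type yet is not derived equivalent to the corresponding hereditary algebra, the cluster-tilted algebra occurring there not having a cyclically oriented quiver.
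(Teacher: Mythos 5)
Your reduction of the ``only if'' direction is the same as the paper's up to a point: both you and the authors invoke Theorem~\ref{thm:implicaciones} to conclude that $A$ is a quotient of $C=\calg{A}$ by an admissible cut, and both reduce the problem to showing that such a quotient is derived equivalent to $k\Delta$. But at exactly that point your argument stops being a proof. You reformulate the remaining task as the ``internal statement'' that all admissible-cut quotients of $C$ are derived equivalent to one another (anchored by the Assem--Br\"ustle--Schiffler tilted algebra $B$), and then offer two strategies --- a chain of elementary moves on cuts realized by two-term tilting complexes, or an appeal to classifications of iterated tilted algebras --- while yourself flagging that the connectivity of the set of admissible cuts under your moves and the tilting computations are unverified. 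Those are precisely the hard content, and neither is supplied; the connectivity of admissible cuts under single-arrow-type moves is not at all obvious (a cut arrow can lie on several chordless cycles, as you note), and the classification route is not spelled out for the extended Dynkin case. So there is a genuine gap: the key step is named but not proved.

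The paper closes this gap by a quite different mechanism, namely quadratic forms. Proposition~\ref{thm:iteratedtilted} shows that any admissible-cut quotient $A$ of $C$ is strongly simply connected (Proposition~\ref{prop:cut-is-ssc}, which rests on Propositions~\ref{prop:cut-directed} and~\ref{prop:rel-correspond-arrow}), and that its Euler form is equivalent to a quasi-Cartan companion of the cluster quiver, which is positive definite in the Dynkin case by \cite{BGZ} and positive semi-definite of corank one in the extended Dynkin case by \cite{Seven} (the identification of the Euler form follows the proof of \cite[Prop.~4.19]{BFPPT}). The conclusion then follows from the derived-equivalence classification theorems of Assem--Skowro\'nski \cite{AS} (positive definite, strongly simply connected) and Barot--de la Pe\~na \cite{BdP} (positive semi-definite, strongly simply connected). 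This avoids any need to compare different cuts with one another. If you want to salvage your approach you would have to either carry out the mutation-of-cuts program in full or substitute the quadratic-form argument; as written, the proof is incomplete. (Your ``if'' direction is fine and matches the paper's use of \cite[Thm.~1.1]{BFPPT}; your reading of Example~\ref{ex:3} as showing the necessity of the cyclically oriented hypothesis is also essentially correct, since the quiver $Q_C$ there contains a non-oriented chordless $4$-cycle.)
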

\stepcounter{theorem}

A closely related problem was considered by  C.~Amiot and S.~Oppermann
in  \cite{AO,AO2}. They studied when two algebras of global dimension 2 give rise to the same cluster category, and under which assumptions they become derived equivalent algebras. The authors give an answer to this question in terms of Galois coverings.

\section{Preliminary results}
\label{sec:prel}
For $A$ an algebra, let $\ealg{A} = A\ltimes \Ext^2_A(\dual A,A)$ be the \emph{relation extension} of $A$, see also \cite{ABS}.

\begin{lemma}
  \label{lem:D1}
  Let $A$ be a finite-dimensional algebra with $\gldim A\leq 2$ and
  denote $\calg{A}=\End_{\Clu_A}(A)$.
Then, there is a sequence of algebra homomorphisms
  $$
  A\xrightarrow{\iota}\calg{A}\xrightarrow{\pi}
  \ealg{A}\xrightarrow{\mu} A
  $$
  whose composition is the identity.  If $\calg{A}$ is a finite-dimensional algebra,
  then $\calg{A}$ is a split extension of $A$.

\end{lemma}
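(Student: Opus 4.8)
The plan is to prove Lemma~\ref{lem:D1} by constructing the three maps explicitly and then verifying the composition claim followed by the split-extension claim.

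First I would recall that Amiot's description gives $\calg{A} = \Tensor_A(E)$ where $E = \Ext^2_A(\dual A, A)$, which as an $A$-bimodule is concentrated in degrees $\geq 0$ with degree-zero part $A$ itself; this yields the canonical algebra inclusion $\iota\colon A \hookrightarrow \calg{A}$ as the degree-zero component. Next I would define $\pi\colon \calg{A} \to \ealg{A}$. Since $\ealg{A} = A \ltimes E$ is the trivial extension, its multiplication kills products of two elements of $E$, so $\ealg{A}$ is the quotient of the tensor algebra $\Tensor_A(E)$ by the ideal generated by $E \otimes_A E$ (all tensors of length $\geq 2$). Hence there is a canonical surjective algebra homomorphism $\pi\colon \calg{A} = \Tensor_A(E) \twoheadrightarrow A \ltimes E = \ealg{A}$. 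Finally, $\mu\colon \ealg{A} \to A$ is the canonical projection of the split (trivial) extension $A \ltimes E$ onto $A$, killing the ideal $E$; this is manifestly an algebra homomorphism.

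With the maps in hand, the composition $\mu\pi\iota$ sends $a \in A$ to itself: $\iota$ includes $A$ as degree zero, $\pi$ restricts to the identity on the degree-zero part (both sides have $A$ as their degree-zero component and $\pi$ is the identity there), and $\mu$ restricts to the identity on $A \subset A\ltimes E$. So $\mu\pi\iota = \mathrm{id}_A$, as claimed. For the second assertion, suppose $\calg{A}$ is finite-dimensional. Then I would argue that the ideal $\ker(\mu\pi) \subseteq \calg{A}$ — which is the ideal generated by $E$ inside $\Tensor_A(E)$, i.e.\ all tensors of length $\geq 1$ — is a nilpotent (in fact nilpotent because $\calg{A}$ is finite-dimensional and this ideal is generated by the positively-graded part, so some power vanishes) two-sided ideal, and $\iota$ provides a section of the quotient map $\calg{A} \to \calg{A}/\ker(\mu\pi) \cong A$. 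Thus $\calg{A}$ is a split extension of $A$ by the nilpotent ideal $\ker(\mu\pi)$, which is precisely the definition of a split extension.

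The main obstacle I anticipate is being careful about what ``$\calg{A}$ finite-dimensional'' buys us: in general $\Tensor_A(E)$ can be infinite-dimensional, and one must check that finite-dimensionality forces the positively-graded ideal to be nilpotent (equivalently, that $E^{\otimes n} = 0$ for $n \gg 0$), so that the extension genuinely splits in the module-theoretic sense (kernel contained in the radical). One should also double-check the identification of $\pi$'s kernel with $\bigoplus_{n\geq 2} E^{\otimes_A n}$ and that this is compatible with the $A$-bimodule structure coming from Amiot's construction, rather than some twisted version; but since Amiot's identification $\End_{\Clu_A}(A)\cong\Tensor_A(E)$ is stated in the excerpt as a ring isomorphism, this compatibility is available to us. Everything else is formal manipulation of graded tensor algebras and trivial extensions.
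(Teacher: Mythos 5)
Your proposal is correct and follows essentially the same route as the paper: everything rests on Amiot's identification $\calg{A}\simeq\Tensor_A(\Ext^2(\dual A,A))$, with $\pi$ and $\mu$ the canonical quotients and $\iota$ the degree-zero inclusion (the paper phrases $\iota$ via the embedding $\End_{\Der(A)}(A)\hookrightarrow\End_{\Clu_A}(A)$ through the orbit category, which is the same map under that identification). You are in fact more careful than the paper about why finite-dimensionality makes the positively graded ideal nilpotent, which is a welcome addition rather than a divergence.
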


\begin{proof}
  Observe first that $A\simeq \End_{\Der(A)}(A)$ is naturally embedded
  in $\End_{\Phi}(A)$, where $\Phi=\Der(A)/F$ is the orbit
  category. Since the embedding of $\Phi$ in $\Clu_A$ is fully
  faithful, we have $\End_{\Phi}(A)\simeq \End_{\Clu_A}(A)$. This
  defines the homomorphism $\iota$. Since $C=\End_{\Clu_A}(A)$ is
  naturally isomorphic to the tensor algebra $\Tensor_A(\Ext^2(\dual
  A,A))$, we have that $\ealg{A}$ is a natural quotient of $C$. And
  hence so is $A$ of $\ealg{A}$.
\end{proof}

We recall the following definition from \cite{BFPPT}.

\begin{definition}[Quotient by an admissible cut]
  Let $C=k Q_C/I$ be an algebra given by a quiver $Q_C$ and an
  admissible ideal $I$. A \emph{quotient of $C$ by an admissible cut}
  (or \emph{an admissible cut of $C$)} is an algebra of the form
  $k Q_C/\gen{I\cup \Delta}$ where $\Delta$ is an admissible cut
  of $Q_C$.
\end{definition}

\begin{lemma}
  \label{lem:cut-iff}
  Let $A$ be a finite-dimensional algebra with $\gldim A\leq 2$ and such that $\calg{A}$ is finite-dimensional.
  Then $A$ is the quotient of $\ealg{A}$ by an admissible cut if and only if $A$ is the quotient of $\calg{A}$ by
  an admissible cut.
\end{lemma}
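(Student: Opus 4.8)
The idea is to transport the admissible-cut structure across the algebra homomorphisms produced by Lemma~\ref{lem:D1}. By that lemma, when $\calg{A}$ is finite-dimensional we have surjections $\calg{A}\xrightarrow{\pi}\ealg{A}\xrightarrow{\mu} A$, and both $\calg{A}$ and $\ealg{A}$ are split extensions of $A$ by (a tensor-algebra version of, resp.\ the bimodule) $\Ext^2_A(\dual A,A)$. The first step is to compare the quivers $Q_C$ of $C=\calg{A}$ and $Q_{\ealg{A}}$ of the relation extension: since $\ealg{A}=A\ltimes\Ext^2_A(\dual A,A)$ and $\calg{A}=\Tensor_A(\Ext^2_A(\dual A,A))$ have the \emph{same} degree-zero and degree-one parts (both add, to the quiver $Q_A$ of $A$, exactly one new arrow for each element of a basis of $\Ext^2_A(\dual A,A)$ lying between the relevant vertices), the quivers $Q_C$ and $Q_{\ealg{A}}$ coincide, and $\pi$ is the identity on arrows. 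In particular the "new" arrows (those not coming from $Q_A$) are the same set $E$ for both algebras, and $\ealg{A}=\calg{A}/J$ where $J$ is generated by paths of length $\geq 2$ in these new arrows.

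The second step is the concrete translation. Suppose first that $A=\ealg{A}/\gen{\Delta}$ for an admissible cut $\Delta$ of $Q_{\ealg{A}}$. Because $\mu\colon\ealg{A}\to A$ kills exactly the arrows in $E$ (the split-extension structure identifies $A$ with the degree-zero quotient) and an admissible cut $\Delta$ consists of arrows, the statement "$A$ is a quotient of $\ealg{A}$ by an admissible cut" should be equivalent to saying that $E$ itself \emph{is} an admissible cut of $Q_{\ealg{A}}=Q_C$: one arrow from each oriented chordless cycle. Granting this, the same set $E$ of arrows is an admissible cut of $Q_C$, and I must check that $C/\gen{E}\simeq A$; but $C/\gen{E}$ kills all new arrows, hence collapses the tensor algebra to its degree-zero part, which is $A$. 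Conversely, if $A=\calg{A}/\gen{\Delta'}$ for an admissible cut $\Delta'$ of $Q_C$, then pushing $\Delta'$ along $\pi$ gives the same set of arrows in $Q_{\ealg{A}}$, and because $\pi$ only adds relations (it is a quotient map), $\Delta'$ remains an admissible cut there with $\ealg{A}/\gen{\Delta'}$ a quotient of $C/\gen{\Delta'}\simeq A$; a dimension/surjectivity count, using that both are split extensions of $A$ by the relevant data, forces equality.

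The main obstacle I anticipate is the bookkeeping that makes "$A$ is the quotient of $\ealg{A}$ (or $\calg{A}$) by an admissible cut" genuinely equivalent to "the new arrows $E$ form an admissible cut": one has to argue that any admissible cut of $Q_C$ that yields $A$ must consist precisely of new arrows and must hit every oriented chordless cycle, which uses the split-extension description (no relation of $A$ can involve a new arrow, since $A$ is the degree-zero part) together with the fact that every oriented chordless cycle of $Q_C$ passes through exactly one new arrow — this last point is where the hypothesis on $\gldim A\leq 2$ and the structure of $\Ext^2_A(\dual A,A)$ enter, and it is essentially the content that also underlies Proposition~\ref{prop:adm-cut-exists}. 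Once that combinatorial identification is in place, the equivalence is immediate because both $\calg{A}$ and $\ealg{A}$ are modified only in the new arrows and killing all of them returns $A$ in either case.
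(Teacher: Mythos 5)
Your overall route is the paper's: both $\ealg{A}$ and $\calg{A}$ are split extensions of $A$ (Lemma~\ref{lem:D1}), they have the same quiver by Amiot's result, so the set of arrows to be removed is the same set $E$ of ``new'' arrows in both cases, and killing $E$ returns $A$. The paper carries out this last step as an explicit comparison of ideals (showing the kernel of $kQ_B\to A$ equals $\langle I_B\cup\Delta\rangle$ by induction on the summands of a relation, using $\mu\pi\iota=\mathrm{id}_A$), whereas you appeal to the graded structure of $\Tensor_A(M)$ and $A\ltimes M$ for $M=\Ext^2_A(\dual A,A)$; that part is acceptable, modulo choosing a presentation compatible with the grading.

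The genuine problem is your final paragraph. You reduce the lemma to the claim that each side is equivalent to ``$E$ is an admissible cut of $Q_C$,'' and to establish the harder direction you invoke the ``fact'' that every oriented chordless cycle of $Q_C$ passes through exactly one new arrow, attributing it to $\gldim A\leq 2$ and to Proposition~\ref{prop:adm-cut-exists}. This fact is false in general: if it held for every $A$ of global dimension at most two, the lemma would be unconditional ($A$ would \emph{always} be a quotient of $\calg{A}$ by an admissible cut), and Theorem~\ref{thm:implicaciones}, which characterizes exactly when this happens, would be vacuous. Proposition~\ref{prop:adm-cut-exists} asserts something different (every arrow on an oriented cycle of a \emph{cyclically oriented} quiver lies in some admissible cut) and is not available here anyway, since $Q_C$ is not assumed cyclically oriented in this lemma. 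The step is also circular: ``$E$ meets every oriented chordless cycle exactly once'' is precisely the statement ``$E$ is an admissible cut'' that you are trying to derive. The repair is that you should not prove this at all: the hypothesis already hands you an admissible cut $\Delta$ with $B/\langle I_B\cup\Delta\rangle\simeq A$; comparing the quiver of this quotient with $Q_A=Q_B\setminus E$ identifies $\Delta$ with $E$, so the admissibility of $E$ is \emph{given}, not deduced, and all that remains is the equality of ideals on the other side, which your graded argument (or the paper's computation with $\iota$, $\pi$, $\mu$) supplies.
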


\begin{proof}
  The proof is done similarly as in \cite[Remark 4.15]{BFPPT}. The proof of both implications is done simultaneously. Suppose
  that $A$ is an admissible cut of $B=\ealg{A}$ (resp. of $B=\calg{A}$). Then by
  Lemma~\ref{lem:D1} the algebras $\ealg{A}$ and $\calg{A}$ are both split
  extensions of $A$ and by \cite[Proposition 4.16]{Amiot} they have the same
  quiver. Hence the quiver $Q_{A}$ is obtained from the quiver
  $Q_B$ by some admissible cut $\Delta$.

  Let $J$ be the ideal of $k Q_{B}$ such that $A=(k
  Q_{B})/J$. Clearly, we have $I_B\cup\Delta\subseteq
  J$  and hence $\gen{I_B\cup \Delta}\subseteq J$. We now
  prove that also the converse contention holds. Let $\rho$ be a
  relation of $k Q_B$  which does belong to $J$. Write
  $\rho=\sum_{i=1}^t \lambda_i\rho_i$ for some non-zero scalars
  $\lambda_1,\ldots,\lambda_t$ and some parallel paths
  $\rho_i=\rho_{i,N_i}\cdots\rho_{i,1}$ for $i=1,\ldots,t$. If
  $\rho_{i,j}$ belongs to $\Delta$ we have that
  $\rho'=\rho-\lambda_i\rho_i$ belongs to $J$ and hence inductively
  over the number of summands $t$ we obtain that $\rho'$ (and hence
  also $\rho$) belongs to $\gen{I_B\cup \Delta}$. So it
  remains to consider the case where no path $\rho_i$ contains an
  arrow of $\Delta$.

  Then $\rho$ can be considered as an element $k Q_A$. Let $\pi$ and
  $\mu$ be the canonical maps as mentioned in Lemma~\ref{lem:D1}. Then, in case we supposed that $A$ is the quotient of $\calg{A}$ by an admissible cut we argue as follows: we have
  $\overline{\rho}=\pi{\overline{\rho}}$, where
  $\overline{\rho}$ denotes both the class of $\rho$ in the quotient
  $k Q_{\calg{A}}/I_{\calg{A}}$ and $k Q_A/I_A$. Then
  $0=\mu(\overline{\rho})=\mu\pi(\overline{\rho})=\overline{\rho}$
  shows that indeed $\rho$ belongs to $I_{\calg{A}}$. This shows that
  $A$ is an admissible cut of $\calg{A}$.

  In case we supposed that $A$ is the quotient of $\ealg{A}$ by an admissible cut then $\mu(\rho)=0$ shows that $\rho=\mu\pi\iota(\rho)=0$ showing that $\rho\in I_{A}\subseteq I_{\calg{A}}$. Again, we have that $A$ is the quotient of $\calg{A}$.
\end{proof}

\section{Cyclically oriented quivers}
\label{sec:cyc-or}

\subsection{Shortest paths and chordless cycles}

We recall from \cite{BGZ} the following definitions.

\begin{definition}
A \emph{walk of length $p$} in a quiver $Q$ is a $(2p+1)$-tuple
$$
w=(x_p,\alpha_p,x_{p-1},\alpha_{p-1},\ldots,x_1,\alpha_1,x_0)
$$
such that for all $i$ we have $x_i\in Q_0$, $\alpha\in Q_1$ and $\{s(\alpha_i),e(\alpha_i)\}=\{x_p,x_{p-1}\}$.
The walk $w$ is \emph{oriented} if either $s(\alpha_i)=x_{p-1}$ and $e(\alpha_i)=x_p$ for all $i$ or $s(\alpha_i)=x_{p}$ and $e(\alpha_i)=x_{p-1}$ for all $i$.
Furthermore, $w$ is called a \emph{cycle} if $x_0=x_p$. A cycle of length $1$ is called a \emph{loop}.
We often omit the vertices and abbreviate $w$ by $\alpha_p\cdots\alpha_1$. An oriented walk is also called \emph{path}.

A cycle $c=(x_p,\alpha_p,\ldots,x_1,\alpha_1,x_p)$ is called \emph{non-intersecting} if its vertices $x_1,\ldots,x_p$ are pairwise distinct.
A non-intersecting cycle of length $2$ is called $2$-cycle.
If $c$ is a non-intersecting cycle then any arrow $\beta\in Q\setminus\{\alpha_1,\ldots,\alpha_p\}$ with $\{s(\beta),e(\beta)\}\subseteq\{x_1,\ldots,x_p\}$ is called a \emph{chord} of $c$. A cycle $c$ is called \emph{chordless} if it is non-intersecting and there is no chord of $c$.

A quiver $Q$ without loop and $2$-cycle is call \emph{cyclically oriented} if each chordless cycle is
oriented. Note that this implies that there are no multiple arrows in $Q$. A quiver without oriented cycle is called \emph{acyclic} and an algebra whose quiver is acyclic is called \emph{triangular}.
\end{definition}

\begin{remark}
\label{rem:C_n}
The easiest cyclically oriented quiver is clearly a single oriented cycle.
We denote by $C_n$ the cyclically oriented cycle with $n$ vertices.
Observe that for each $n$ there exists a cluster-tilted algebra $A$ having a
quiver isomorphic to $C_n$, namely the algebra $k C_n/\rad^{n-1}$ where
$\rad^{n-1}$ is the ideal generated by all paths of length $n-1$. By \cite{BMR2}, there exists, up to isomorphism only one cluster-tilted
algebra with predefined quiver and hence we always must have all
compositions of $n-1$ arrows to be the minimal relations of $A$ whenever
$A$ is cluster-tilted and $Q_A=C_n$.
\end{remark}

\begin{definition}
  A path $\gamma$ which is antiparallel to an arrow $\eta$ in a quiver $Q$ is a \emph{shortest path} if
the full subquiver generated by the induced oriented cycle $\eta\gamma$ is chordless.
A path $\gamma=(x_0\xrightarrow{\gamma_1} x_1 \xrightarrow x_2 \rightarrow \cdots
   \rightarrow x_L)$ is called \emph{shortest directed path} if there exists no arrow $x_i\rightarrow x_j$ in $Q$ with $1\leq i+1<j\leq L$.
   A walk $\gamma=(x_0\myjoin x_1 \myjoin x_2 \myjoin \cdots
   \myjoin x_L)$ is called a \emph{shortest walk} if there is no edge joining $x_i$ with $x_j$ with $1\leq i+1<j\leq L$ and $(i,j)\neq(0,L)$ (we write a horizontal line to
indicate an arrow oriented in one of the two possible ways).
\end{definition}

\begin{lemma}
  \label{lem:shortest-path}
  Let $Q$ be a cyclically oriented quiver.
   Suppose that $\gamma$
    is a shortest directed path in a quiver $Q$ which is antiparallel to some arrow $\eta$.
    Then $\eta\gamma$ is a chordless cycle. Conversely if
    $\eta\gamma$ is an oriented chordless cycle, then $\gamma$ is a
    shortest directed path.
\end{lemma}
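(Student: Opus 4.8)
The plan is to prove both directions by comparing the full subquiver on the vertices of the cycle $\eta\gamma$ with the hypothesis that $Q$ is cyclically oriented. Write $\gamma=(x_0\xrightarrow{\gamma_1}x_1\to\cdots\to x_{L-1}\xrightarrow{\gamma_L}x_L)$ with $\eta\colon x_L\to x_0$, so that $\eta\gamma$ is a non-intersecting oriented cycle (the vertices $x_0,\dots,x_{L-1}$ are pairwise distinct because $\gamma$ is a shortest directed path: a repeated vertex would force a backward arrow $x_i\to x_j$ with $j\le i$, contradicting shortestness — actually one must argue this carefully, since shortestness as stated only forbids arrows $x_i\to x_j$ with $1\le i+1<j\le L$, so the non-intersecting property needs a separate short argument). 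The goal is then to show this cycle has no chord.

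\medskip

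For the first implication, suppose $\gamma$ is a shortest directed path antiparallel to $\eta$ and suppose for contradiction that $\eta\gamma$ has a chord, i.e.\ an arrow $\beta$ between two of the vertices $x_i,x_j$ (indices distinct, $\{i,j\}\neq\{0,L\}$, and $\beta$ not one of $\gamma_1,\dots,\gamma_L,\eta$). The definition of shortest directed path directly rules out arrows $x_i\to x_j$ with $i+1<j$. The remaining possibilities are arrows of the form $x_j\to x_i$ with $i<j$ (a ``backward'' chord), or arrows involving $x_L$ other than $\eta$, or arrows $x_{i}\to x_{i+1}$ parallel to $\gamma_{i+1}$ — but $Q$ has no multiple arrows (it is cyclically oriented), so this last case is impossible. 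A backward chord $x_j\to x_i$ together with the sub-path $x_i\to\cdots\to x_j$ of $\gamma$ forms a shorter oriented cycle; since $Q$ is cyclically oriented and this cycle need not itself be chordless, I would instead argue by induction on $L$ or pass to a minimal such backward chord to obtain a chordless oriented cycle, then note that replacing the segment $x_i\to\cdots\to x_j$ of $\gamma$ by the single arrow $x_j\to x_i$ is not possible (wrong orientation) — rather, the existence of such a chord contradicts minimality of $\gamma$ among directed paths antiparallel to $\eta$, or more precisely one uses it to shorten $\gamma$ to a directed path antiparallel to $\eta$ that still fails the shortest-directed-path condition only via $\beta$, iterating until no backward chord remains, at which point $\eta\gamma$ is chordless. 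Handling the chords incident to $x_L$ or $x_0$ is symmetric, using that $\eta$ is the unique arrow between $x_L$ and $x_0$.

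\medskip

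For the converse, suppose $\eta\gamma$ is an oriented chordless cycle; I must show $\gamma$ is a shortest directed path, i.e.\ there is no arrow $x_i\to x_j$ with $1\le i+1<j\le L$. Such an arrow would be an arrow between two vertices of the non-intersecting cycle $\eta\gamma$, distinct from all $\gamma_k$ and from $\eta$ (it cannot be $\gamma_k$ since those go $x_{k-1}\to x_k$ and here $i+1<j$; it cannot be $\eta$ since $\eta$ joins $x_L$ and $x_0$ and $\{i,j\}\neq\{0,L\}$ as $j\le L$ and $i\ge1$ force $i+1<j$ hence not the pair $\{0,L\}$ unless $i=0$, which is excluded by $i\ge1$). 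Hence it is a chord, contradicting chordlessness. This direction is the easy one.

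\medskip

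The main obstacle I expect is the first implication, specifically organizing the case analysis for chords and, within it, turning ``backward'' chords into a genuine contradiction with the cyclically-oriented hypothesis: a backward chord produces a short oriented cycle but not obviously a chordless one, so one needs either an induction on the length $L$ (shortening $\gamma$ and invoking the lemma for smaller $L$) or a minimality argument selecting a chord that cuts off a chordless oriented cycle, and then reconciling that with the orientation of $\gamma$. I would also need to be careful that the vertices of $\eta\gamma$ are genuinely pairwise distinct before even speaking of chords; this non-self-intersection should follow from shortestness by a similar backward-arrow argument, but it must be stated explicitly.
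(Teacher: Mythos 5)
Your converse direction is fine and is exactly the paper's one-line argument. The genuine gap is in your treatment of backward chords in the forward direction. You correctly observe that the definition of shortest directed path eliminates all forward chords $x_i\to x_j$ with $i+1<j$, so only backward chords $x_r\to x_s$ with $s<r$ remain, and you correctly notice that such a chord closes up an \emph{oriented} cycle with the segment $x_s\to\cdots\to x_r$ of $\gamma$ --- which is not forbidden by anything, since the hypothesis only forces \emph{chordless} cycles to be oriented. But the escape routes you then propose do not work: there is no ``minimality of $\gamma$ among directed paths antiparallel to $\eta$'' to contradict (a shortest directed path is defined by the absence of forward arrows $x_i\to x_j$, $i+1<j$, not by having minimal length), and a backward chord cannot be used to ``shorten $\gamma$'' at all, precisely because, as you yourself note, it points the wrong way. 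So as written the first implication never actually reaches a contradiction with the cyclically oriented hypothesis.

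The missing idea is that the contradiction must come from exhibiting a \emph{non-oriented chordless} cycle. A backward chord $x_r\to x_s$ produces a non-oriented cycle once you combine it with $\eta$ and the two outer segments of $\gamma$: the cycle $x_0\to\cdots\to x_s\leftarrow x_r\to\cdots\to x_L\xrightarrow{\ \eta\ }x_0$ has a sink at $x_s$ and a source at $x_r$, hence is non-oriented. If this cycle has a chord, that chord is again backward (forward chords are still excluded by the shortest-directed-path hypothesis), so one repeats the construction on a strictly smaller vertex set; the paper organizes this descent by choosing $r$ maximal and then $s$ minimal. After finitely many steps one obtains a non-oriented chordless cycle containing $\eta$, contradicting the assumption that $Q$ is cyclically oriented. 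Your side remark that the vertices of $\gamma$ should be checked to be pairwise distinct before one can speak of chords is a legitimate, though minor, point that the paper also leaves implicit; it does deserve a sentence, but it is not the substantive obstacle here.
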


\begin{proof}
Let
  $$\gamma=(x_0\xrightarrow{\gamma_1} x_1 \xrightarrow{\gamma_2} x_2 \rightarrow \cdots
   \rightarrow x_{L-1}\xrightarrow{\gamma_L} x_L)$$
   and denote by $Q'$ the full subquiver of $Q$ given by the vertices of
   $x_0,\ldots,x_L$.
   Suppose now that there exists a chord in the cycle $\eta\gamma$.
   Let $r$ be maximal with $1<r\leq L$ such that there exists a chord
  $x_r\rightarrow x_s$ in $Q'$ with $s<r$. Choose such a chord
  such that $s$  is minimal. Then look at the full subquiver $S^{(1)}$ given
  by the vertices $x_1,\ldots,x_s,x_r\ldots,x_L$. Now there is no arrow connecting a vertex $v_i$
  with $i\geq r$ with a vertex $v_j$ with $j\neq i\pm 1$.
  Thus if there exists an arrow $x_i\rightarrow x_j$ in $S^{(1)}$ for some $j<i$ then $i\leq s$. Take $i$ maximal and then $j$ minimal and look at the subquiver$S^{(2)}$ given by the vertices
  $x_1,\ldots,x_j,x_i,\ldots,x_s,x_r\ldots,x_L$. Inductively we obtain a non-oriented walk $\delta$ which forms a non-oriented chordless cycle with $\eta$, in contradiction to the hypothesis on $Q$.

 The converse statement follows immediately from the definitions.
\end{proof}

\begin{proposition}
\label{prop:cyc-or-quiv} Let $Q$ be a cyclically oriented quiver.
Then for any arrow $\eta$, which belongs to an oriented cycle, each
two distinct shortest paths antiparallel to $\eta$ share only the
starting vertex and the end vertex. Hence the diagram of all cycles
containing $\eta$ looks as follows.
     \begin{equation}\label{eq:diag-antiparallel}
       \begin{picture}(143,70)
         \put(0,0){
            \put(0,15){
                \multiput(30,0)(20,0){2}{\circle*{3}}
                \multiput(93,0)(20,0){2}{\circle*{3}}
             \multiput(34,0)(63,0){2}{\vector(1,0){12}}
             \put(54,0){\line(1,0){8}}
             \multiput(66,0)(5,0){3}{\line(1,0){1}}
             \put(81,0){\vector(1,0){8}}
           }
           \put(0,40){
                \multiput(30,0)(20,0){2}{\circle*{3}}
                \multiput(93,0)(20,0){2}{\circle*{3}}
             \multiput(34,0)(63,0){2}{\vector(1,0){12}}
             \put(54,0){\line(1,0){8}}
             \multiput(66,0)(5,0){3}{\line(1,0){1}}
             \put(81,0){\vector(1,0){8}}
           }
           \put(0,60){
                \multiput(30,0)(20,0){2}{\circle*{3}}
                \multiput(93,0)(20,0){2}{\circle*{3}}
             \multiput(34,0)(63,0){2}{\vector(1,0){12}}
             \put(54,0){\line(1,0){8}}
             \multiput(66,0)(5,0){3}{\line(1,0){1}}
             \put(81,0){\vector(1,0){8}}
           }
           \multiput(0,0)(143,0){2}{\circle*{3}}
           \put(3,1.5){\vector(2,1){24}}
           \put(1.8,2.4){\vector(3,4){26.4}}
           \put(1.5,3){\vector(1,2){27}}
           \put(113,0){
             \put(3,13.5){\vector(2,-1){24}}
             \put(1.8,37.6){\vector(3,-4){26.4}}
             \put(1.5,57){\vector(1,-2){27}}
           }
           \put(139,0){\vector(-1,0){135}}
           \put(71.5,63){\HBCenter{\small $\delta_1$}}
           \put(71.5,43){\HBCenter{\small $\delta_2$}}
           \put(71.5,18){\HBCenter{\small $\delta_t$}}
           \put(71.5,-3){\HTCenter{\small $\eta$}}
           \multiput(41.5,30)(60,0){2}{
              \multiput(0,0)(0,-3){3}{\circle*{1}}
            }
            \multiput(71.5,35)(0,-3){3}{\circle*{1}}
            \put(-3,0){\RVCenter{\small $x$}}
            \put(146,0){\LVCenter{\small $y$}}
         }
       \end{picture}
     \end{equation}
\end{proposition}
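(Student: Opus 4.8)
The plan is to argue by contradiction: suppose two distinct shortest paths $\delta_i$ and $\delta_j$ antiparallel to $\eta$ share a vertex other than the common start $x$ and end $y$. I would first fix notation, writing $\delta_i = (x = z_0 \to z_1 \to \cdots \to z_m = y)$ and $\delta_j = (x = w_0 \to w_1 \to \cdots \to w_n = y)$, and let $z_a = w_b$ be a common intermediate vertex, chosen so that the "first" such coincidence is isolated in a suitable sense. The key idea is that the two subpaths of $\delta_i$ and $\delta_j$ running between two consecutive shared vertices together form a non-intersecting cycle in $Q$, and since $Q$ is cyclically oriented every chordless cycle is oriented — but a cycle built from two parallel directed subpaths (both oriented from the earlier shared vertex toward the later one) cannot be oriented unless one of the two subpaths is a single arrow, in which case that arrow would be a chord of the oriented cycle $\eta \delta_i$ or $\eta\delta_j$, contradicting that $\delta_i, \delta_j$ are shortest (via Lemma~\ref{lem:shortest-path}, which identifies shortest paths with those completing $\eta$ to a chordless cycle).

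More carefully, the first step is to reduce to two shared vertices $u = z_a = w_b$ and $u' = z_{a'} = w_{b'}$ with $a < a'$, $b < b'$, such that the open subpaths $\delta_i' \colon u \to \cdots \to u'$ (inside $\delta_i$) and $\delta_j'\colon u \to \cdots \to u'$ (inside $\delta_j$) have no interior vertex in common. Then $\delta_i'$ and $\delta_j'$ are two parallel directed paths from $u$ to $u'$ whose concatenation (traversing one forward and the other backward) is a non-intersecting cycle $c$. Next I would pass to a chordless cycle: take the full subquiver on the vertices of $c$ and extract a chordless cycle inside it; because $Q$ is cyclically oriented this chordless cycle is oriented. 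The third step is the combinatorial heart: any chord of $c$ joining a vertex of $\delta_i'$ to a vertex of $\delta_j'$ (other than $u, u'$) would, composed with pieces of $\delta_i'$ or $\delta_j'$, produce a directed path strictly shorter than one of them contradicting shortest-ness — so in fact $c$ itself, or a cycle very close to it, must be chordless, hence oriented, which is impossible since both $\delta_i'$ and $\delta_j'$ run from $u$ to $u'$. Finally, once the interior vertices are disjoint, I would handle the degenerate cases where $\delta_i'$ or $\delta_j'$ has length one separately, using Lemma~\ref{lem:shortest-path} to derive a chord of $\eta \delta_i$ or $\eta\delta_j$.

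The main obstacle, I expect, is the bookkeeping in the third step: ruling out chords of the cycle $c$ formed by the two subpaths and making precise how a chord yields a shorter antiparallel directed path. One has to be careful because a chord could be oriented either way, and one direction gives a shortcut inside $\delta_i'$ while the other gives a shortcut inside $\delta_j'$ — but in either case the resulting shortcut, spliced with $\eta$, contradicts the shortest-path hypothesis via Lemma~\ref{lem:shortest-path}. An alternative, possibly cleaner route is to mimic the inductive "peeling" argument used in the proof of Lemma~\ref{lem:shortest-path}: repeatedly use chords to shrink the union of $\delta_i$ and $\delta_j$ down to a chordless non-oriented cycle together with $\eta$, directly contradicting that $Q$ is cyclically oriented. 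Once the first claim (shortest paths meet only at $x$ and $y$) is established, the picture~\eqref{eq:diag-antiparallel} is just a restatement, so no further work is needed there.
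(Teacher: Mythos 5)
Your overall strategy --- reduce to two internally disjoint subpaths $\delta_i'$, $\delta_j'$ between consecutive shared vertices $u$, $u'$, observe that together they form a non-oriented cycle $c$, and derive a contradiction with the cyclic orientation of $Q$ by analysing the chords of $c$ --- is the same as the paper's. Chords with both endpoints on $\delta_i'$ (or both on $\delta_j'$, or involving $u$ or $u'$) are indeed excluded, since they are chords of the chordless cycle $\eta\delta_i$ (resp.\ $\eta\delta_j$). The gap is in your treatment of the remaining case, a ``cross-chord'' joining an interior vertex of $\delta_i'$ to an interior vertex of $\delta_j'$. Such an arrow is \emph{not} a chord of $\eta\delta_i$ or of $\eta\delta_j$, so it does not contradict the shortest-path hypothesis, directly or via Lemma~\ref{lem:shortest-path}. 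Moreover, ``shortest path'' in this paper means that $\eta\gamma$ is chordless, not that $\gamma$ has minimal length (the paths $\delta_1,\ldots,\delta_t$ in the diagram may have different lengths), so producing ``a directed path strictly shorter than one of them'' would not be a contradiction either. As stated, your third step does not close.

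The mechanism that actually works, and is the one the paper uses, is a descent: a cross-chord splits $c$ into two smaller non-intersecting cycles, and the one containing the shared vertex $u$ still has $u$ as a source (both arrows of that cycle at $u$ point away from it), hence is again non-oriented; and no chord of any of these cycles can end at $u$, because such an arrow would join two vertices of $\delta_i$ or two vertices of $\delta_j$ and hence be a chord of $\eta\delta_i$ or $\eta\delta_j$. Iterating, one reaches a chordless cycle that still has $u$ as a source, hence is non-oriented --- the desired contradiction with $Q$ being cyclically oriented. Note that merely ``extracting a chordless cycle'' from a non-oriented cycle does not in general produce a non-oriented one (a non-oriented square with an oriented diagonal splits into two oriented triangles), so the invariant ``$u$ remains a source'' is exactly what forces the contradiction. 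Your suggested alternative of mimicking the peeling argument of Lemma~\ref{lem:shortest-path} is the right instinct and is essentially the paper's proof, but without identifying this invariant the descent does not terminate in anything contradictory.
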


\begin{proof}
Let $\eta\colon y\rightarrow x$ and
$\delta_1=\alpha_m\cdots\alpha_1$ and
$\delta_2=\beta_n\cdots\beta_1$ be two shortest paths antiparallel
to $\eta$. Let $\varepsilon_1=\alpha_j\alpha_{j-1}\cdots\alpha_i$ be
a subpath of $\delta_1$ which is parallel to a subpath
$\varepsilon_2=\beta_\ell\cdots\beta_k$ of $\delta_2$.

Suppose that $\varepsilon_1$ is a proper subpath of $\delta_1$, that
is, $i>1$ or $j<m$. Then also $\varepsilon_2$ is a proper subpath of
$\delta_2$. Then
$$
z_i\xrightarrow{\alpha_i} z_{i+1}\rightarrow\cdots\rightarrow z_{j}
\xrightarrow{\alpha_j} z_{j+1}=z'_{\ell+1}
\xleftarrow{\beta_\ell}z_{\ell}\leftarrow\cdots\leftarrow
z'_{k+1}\xleftarrow{\beta_k}z'_k=z_i
$$
is a non-oriented cycle and hence by hypothesis not chordless.
If no chord would end in $z_i=z_k'$ then there would exist
a non-oriented chordless cycle containing the arrows $\alpha_i$ and $\beta_k$, in contradiction to the hypothesis.
Hence there must exist a chord ending in $z_i=z'_k$, say
$\varphi\colon z_h\rightarrow z_i$ for some $i+1<h\leq j+1$, in
contradiction to Lemma~\ref{lem:shortest-path}.
\end{proof}

\begin{proposition}
\label{prop:cyc-or-quiv2} Let $Q$ be a connected, cyclically oriented
quiver and $\delta_1,\ldots,\delta_t$ be the paths which are
antiparallel to an arrow $\eta\colon y\rightarrow x$. Further let
$Q'$ be the full subquiver of $Q$ given by the vertices $Q_0\setminus\{x,y\}$ and for each $i=1,\ldots,t$
let $\Gamma_i$ be the connected component of $Q'$ containing the
vertices of $\delta_i$ distinct from $x$ and $y$.

\begin{center}
\includegraphics{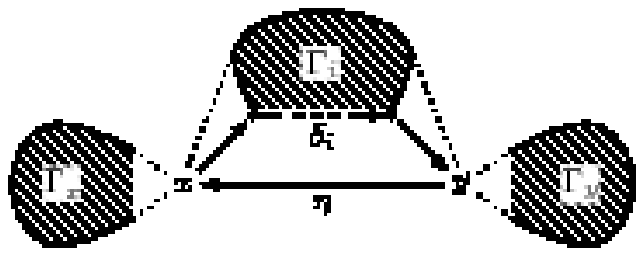}
\end{center}

Then $\Gamma_1,\ldots,\Gamma_t$ are pairwise disjoint subquivers of
$Q'$.
\end{proposition}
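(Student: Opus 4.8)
The plan is to argue by contradiction: suppose some $\Gamma_a$ and $\Gamma_b$ ($a\neq b$) are not disjoint, so they are actually the same connected component $\Gamma$ of $Q'$ containing interior vertices of both $\delta_a$ and $\delta_b$. Then inside $\Gamma$ there is a walk connecting an interior vertex $u$ of $\delta_a$ to an interior vertex $v$ of $\delta_b$, and this walk avoids $x$ and $y$ entirely. First I would reduce to the case of a \emph{shortest walk} $w$ (in the sense of the definition preceding Lemma~\ref{lem:shortest-path}) between a vertex of $\delta_a$ and a vertex of $\delta_b$: among all walks in $Q'$ from a vertex of $\delta_a$ to a vertex of $\delta_b$, pick one of minimal length, and truncate it so that only its two endpoints lie on $\delta_a\cup\delta_b$. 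By Proposition~\ref{prop:cyc-or-quiv} the paths $\delta_a$ and $\delta_b$ meet only at $x$ and $y$, so the two endpoints of $w$ are genuinely an interior vertex $x_a$ of $\delta_a$ and an interior vertex $x_b$ of $\delta_b$.

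Next I would build a cycle in $Q$ out of $w$ together with suitable subpaths of $\delta_a$, $\delta_b$ and the arrow $\eta$. Concretely, $\delta_a$ runs $x \to \cdots \to x_a \to \cdots \to y$ and $\delta_b$ runs $x\to\cdots\to x_b\to\cdots\to y$; combining the initial segment of $\delta_a$ from $x$ to $x_a$, the walk $w$ from $x_a$ to $x_b$, the terminal segment of $\delta_b$ from $x_b$ to $y$, and the arrow $\eta\colon y\to x$ produces a closed walk $c$ in $Q$. The key point is that by the shortest-walk property of $w$, and by the fact that $\delta_a,\delta_b$ are shortest directed paths (so by Lemma~\ref{lem:shortest-path} the cycles $\eta\delta_a$ and $\eta\delta_b$ are chordless), the only possible chords of $c$ are arrows between a vertex of $w$ and a vertex of one of the two $\delta$-segments, or arrows involving $x$ or $y$. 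I would then reduce $c$ to a chordless cycle in the standard way — whenever a chord exists, replace the relevant subwalk by the chord, strictly decreasing the number of vertices — arriving at a chordless cycle $c'$ in $Q$ that still uses interior pieces of both $\delta_a$ and $\delta_b$, or at least uses $\eta$ together with a walk through $Q'$ that is not a subpath of any single $\delta_i$.

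Finally I would derive the contradiction with the cyclically oriented hypothesis. Since $Q$ is cyclically oriented, $c'$ must be an oriented cycle. If $c'$ contains $\eta\colon y\to x$, then $c'$ is an oriented cycle of the form $\eta\delta$ for some directed path $\delta$ antiparallel to $\eta$, hence by Lemma~\ref{lem:shortest-path} $\delta$ is one of the shortest paths $\delta_i$; but $\delta$ passes through the interior vertex $x_a$ of $\delta_a$ and through $x_b$ of $\delta_b$, forcing (again via Proposition~\ref{prop:cyc-or-quiv}) $\delta = \delta_a = \delta_b$ and $a=b$, a contradiction. If instead the reduction removed $\eta$, then $c'$ is a chordless cycle lying inside the subquiver on $\{x,y\}\cup\Gamma$ not using $\eta$; tracing how it was obtained, it still contains a subwalk of $w$ joining a vertex near $\delta_a$ to a vertex near $\delta_b$ without passing through the edge $\eta$, and one checks this cycle cannot be consistently oriented because $\delta_a$ and $\delta_b$ are both oriented \emph{from $x$ to $y$} while $\eta$ is the only arrow allowing the return trip — so removing $\eta$ leaves a source or a sink on the cycle. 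I expect the main obstacle to be the careful bookkeeping in this last step: making the chord-reduction argument precise enough that the resulting chordless cycle provably still "connects" $\delta_a$ to $\delta_b$ in a way incompatible with orientation, rather than collapsing to something trivial. Handling the boundary cases where a chord lands exactly on $x$ or $y$ will require a little extra care, but these can be absorbed by allowing $x_a$ or $x_b$ to coincide with the appropriate endpoint of $\eta$.
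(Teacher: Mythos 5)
Your overall strategy (argue by contradiction via a minimal connecting walk in $Q'$, assemble a cycle, reduce it to a chordless cycle, and invoke the cyclically-oriented hypothesis) is the right toolkit, but the specific cycle you build and the endgame you sketch have a genuine gap, which you yourself flag. The cycle $x\to\cdots\to x_a \;\hbox{--}\; w \;\hbox{--}\; x_b\to\cdots\to y\xrightarrow{\eta}x$ has no vertex that is forced to remain a source or sink under chord reduction: $x$ and $y$ are flow-through vertices on it precisely because you included $\eta$, and the orientation of $w$ is unknown. The chords of this cycle are exactly the arrows joining $x$ (or $y$) to \emph{interior} vertices of $w$ (arrows between $w$'s interior and $\delta_a$, $\delta_b$ are excluded by minimality of $w$, and everything else by Lemma~\ref{lem:shortest-path} and Proposition~\ref{prop:cyc-or-quiv}); shortcutting along such a chord deletes $x_a$ or $x_b$ from the cycle. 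So the reduction can perfectly well terminate at a chordless oriented cycle $\eta\delta$ where $\delta$ is a shortest path through the interior of $w$ only, i.e.\ $\delta=\delta_i$ for some third index $i$ -- and that is not a contradiction, it merely tells you $\Gamma_i=\Gamma_a=\Gamma_b$. Your claim that the reduced cycle ``still uses interior pieces of both $\delta_a$ and $\delta_b$'' is exactly what fails, and your fallback for the case where $\eta$ is removed (``one checks this cycle cannot be consistently oriented'') is not an argument. Rescuing your route would require either taking the minimality of the connecting walk over \emph{all} pairs of indices and running an induction on its length, or a different construction.

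The paper avoids this by never putting $\eta$ on the cycle. It first forms the cycle through $x$ built from the two \emph{initial} segments $x\to\cdots\to a_{m'}$ and $x\to\cdots\to b_{n'}$ joined by the minimal walk $\sigma$; this cycle has $x$ as a source, all its chords must emanate from $x$ into the interior of $\sigma$, and choosing the nearest such chord produces a chordless, hence oriented, cycle which pins down the orientation $a_{m'}\to c_2$ of the first edge of $\sigma$. It then forms a second cycle through $y$ from the two \emph{terminal} segments and $\sigma$; there $a_{m'}$ is a source into which no chord of $Q$ can end, so every chord reduction preserves this source and one lands on a non-oriented chordless cycle, contradicting the hypothesis. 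That ``irremovable source'' is the missing idea in your write-up.
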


\begin{proof}
Suppose the contrary. Then there exists in $Q'$ a non-oriented walk
between $\delta_i\setminus\{x,y\}$ and $\delta_j\setminus\{x,y\}$
for some $i\neq j$. To fix notation, let
\begin{align*}
\delta_i&=(x=a_1\longrightarrow a_2\longrightarrow\cdots\longrightarrow a_m=y)\\
\delta_j&=(x=b_1\longrightarrow
b_2\longrightarrow\cdots\longrightarrow b_n=y)
\end{align*}
and let
$$
\sigma=(a_{m'}=c_1\myjoin c_2\myjoin\cdots\myjoin c_p=b_{n'})
$$
be a walk of shortest length $p$. Then
$$
x=a_1\longrightarrow\cdots\longrightarrow a_{m'} =c_1\myjoin
c_2\myjoin\cdots\myjoin c_p=b_{n'}\longleftarrow
\cdots\longleftarrow b_1=x
$$
is a non-oriented cycle and hence not chordless. However by
Proposition~\ref{prop:cyc-or-quiv} and the minimality of the length
of $\sigma$, no chord can exist between two points of
$\{a_2,\ldots,a_{m'},c_2,\ldots,c_{p-1},b_{n'},\ldots,b_2\}$.
Therefore all chords join $x$ with a vertex $c_{p'}$ with $1<p'<p$
and we may assume $p'$ to be minimal with that property.
Consequently
$$
x=a_1\longrightarrow\cdots\longrightarrow a_{m'} =c_1\myjoin
c_2\myjoin\cdots\myjoin c_{p'}\myjoin x
$$
is a chordless cycle and therefore oriented. This shows that the
edge $a_{m'}\myjoin c_2$ is oriented towards $c_2$, that is,
$a_{m'}\longrightarrow c_2$. Now,
$$
y=a_m\longleftarrow a_{m-1}\longleftarrow\cdots\longleftarrow
a_{m'}\longrightarrow c_2\myjoin\cdots\myjoin
c_p=b_{n'}\longrightarrow \cdots\longrightarrow b_n=y
$$
is a non-oriented cycle for which no chord can end in $a_{m'}$.
Hence $Q$ contains a non-oriented chordless cycle, a contradiction.
\end{proof}

\begin{proposition}
\label{prop:cyc-or-quiv3} Let $Q$ be a connected cyclically oriented
quiver and let $Q'$ and $\Gamma_1,\ldots,\Gamma_t$ be as in
Proposition~\ref{prop:cyc-or-quiv2}. Furthermore, let $Q''$ be the
subquiver of $Q$ obtained by deleting
$\Gamma_1\cup\ldots\cup\Gamma_t$ and also the arrow $\eta$ (but not
$x$ and $y$). Then define $\overline{\Gamma}_x$ (resp.
$\overline{\Gamma}_y$) to be the connected component of $Q''$ which
contains $x$ (resp. $y$). Furthermore, for $i=1,\ldots,t$, let
$\overline{\Gamma}_i$ be the subquiver of $Q$ obtained from the full
subquiver of $Q$ on the vertices $\Gamma_i\cup\{x,y\}$ by removing
the arrow $\eta$.

\begin{center}
\includegraphics{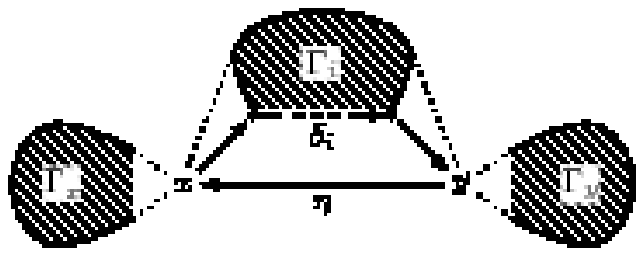}
\end{center}

Then each chordless cycle of $Q$ different from
$\eta\delta_1,\ldots,\eta\delta_t$ is contained in one of the
subquivers
$\overline{\Gamma}_x,\overline{\Gamma}_y,\overline{\Gamma}_1,\ldots,\overline{\Gamma}_t$.
\end{proposition}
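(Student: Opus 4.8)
The plan is to take an arbitrary chordless cycle $c$ of $Q$ which is not one of the $\eta\delta_i$ and show it cannot ``spread out'' over more than one of the pieces $\overline{\Gamma}_x,\overline{\Gamma}_y,\overline{\Gamma}_1,\ldots,\overline{\Gamma}_t$. The key structural input is Proposition~\ref{prop:cyc-or-quiv2}: the $\Gamma_i$ are pairwise disjoint connected components of $Q'=Q\setminus\{x,y\}$, so any path in $Q$ joining a vertex of $\Gamma_i$ to a vertex of $\Gamma_j$ with $i\neq j$, or to a vertex outside all the $\Gamma_k$, must pass through $x$ or $y$. Thus if a chordless cycle $c$ uses a vertex of some $\Gamma_i$ and is not entirely inside $\overline{\Gamma}_i$, then $c$ must visit $x$ or $y$ (and, by non-intersection, exactly one vertex from $\{x,y\}$ at most once each).

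First I would dispose of the case where $c$ avoids both $x$ and $y$: then $c$ lies in $Q'$, hence in a single connected component, which is one of the $\Gamma_i$ (if $c$ meets some $\Gamma_i$) or in a component of $Q''$ not containing $x$ or $y$. In the former case $c\subseteq\overline{\Gamma}_i$; in the latter, $c$ is confined to $\overline{\Gamma}_x$ or $\overline{\Gamma}_y$ once we observe that components of $Q''$ other than those of $x,y$ are the same as components of $Q'$ disjoint from all $\Gamma_i$ — but actually any such component is itself attached to the rest of $Q$ only through $x$ or $y$ via $\eta$-free arrows, so it sits inside $\overline{\Gamma}_x\cup\overline{\Gamma}_y$; a short argument shows it lies in just one of them. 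Next, suppose $c$ passes through $x$ (the case of $y$ is symmetric) but not through $y$. Removing $x$ from $c$ leaves a directed path $\pi$ inside $Q'$, which therefore lies in a single connected component of $Q'$. If that component is some $\Gamma_i$, then all vertices of $c$ lie in $\Gamma_i\cup\{x\}\subseteq\Gamma_i\cup\{x,y\}$, and since $c\neq\eta\delta_i$ and $\eta\notin c$ (as $y\notin c$), $c$ is a chordless cycle in $\overline{\Gamma}_i$. If that component lies outside all $\Gamma_k$, then $c\subseteq\overline{\Gamma}_x$.

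The remaining, and main, case is when $c$ passes through \emph{both} $x$ and $y$. Then $c=\eta'\,\gamma$ where $\gamma$ is a directed path from $x$ to $y$ (or from $y$ to $x$) running through $Q'$, and the complementary part of $c$ between $y$ and $x$ is a single arrow or path; since $c$ is chordless and $\eta\colon y\to x$ is an arrow of $Q$, if the complementary part were not $\eta$ itself then $\eta$ would be a chord of $c$ unless $\eta\in c$. So either $\eta\in c$, forcing $c=\eta\gamma$ with $\gamma$ a directed path from $x$ to $y$ antiparallel to $\eta$; by Lemma~\ref{lem:shortest-path} $\gamma$ is then a shortest directed path, hence $\gamma=\delta_i$ for some $i$, contradicting $c\neq\eta\delta_i$. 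Or $\eta\notin c$, and then $\eta$ is a genuine chord of the non-intersecting cycle $c$, contradicting chordlessness. Either way we reach a contradiction, so this case does not occur. I expect this last case — ruling out a chordless cycle that genuinely separates $x$ from $y$ without being one of the $\eta\delta_i$ — to be the crux; it is exactly where the hypothesis that $\delta_1,\ldots,\delta_t$ are \emph{all} the antiparallel paths to $\eta$, together with Lemma~\ref{lem:shortest-path}, is used. The bookkeeping nuisance will be checking that the various ``components'' defined in Propositions~\ref{prop:cyc-or-quiv2} and \ref{prop:cyc-or-quiv3} fit together as claimed, i.e. that deleting the $\Gamma_i$ and $\eta$ really does isolate $\overline{\Gamma}_x$ and $\overline{\Gamma}_y$ from each other — this follows because any walk from $x$ to $y$ in $Q$ avoiding $\eta$ and all $\Gamma_i$ would, together with $\eta$, close up to a cycle whose chordless sub-cycles are all oriented, eventually producing an antiparallel path to $\eta$ lying outside every $\Gamma_i$, which is impossible.
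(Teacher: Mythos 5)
Your proof is correct and follows essentially the same case analysis as the paper's: split according to whether the chordless cycle meets $x$ and/or $y$, use the disjointness of the $\Gamma_i$ from Proposition~\ref{prop:cyc-or-quiv2} to confine a cycle avoiding one of $x,y$ to a single component, and observe that a chordless cycle through both $x$ and $y$ must contain $\eta$ (else $\eta$ would be a chord), hence is oriented and equals some $\eta\delta_i$. The isolation of $\overline{\Gamma}_x$ from $\overline{\Gamma}_y$ that you flag at the end is exactly the paper's opening step, argued the same way (a minimal $\eta$-avoiding walk from $x$ to $y$ outside all $\Gamma_i$ would close up with $\eta$ to a chordless, hence oriented, cycle, forcing it to be one of the $\eta\delta_i$).
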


\begin{proof}
Let $\Gamma_x=\overline{\Gamma}_x\setminus\{x\}$ and
$\Gamma_y=\overline{\Gamma}_y\setminus\{y\}$ as full subquivers of
$Q'$. Then $\Gamma_x$ and $\Gamma_y$ are disjoint in $Q'$. Otherwise
there would exist a connection
$$
\sigma=(x=c_1\myjoin c_2\myjoin \cdots\myjoin c_p=y)
$$
in $Q''$ with $c_2,\ldots,c_{p-1}\in\Gamma_x\cup \Gamma_y$. A minimal such connection
together with $\eta$ would be a chordless cycle, hence oriented and
therefore equal to one of the cycles
$\eta\delta_1,\ldots,\eta\delta_t$, a contradiction to the fact that $\Gamma_x,\Gamma_y$ are disjoint by definition from $\Gamma_i$ for any $i=1,\ldots,t$.

Therefore $\Gamma_x,\Gamma_y,\Gamma_1,\ldots,\Gamma_t$ are pairwise
disconnected in $Q'$. This shows that a chordless cycle which does
not contain $x$ or $y$ lies in one of these components of $Q'$.

Let $\gamma$ be a chordless cycle in $Q$ which contains the vertex $x$. If $\gamma$
contains also $y$ then it must contain $\eta$ and therefore it must be one of the cycles
$\eta\delta_1,\ldots,\eta\delta_t$.

Now suppose that $y$ is not a vertex of $\gamma$. Then we first suppose that $\gamma$ contains
one or more arrows of $\delta_i$. Thus, let
$$
\gamma=(x\xrightarrow{\alpha_1} x_1\rightarrow \cdots \rightarrow x_k \xrightarrow{\beta_k} y_{k+1} \rightarrow \cdots
\rightarrow y_m \xrightarrow{\beta_m} y_{m+1}=x)
$$
where $x,x_1,\ldots,x_k$ are vertices of $\delta_i$ but $y_k$ is not. Then, by definition $y_k$ belongs to $\Gamma_i$.
Inductively, we see that $y_h$ also belongs to $\Gamma_i$ or $y_h=x$ (since $y_h=y$ is impossible, by hypothesis),
showing that $\gamma$ is contained in $\overline{\Gamma}_i$.

It remains to consider the case where $x$ is the only vertex of \eqref{eq:diag-antiparallel} belonging to $\gamma$. In that case
we have
$$
\gamma=(x\xrightarrow{\beta_1} y_1\rightarrow \cdots \rightarrow y_m \xrightarrow{\beta_m} y_{m+1}=x)
$$
and very similar argument works depending whether $y_1$ belongs to $\Gamma_i$ for some $i$ or to $\Gamma_x$.
In any case, it follows inductively that all other vertices must also belong to the same component, showing that
$\gamma$ is contained in $\overline{\Gamma}_i$ or $\overline{\Gamma}_x$.

The case where $y$ belongs to $\gamma$ but $x$ is
not is handled completely similar.
\end{proof}

\subsection{Existence of admissible cuts}

\begin{proposition}
\label{prop:adm-cut-exists}
Let $Q$ be a cyclically oriented quiver.
Then for each arrow $\alpha$ which belongs to an oriented cycle,
there exists an admissible cut $\Sigma$ which contains $\alpha$.
\end{proposition}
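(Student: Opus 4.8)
The plan is an induction on the number of arrows of $Q$. If $Q$ has no oriented cycle there is nothing to prove, and if $Q$ is disconnected I would treat each component separately (using the inductive hypothesis on the component containing $\alpha$, and the inductive hypothesis or the empty cut on the others), so we may assume $Q$ connected and that the given arrow $\alpha\colon y\to x$ lies on an oriented cycle. Put $\eta:=\alpha$ and let $\delta_1,\dots,\delta_t$ be the shortest paths antiparallel to $\eta$; by Lemma~\ref{lem:shortest-path} these are exactly the paths $\gamma$ for which $\eta\gamma$ is a chordless cycle, so that $\eta\delta_1,\dots,\eta\delta_t$ are precisely the chordless cycles of $Q$ through $\alpha$. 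I would then form the subquivers $\overline{\Gamma}_x,\overline{\Gamma}_y,\overline{\Gamma}_1,\dots,\overline{\Gamma}_t$ of Proposition~\ref{prop:cyc-or-quiv3}. Using Proposition~\ref{prop:cyc-or-quiv2} (disjointness of the $\Gamma_i$, hence of the $\overline{\Gamma}_i$ away from $x$ and $y$), the absence of an arrow between $x$ and $y$ in $\overline{\Gamma}_i$, and $y\notin\overline{\Gamma}_x$, $x\notin\overline{\Gamma}_y$, one checks that the arrow set of $Q$ is the disjoint union of $\{\eta\}$ with the arrow sets of these pieces; in particular each piece has strictly fewer arrows than $Q$. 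Each piece is again cyclically oriented: a chord of a cycle lying inside a piece is a chord in $Q$, and the only arrow missing from a piece that could serve as such a chord is $\eta$, which cannot, since $\eta$ joins $x$ and $y$ while no cycle of a piece contains both $x$ and $y$. Finally, by Proposition~\ref{prop:cyc-or-quiv3}, every chordless cycle of $Q$ other than the $\eta\delta_i$ lies in exactly one of the pieces, and remains chordless there.

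The cut I would build is $\Sigma:=\{\alpha\}\cup\Sigma_x\cup\Sigma_y\cup\Sigma_1\cup\dots\cup\Sigma_t$, where each $\Sigma_z$ is an admissible cut of the piece $\overline{\Gamma}_z$ coming from the inductive hypothesis. For $\overline{\Gamma}_x$ and $\overline{\Gamma}_y$ any admissible cut will do (the empty one if the piece is acyclic). For $\overline{\Gamma}_i$ I need $\Sigma_i$ to contain \emph{no} arrow of $\delta_i$: the only arrows of the chordless cycle $\eta\delta_i$ besides $\alpha$ lie in $\delta_i\subseteq\overline{\Gamma}_i$, and since $\alpha$ belongs to no piece and distinct pieces share no arrow, $\Sigma$ will meet $\eta\delta_i$ in exactly $\alpha$ precisely when $\Sigma_i\cap\delta_i=\emptyset$. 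Now $\delta_i$ is an \emph{induced} directed path of $\overline{\Gamma}_i$ — the only arrows of $\overline{\Gamma}_i$ among the vertices of $\delta_i$ are its own, because $\eta\delta_i$ is chordless in $Q$ — and in particular $\delta_i$ contains no oriented cycle, so no chordless cycle of $\overline{\Gamma}_i$ is contained in it. To produce such a $\Sigma_i$ I would strengthen the statement being proved to: \emph{for a cyclically oriented quiver $R$, an arrow $\beta$ of $R$ on an oriented cycle, and a set $F$ of arrows of $R$ with $\beta\notin F$ and such that $F$ contains no oriented cycle, there is an admissible cut of $R$ containing $\beta$ and disjoint from $F$.} The original statement is the case $F=\emptyset$, and for the piece $\overline{\Gamma}_i$ one applies the strengthened statement with $F$ the arrow set of $\delta_i$ (and $\beta$ any arrow of $\overline{\Gamma}_i$ on an oriented cycle but outside $\delta_i$, such a cycle not being contained in $\delta_i$; if $\overline{\Gamma}_i$ is acyclic take $\Sigma_i=\emptyset$).

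Granting the strengthened statement for quivers with fewer arrows, the assembly is immediate: each $\eta\delta_i$ meets $\Sigma$ exactly in $\alpha$ by the choice of $\Sigma_i$ and the arrow-disjointness of the pieces; and every other chordless cycle of $Q$ lies in a unique piece $\overline{\Gamma}_z$, is met exactly once there by the admissible cut $\Sigma_z$, and is met by no other summand of $\Sigma$ (again by arrow-disjointness and because $\alpha$ lies in no piece). Hence $\Sigma$ is an admissible cut of $Q$ containing $\alpha$. The remaining task is to run the same argument for the strengthened statement: with $\eta':=\beta$ one forms the analogous pieces of $R$, puts $\beta$ into the cut, and for each piece hands down a forbidden set — the portion of $F$ lying in that piece, together with, for a piece $\overline{\Gamma}'_j$ coming from a component $\Gamma'_j$, the arrows of the corresponding shortest path $\delta'_j$.

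The hard part is exactly this last point: one must check that the forbidden set handed to each sub-piece still contains no oriented cycle, so that the inductive hypothesis applies. Both ingredients of this set are individually free of oriented cycles (the portion of $F$ because $F$ is, and $\bigcup_j\delta'_j$ because within it $x'$ is a source, $y'$ a sink, and elsewhere it is a disjoint union of directed paths), but their union could a priori fail to be. Ruling this out uses the cyclic orientation of $R$ together with Proposition~\ref{prop:cyc-or-quiv} and Proposition~\ref{prop:cyc-or-quiv2} (the $\delta'_j$ are internally disjoint and the $\Gamma'_j$ pairwise disjoint) and the chordlessness of the $\eta'\delta'_j$ — equivalently, that the full subquiver of $R$ on $\{x',y'\}$ together with the interior vertices of the $\delta'_j$ consists of nothing but $\eta'$ and the $\delta'_j$ — which should force any putative oriented cycle in the enlarged forbidden set to pass through $\eta'$, contradicting $\eta'\notin F$. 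Making this precise, and in particular choosing $\beta$ inside each piece so that the retained portion of $F$ interacts harmlessly with the newly forbidden $\delta'_j$ (for instance so that $F$ is not unfavourably split by the decomposition, which the disjointness statements permit), is where the genuine work lies; the base cases (acyclic $R$, empty cut) are trivial.
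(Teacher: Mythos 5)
Your overall strategy is the same as the paper's: single out one arrow $\eta$ of a chordless cycle through $\alpha$, decompose $Q$ into the pieces $\overline{\Gamma}_x,\overline{\Gamma}_y,\overline{\Gamma}_1,\ldots,\overline{\Gamma}_t$ of Propositions~\ref{prop:cyc-or-quiv2} and~\ref{prop:cyc-or-quiv3}, induct on the number of arrows, and take the union of admissible cuts of the pieces. The difference in set-up is that you take $\eta=\alpha$ and put $\alpha$ itself into $\Sigma$, so you must force each $\Sigma_i$ to be \emph{disjoint} from $\delta_i$; the paper instead takes $\eta=\alpha_m$ to be the \emph{last} arrow of a chordless cycle $\alpha_m\cdots\alpha_1$ with $\alpha_1=\alpha$, so that $\alpha=\beta_{1,1}$ becomes the first arrow of the antiparallel path $\delta_1$, lands inside the piece $\overline{\Gamma}_1$, and the inductive hypothesis is invoked to produce $\Sigma_i$ \emph{containing} the first arrow $\beta_{i,1}$ of each $\delta_i$ (with $\eta\notin\Sigma$). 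Either way the crux is the same: one must control $\Sigma_i\cap\delta_i$ exactly (empty in your version, equal to $\{\beta_{i,1}\}$ in the paper's), and the bare statement of the proposition is not a strong enough inductive hypothesis to guarantee this.

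This is where your proposal stops being a proof. You correctly diagnose that the induction must be strengthened (an admissible cut containing a prescribed arrow and avoiding a prescribed cycle-free set $F$), but you then state explicitly that verifying the strengthened statement propagates — in particular that the forbidden set handed to each sub-piece, namely $F$ restricted to the piece together with the arrows of the new $\delta'_j$, still contains no oriented cycle, and that a suitable $\beta$ can be chosen in each sub-piece — ``is where the genuine work lies,'' and you do not carry it out. That verification is not a routine check: an oriented cycle in the enlarged forbidden set would have to be excluded using Proposition~\ref{prop:cyc-or-quiv} and the chordlessness of the cycles $\eta'\delta'_j$, and without it the whole induction is unfounded. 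So as written the proposal is an accurate plan with the central step missing, rather than a complete argument. (It is only fair to add that the paper's own proof passes over exactly the same point in silence — it never argues that the cut $\Sigma_i$ supplied by induction meets $\delta_i$ only in $\beta_{i,1}$ — so your formulation of the strengthened statement is a genuine contribution; but to count as a proof it must actually be proved.)
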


\begin{proof}
Let $\gamma=\alpha_m\cdots\alpha_2\alpha_1$ be an oriented chordless
cycle with $\alpha_1=\alpha$ and set $\eta=\alpha_m$. Then let
$\delta_1,\delta_2,\ldots,\delta_t$ be
the shortest paths which are antiparallel to $\eta$. We assume without loss of generality that $\delta_1==\alpha_{m-1}\cdots\alpha_1$. To fix
notation let $\delta_i=\beta_{i,n_i}\cdots\beta_{i,1}$ for
$i=2,\ldots,t$. Then define
$\Sigma'=\{\alpha,\beta_{2,1},\ldots,\beta_{t,1}\}$.

Let $\eta\colon y\rightarrow x$ and define $Q'=Q\setminus\{x,y\}$
and $\Gamma_i$ for $i=1,\ldots,t$ as in Proposition
\ref{prop:cyc-or-quiv2} and $\overline{\Gamma}_i$ as in Proposition~\ref{prop:cyc-or-quiv3}. By induction on the
number of arrows, there exists an admissible cut $\Sigma_i$ in
$\overline{\Gamma}_i$ with $\beta_{i,1}\in\Sigma_i$ for
$i=1,\ldots,t$ (where $\beta_{1,1}=\alpha$). Furthermore, again by
induction hypothesis there exist admissible cuts $\Sigma_x$ of
$\overline{\Gamma}_x$ and $\Sigma_y$ of $\overline{\Gamma}_y$. By
Proposition~\ref{prop:cyc-or-quiv3}, the set
$
\Sigma_x\cup\Sigma_y\cup\Sigma_1\cup\ldots\cup\Sigma_t
$
is an admissible cut of $Q$.
\end{proof}

\begin{lemma}
\label{lem:no-bypass}
Let $Q$ be a cyclically oriented quiver and
$\Sigma$ an admissible cut of $Q$. Then the quiver $Q'$, obtained
from $Q$ by removing the arrows $\Sigma$, has no \emph{bypass}, that
is, an arrow parallel to a path.
\end{lemma}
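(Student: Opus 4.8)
The plan is to prove, by induction on the number of arrows of $Q$, the slightly stronger statement that $Q'=Q\setminus\Sigma$ is both acyclic \emph{and} free of bypasses. First one reduces to the case where $\Sigma$ is \emph{tight}, i.e.\ every arrow of $\Sigma$ lies on some chordless oriented cycle of $Q$: deleting from $\Sigma$ any arrow lying on no chordless oriented cycle keeps it an admissible cut and only enlarges $Q'$, and a subquiver of a bypass‑free quiver is bypass‑free. So assume $\Sigma$ tight.

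For the base case, suppose $Q$ has no oriented cycle. Then $\Sigma=\emptyset$ (tightness), so $Q'=Q$ is acyclic, and if $\beta\colon u\to v$ were a bypass, parallel to a path $\gamma$ of length $\geq 2$ of minimal length, then $\gamma$ is non‑intersecting (acyclicity) and $\beta\gamma^{-1}$ is a non‑intersecting, non‑oriented cycle. By cyclic orientation it has a chord joining two vertices $z_i,z_j$ of $\gamma$ with $i<j$; according to its direction this chord is either a bypass of a strictly shorter subpath of $\gamma$ (contradicting minimality) or closes an oriented cycle with a subpath of $\gamma$ (contradicting acyclicity).

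For the inductive step, fix a chordless oriented cycle $C^{*}$ and let $\eta\colon y\to x$ be the unique arrow of $\Sigma$ on $C^{*}$. Since $\eta\in\Sigma$ and $\Sigma$ meets each chordless oriented cycle in exactly one arrow, $\eta\delta_k\cap\Sigma=\{\eta\}$ for all the shortest paths $\delta_1,\dots,\delta_t$ antiparallel to $\eta$, so each $\delta_k\subseteq Q'$. Writing $\overline{\Gamma}_x,\overline{\Gamma}_y,\overline{\Gamma}_1,\dots,\overline{\Gamma}_t$ for the subquivers of Propositions~\ref{prop:cyc-or-quiv2} and~\ref{prop:cyc-or-quiv3}, one has $Q'=\bigcup_{\bullet}(\overline{\Gamma}_{\bullet}\setminus\Sigma)$, the pieces meeting pairwise only in the vertices $x,y$. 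Each piece is (a cyclically oriented quiver with fewer arrows than $Q$) minus a tight admissible cut: for $\overline{\Gamma}_x,\overline{\Gamma}_y$ this is clear, as they are full subquivers of $Q$ not containing $\eta$; for $\overline{\Gamma}_i$ one argues on the full subquiver $R_i=\overline{\Gamma}_i\cup\{\eta\}$, where $\Sigma\cap(R_i)_1$ is a tight admissible cut (the identity $\eta\delta_k\cap\Sigma=\{\eta\}$ handles the cycles through $\eta$, the remaining chordless oriented cycles of $R_i$ lying inside $\overline{\Gamma}_i$) and which has fewer arrows than $Q$ unless $R_i=Q$; in that degenerate case $t=1$, $\overline{\Gamma}_x=\{x\}$, $\overline{\Gamma}_y=\{y\}$, and one checks instead that $Q\setminus\{\eta\}$ is cyclically oriented with $\Sigma\setminus\{\eta\}$ a tight admissible cut (using that $\eta$ then lies on a unique chordless cycle of $Q$), so the induction hypothesis still applies to $Q'=(Q\setminus\{\eta\})\setminus(\Sigma\setminus\{\eta\})$. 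Hence by induction every piece $\overline{\Gamma}_{\bullet}\setminus\Sigma$ is acyclic and bypass‑free.

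Finally one glues. Since the pieces meet only in $x,y$, an oriented cycle of $Q'$ not contained in a single piece would pass through both $x$ and $y$ and, by simplicity, split into directed paths $\mu\colon x\to y$ and $\nu\colon y\to x$ each lying in a single piece, say $\overline{\Gamma}_i\setminus\Sigma$ and $\overline{\Gamma}_j\setminus\Sigma$; then $\delta_j$ followed by $\nu$ is an oriented cycle inside $\overline{\Gamma}_j\setminus\Sigma$, a contradiction, so $Q'$ is acyclic. A bypass $\beta$ of $Q'$ then lies in a unique piece $\overline{\Gamma}_i\setminus\Sigma$ (the only $Q$‑arrow between $x$ and $y$ is $\eta\notin Q'$); if its parallel path leaves that piece, acyclicity of $Q'$ — in particular the absence of any path $y\to x$ in $Q'$, and of any path $y\to x$ already inside $\overline{\Gamma}_i\setminus\Sigma$ — forces it to factor as a path $u\to x$ inside the piece, a path $x\to y$ in $Q'$, and a path $y\to v$ inside the piece; replacing the middle portion by $\delta_i$ produces a bypass entirely inside $\overline{\Gamma}_i\setminus\Sigma$, contradicting the induction hypothesis. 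The main obstacle is exactly this inductive step: certifying that $\Sigma$ restricts to an admissible cut on each piece of the decomposition of Propositions~\ref{prop:cyc-or-quiv2}--\ref{prop:cyc-or-quiv3} (and handling the degenerate case where that decomposition fails to lower the arrow count, by deleting $\eta$ instead), and then controlling how a cycle or a bypass of the glued quiver $Q'$ can travel between pieces, which is resolved using the paths $\delta_i$ together with the acyclicity obtained along the way.
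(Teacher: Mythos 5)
Your proof is correct, but it takes a genuinely different route from the paper's. The paper argues directly and locally: given an arrow $\eta$ of $Q'$ parallel to a path, it first reduces to the case where the parallel path $\gamma$ is a shortest directed path, then uses cyclic orientation to produce, iteratively, backward chords of the non-oriented cycle $\eta\gamma^{-1}$; these chords necessarily lie in $\Sigma$ and assemble into a chordless oriented cycle through $\eta$ containing at least two arrows of $\Sigma$ (at least two because $Q$ has no $2$-cycles), contradicting the definition of an admissible cut. You instead run an induction on the number of arrows via the decomposition of Propositions~\ref{prop:cyc-or-quiv2} and~\ref{prop:cyc-or-quiv3}, proving the stronger statement that $Q'$ is acyclic and bypass-free; the base case is a minimal-counterexample argument in the acyclic situation, and the gluing step hinges on the observations that each $\delta_k$ survives in $Q'$ and that acyclicity forbids any $y\to x$ path, so every excursion of a cycle or of a bypass's parallel path outside a piece can be rerouted through $\delta_i$. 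What the paper's argument buys is brevity and locality (no decomposition, no induction); what yours buys is that Proposition~\ref{prop:cut-directed} (directedness of $Q'$) comes out simultaneously rather than as a separate proof, and that the chord-chasing of the direct argument is replaced by structural bookkeeping. The price is that several of your steps are compressed and would need to be written out in full: that every arrow of $Q$ other than $\eta$ lies in exactly one piece, that $\Sigma$ restricts to a (tight) admissible cut of each piece (including the degenerate case $R_i=Q$, which you correctly resolve by deleting $\eta$ and checking that $Q\setminus\{\eta\}$ stays cyclically oriented when $t=1$), and the full case analysis of how the parallel path of a bypass can cross between pieces (zero, one or two transitions, with the endpoint possibly equal to $x$ or $y$). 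All of these do check out, so the argument is sound as a self-contained alternative.
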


\begin{proof}
Denote by $Q'=Q\setminus\Sigma$ the quiver obtained from $Q$ by
deleting the arrows $\Sigma$. We first show that no arrow $\eta$ in
$Q'$ is parallel to a shortest directed path $\gamma$ in
$Q'\setminus\{\eta\}$. Assume otherwise. Then $\gamma$ is not an
arrow since $Q$ has no multiple arrows. To fix the notation, let
$$
\gamma=(x_1\xrightarrow{\gamma_1}
x_2\longrightarrow\cdots\longrightarrow
x_{m-1}\xrightarrow{\gamma_{m-1}} x_m)
$$
Since $\eta\gamma$ is non-oriented there must exist a chord. In
fact, there must exist a chord in $Q\setminus Q'$ ending in $x_1$, since otherwise we would have a non-oriented chordless cycle in $Q$ containing the arrows $\eta$ and $\gamma_1$. Let $s_1$ be maximal
with $2<s_1<m$ such that there exists a chord $\beta_1\colon
x_{s_1}\rightarrow x_{1}$. Now, the cycle
$$
x_1\xleftarrow{\beta_1}
x_{s_1}\xrightarrow{\gamma_{s_1}}x_{s_1+1}\longrightarrow\cdots\longrightarrow
x_{m-1}\longrightarrow x_m\xleftarrow{\eta}x_1
$$
is non-oriented. Inductively we get a sequence of arrows which
together with $\eta$ form an oriented cycle
\begin{equation}
\label{eq:proof-no-bypass}
x_1=x_{s_0}\xleftarrow{\beta_1}x_{s_1}\xleftarrow{\beta_2}x_{s_2}
\longleftarrow \cdots\longleftarrow x_{s_{t}}\xleftarrow{\beta_t}
x_{s_{t+1}}=x_m\xleftarrow{\eta} x_1.
\end{equation}
This cycle is chordless: by the maximality the indices $s_j$ there
exists no chord $x_{s_i}\longleftarrow x_{s_j}$ for $i<j+1$ and
since $\gamma$ is a shortest directed path in $Q\setminus\{\eta\}$ there
exists no chord in the opposite direction either. This contradicts
the fact that $\Sigma$ is an admissible cut since
$\beta_1,\ldots,\beta_t\in Q_1\setminus Q_1'=\Sigma$ belong to the cycle
\eqref{eq:proof-no-bypass} and $t>1$ since $Q$ does not contain
$2$-cycles.

Now, if there is an oriented path $\gamma$ parallel to $\eta$ which is not a shortest directed path. Then there exists a shortest directed path $\gamma'$ parallel to $\eta$ and we are done by the previous argument.
\end{proof}

\begin{proposition}
\label{prop:cut-directed}
Let $Q$ be a cyclically oriented quiver and
$\Sigma$ an admissible cut of $Q$. Then the quiver $Q'$ obtained
from $Q$ by removing the arrows $\Sigma$, is directed, that is, each
cycle in $Q$ is non-oriented.
\end{proposition}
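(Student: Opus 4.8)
The plan is to argue by induction on the number of arrows of $Q$, using the local decomposition of the quiver around a single arrow provided by Propositions~\ref{prop:cyc-or-quiv2} and~\ref{prop:cyc-or-quiv3}. If $Q$ has no oriented cycle there is nothing to prove, since $Q'$ is a subquiver of $Q$; so assume $Q$ has an oriented cycle. Reducing along chords (one of the two cycles cut off by a chord is oriented and strictly shorter) and passing to a non-intersecting sub-cycle, $Q$ has a chordless oriented cycle $\gamma$, and since $\Sigma$ is an admissible cut it contains exactly one arrow $\eta\colon y\to x$ of $\gamma$. I now set up the notation of Propositions~\ref{prop:cyc-or-quiv2} and~\ref{prop:cyc-or-quiv3}: let $\delta_1,\ldots,\delta_t$ be the shortest paths antiparallel to $\eta$, with $\gamma=\eta\delta_1$, and let $\overline{\Gamma}_x,\overline{\Gamma}_y,\overline{\Gamma}_1,\ldots,\overline{\Gamma}_t$ be the associated subquivers. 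Then $Q$ is the union of these subquivers together with the arrow $\eta$; any two of them meet only in a subset of $\{x,y\}$; $\eta$ lies in none of them; and every chordless cycle of $Q$ other than $\eta\delta_1,\ldots,\eta\delta_t$ lies inside one of them.

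The heart of the argument — and the step I expect to be the main obstacle — is to check that $\Sigma$ restricts to an admissible cut of each piece. For the full subquivers $\overline{\Gamma}_x$ and $\overline{\Gamma}_y$ this is clear: being full subquivers of $Q$ they are cyclically oriented, and their chordless oriented cycles are precisely the chordless oriented cycles of $Q$ contained in them. The subquiver $\overline{\Gamma}_i$ is obtained from a full subquiver of $Q$ by deleting $\eta$, so a cycle of $\overline{\Gamma}_i$ through both $x$ and $y$ acquires $\eta$ as a chord in $Q$, and this must be ruled out. First, $\overline{\Gamma}_i$ has no chordless non-oriented cycle: such a cycle would either be chordless in $Q$ (impossible), or would pass through $x$ and $y$ with $\eta$ as its unique chord in $Q$, which forces its two $x$–$y$ arcs to be, up to reversal, two distinct shortest paths antiparallel to $\eta$ with interior in $\Gamma_i$ — contradicting the disjointness of the $\Gamma_j$ in Proposition~\ref{prop:cyc-or-quiv2}, since $\delta_i$ is the only such path. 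Hence $\overline{\Gamma}_i$ is cyclically oriented. Next, no chordless oriented cycle of $\overline{\Gamma}_i$ passes through both $x$ and $y$: otherwise, gluing $\eta$ to its arc from $y$ to $x$ would produce a chordless non-oriented cycle of $Q$. Therefore every chordless oriented cycle of $\overline{\Gamma}_i$ is chordless in $Q$, and $\Sigma$ restricted to $\overline{\Gamma}_i$ is an admissible cut. Since $\eta$ lies in none of the pieces, each piece has strictly fewer arrows than $Q$, so by the inductive hypothesis $\overline{\Gamma}_x\setminus\Sigma$, $\overline{\Gamma}_y\setminus\Sigma$ and each $\overline{\Gamma}_i\setminus\Sigma$ are directed.

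It remains to assemble the pieces. For each $i$ the cycle $\eta\delta_i$ is chordless and oriented and contains $\eta\in\Sigma$, hence contains no other arrow of $\Sigma$; so $\delta_i$ is a directed path from $x$ to $y$ lying in $Q'$, and therefore in the directed quiver $\overline{\Gamma}_i\setminus\Sigma$ the vertex $x$ precedes $y$. Now $Q'=Q\setminus\Sigma$ is the union of the directed quivers $\overline{\Gamma}_x\setminus\Sigma$, $\overline{\Gamma}_y\setminus\Sigma$, $\overline{\Gamma}_1\setminus\Sigma,\ldots,\overline{\Gamma}_t\setminus\Sigma$, glued along subsets of $\{x,y\}$ and with no arrow between $x$ and $y$ (the only such arrow of $Q$ was $\eta$). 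If $Q'$ contained an oriented cycle, a non-intersecting one $c$ could not lie inside a single piece, so it would pass through both $x$ and $y$ and split into a directed path from $x$ to $y$ and a directed path from $y$ to $x$; each of these lies inside a single piece, and since $\overline{\Gamma}_x\setminus\Sigma$ and $\overline{\Gamma}_y\setminus\Sigma$ touch the rest of $Q'$ only at $x$, respectively $y$, a directed path joining $x$ and $y$ must lie in some $\overline{\Gamma}_i\setminus\Sigma$ — but that piece has no directed path from $y$ to $x$, as $x$ precedes $y$ there. This contradiction shows that $Q'$ is directed.
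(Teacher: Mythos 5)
Your proof is correct, but it takes a genuinely different route from the paper's. The paper argues directly inside $Q$: it assumes a chordless oriented cycle survives in $Q'=Q\setminus\Sigma$, notes that every chord of it in $Q$ must lie in $\Sigma$, and then runs an iterative chord-selection (always taking the ``maximal'' chord ending at the current source of the resulting non-oriented cycle) to manufacture an oriented cycle consisting entirely of arrows of $\Sigma$, which after one further chord reduction yields a chordless oriented cycle meeting $\Sigma$ in more than one arrow --- contradicting admissibility. You instead induct on the number of arrows using the decomposition of $Q$ into $\overline{\Gamma}_x,\overline{\Gamma}_y,\overline{\Gamma}_1,\ldots,\overline{\Gamma}_t$ from Propositions~\ref{prop:cyc-or-quiv2} and~\ref{prop:cyc-or-quiv3}, in the same spirit as the proof of Proposition~\ref{prop:adm-cut-exists}. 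You correctly identify and discharge the one nontrivial obligation this creates, namely that $\Sigma$ restricts to an admissible cut of each $\overline{\Gamma}_i$ (the point being that $\eta$ cannot appear as a hidden chord of a cycle of $\overline{\Gamma}_i$, which you rule out via the disjointness of the $\Gamma_j$ and the chordlessness of the cycles $\eta\delta_j$), and your assembly step, using that each $\delta_i$ survives in $Q'$ and forces $x$ to precede $y$ in every $\overline{\Gamma}_i\setminus\Sigma$, is sound. The paper's argument is shorter and needs only the definitions, at the price of a delicate chord-chasing induction; yours leans on the already-established structure theory and is more transparent about where the orientation of $Q'$ comes from, at the price of verifying that cuts are compatible with the decomposition (a fact that is in any case implicitly exploited again in the proof of Proposition~\ref{prop:deformed}). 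Two small points you leave tacit but which are harmless: one reduces to $Q$ connected so that Propositions~\ref{prop:cyc-or-quiv2} and~\ref{prop:cyc-or-quiv3} apply, and every component of the quiver $Q''$ of Proposition~\ref{prop:cyc-or-quiv3} indeed contains $x$ or $y$, so that your covering of $Q$ by the pieces together with $\eta$ is exhaustive.
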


\begin{proof}
We first show that there doesn't exist an oriented cycle
$$
\gamma=(x_1\xrightarrow{\gamma_1}
x_2\longrightarrow\cdots\longrightarrow
x_{m-1}\xrightarrow{\gamma_{m-1}} x_m\xrightarrow{\gamma_{m}}x_1)
$$
in $Q'$ which is chordless. Assume otherwise. Then this cycle can
not be chordless in $Q$ since $\Sigma$ is an admissible cut. Let
$\delta_1\colon x_i\rightarrow x_j$ be a chord of $\gamma$.
After possibly renumbering the vertices of $\gamma$,
we may assume without loss of generality, that $i>j=1$ and that $i$ is maximal.
We then set $n_1=i$ and observe that
$$
x_1\xleftarrow{\delta_1}x_{n_1}\xrightarrow{\gamma_{n_1}}x_{n_1+1}\rightarrow\cdot\rightarrow x_x\xrightarrow{\gamma_{m}}x_1
$$
is an non-oriented cycle in $Q$ and therefore can not be chordless. Since $x_{n_1}$ is a source of this cycle there must exist a
chord $\delta_2\colon x_{n_2}\rightarrow x_{n_1}$ ending in $x_{n_1}$. Again we assume that $\delta_2$ is chosen such that $n_2$ is maximal. Proceeding this way we find an oriented cycle
$$
\delta=(x_1\xleftarrow{\delta_1}x_{n_1}\xleftarrow{\delta_2}x_{n_2}\leftarrow\cdots\leftarrow x_{n_t}\xleftarrow{\delta_t}x_1)
$$
whose arrows all belong to $\Sigma=Q_1\setminus Q'_1$.
Now, if the cycle $\delta$ is not chordless then there exists a
chord $\varepsilon\colon x_{n_1}\rightarrow x_{n_j}$ dividing $\delta$ into two cycles of
smaller length, one of them oriented the other non-oriented. Proceeding
with the former we get inductively an oriented chordless cycle in $Q$ which belongs to $\Sigma$
in contradiction to $\Sigma$ being an admissible cut.
\end{proof}

\section{Cluster-tilted algebras whose quiver is cyclically oriented}

In this section we are going to construct the minimal relations of a cluster tilted algebra using only its ordinary quiver.
  By the other hand, it follows by \cite{BIRS} Corollary 6.8, that the relations  in a cluster tilted algebra come from a potential, but in order to construct this potential, we would need the minimal relations of the tilted algebra which give rise to the cluster tilted algebra.  In our case we derive the relations directly from the ordinary quiver of the cluster tilted algebra.

\subsection{Killing of idempotents}

Let $C$ be a cluster-tilted algebra with quiver $Q$ and $e$ some idempotent
of $C$. By \cite{BMR}, the quotient $C/CeC$ is again cluster-tilted.
We shall use this result frequently when $e=\sum_{x\in I}e_x$ is the
sum of trivial paths of some vertices $I\subset Q_0$ and call the quotient
$C/CeC$ to be the algebra \emph{obtained by killing the vertices of $I$}.


\subsection{The homotopy relation}
Given an arrow $\alpha$, we denote by  $\alpha^{-1}$ its formal inverse
  . A \textit{walk} in $Q$ from $x$ to $y$ is a formal composition
  $\alpha^{\epsilon_1}_1\alpha^{\epsilon_2}_2 \cdots
\alpha^{\epsilon_t}_t$ from $x$ to $y$, where $\alpha_i \in Q$ and
$\epsilon_i \in \{1,-1\}$ for all i. The \textit{homotopy relation}
 is the smallest equivalence relation in the set of walks in $Q$ such that:

\begin{enumerate}
    \item[a)] For all  $\alpha:x \rightarrow y$ we have $\alpha\alpha^{-1} \sim
    e_x$ y $\alpha^{-1}\alpha \sim e_y$.

    \item[b)]For each minimal relation
    $\sum^{m}_{i=1}\lambda_iw_i$, we have $w_i \sim w_j,\  \forall\
    i,j$.

    \item[c)] If $u \sim v$, then $wuw' \sim wvw'$, wherever this products are defined.

\end{enumerate}

The set of the equivalence classes of the walks ending or starting in a fix point
 $x_0$ is a group, called the
el \textit{fundamental group of $(Q, I)$}, we denote by
$\pi_1(Q, I)$.

A triangular algebra A is called \textit{simply
connected} if, for any presentation $(Q_A, I)$ of A, the group
$\pi_1(Q_A, I)$ is trivial, \cite{BG}.
A full subquiver $Q'$ of $Q$ is called \emph{convex} if for any two
paths $\gamma$, $\delta$ with $e(\gamma)=s(\delta)$ and $s(\gamma),
e(\delta)\in Q'_0$ then $e(\gamma)\in Q'_0$.
If $Q'$ is a full subquiver of $Q$ we denote $e_{Q'}=\sum_{i\in Q'_0} e_i$ and $A_{Q'}=Ae_{Q'}A$.
An algebra $A$ with quiver $Q$ is called \emph{strongly simply
connected} if for every full and convex subquiver $Q'$ of $Q$ the algebra $A_{Q'}$ is simply
connected.

\subsection{Relations which are antiparallel to arrows}

\begin{proposition}
\label{prop:clu-tilt-canonical} Assume that $C$ is a cluster-tilted
algebra whose quiver contains an arrow $\eta$ which is antiparallel
to $t$ paths $\delta_1,\ldots,\delta_t$ which share only the
starting point and the end point as vertices, that is, the quiver of
$C$ looks as shown in the following picture.
     \begin{center}
       \begin{picture}(143,70)
         \put(0,0){
            \put(0,15){
                \multiput(30,0)(20,0){2}{\circle*{3}}
                \multiput(93,0)(20,0){2}{\circle*{3}}
             \multiput(34,0)(63,0){2}{\vector(1,0){12}}
             \put(54,0){\line(1,0){8}}
             \multiput(66,0)(5,0){3}{\line(1,0){1}}
             \put(81,0){\vector(1,0){8}}
           }
           \put(0,40){
                \multiput(30,0)(20,0){2}{\circle*{3}}
                \multiput(93,0)(20,0){2}{\circle*{3}}
             \multiput(34,0)(63,0){2}{\vector(1,0){12}}
             \put(54,0){\line(1,0){8}}
             \multiput(66,0)(5,0){3}{\line(1,0){1}}
             \put(81,0){\vector(1,0){8}}
           }
           \put(0,60){
                \multiput(30,0)(20,0){2}{\circle*{3}}
                \multiput(93,0)(20,0){2}{\circle*{3}}
             \multiput(34,0)(63,0){2}{\vector(1,0){12}}
             \put(54,0){\line(1,0){8}}
             \multiput(66,0)(5,0){3}{\line(1,0){1}}
             \put(81,0){\vector(1,0){8}}
           }
           \multiput(0,0)(143,0){2}{\circle*{3}}
           \put(3,1.5){\vector(2,1){24}}
           \put(1.8,2.4){\vector(3,4){26.4}}
           \put(1.5,3){\vector(1,2){27}}
           \put(113,0){
             \put(3,13.5){\vector(2,-1){24}}
             \put(1.8,37.6){\vector(3,-4){26.4}}
             \put(1.5,57){\vector(1,-2){27}}
           }
           \put(139,0){\vector(-1,0){135}}
           \put(71.5,63){\HBCenter{\small $\delta_1$}}
           \put(71.5,43){\HBCenter{\small $\delta_2$}}
           \put(71.5,18){\HBCenter{\small $\delta_t$}}
           \put(71.5,-3){\HTCenter{\small $\eta$}}
           \multiput(41.5,30)(60,0){2}{
              \multiput(0,0)(0,-3){3}{\circle*{1}}
            }
            \multiput(71.5,35)(0,-3){3}{\circle*{1}}
            \put(-3,0){\RVCenter{\small $x$}}
            \put(146,0){\LVCenter{\small $y$}}
         }
       \end{picture}
     \end{center}
Then there exists a unique zero relation $\rho$ antiparallel to $\eta$ and $\rho=\sum_{i=1}^{t}\lambda_i\delta_i$ with $\lambda_i\neq 0$ for all $1\leq i\leq t$.
\end{proposition}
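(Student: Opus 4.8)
The plan is as follows. First I would reduce to the situation where $Q_C$ \emph{equals} the displayed quiver, which I call $T$: if $Q_C$ has vertices outside $\eta$ and the $\delta_i$, kill the corresponding idempotents. By \cite{BMR} the quotient is again cluster-tilted, and its quiver is the full subquiver on the surviving vertices — every path running through a killed vertex becomes zero, so no new arrows appear, and since the quiver of a cluster-tilted algebra has no $2$-cycles no arrow of the subquiver is lost; moreover a zero relation of the shape $\sum_i\lambda_i\delta_i$ is undisturbed, as the $\delta_i$ avoid the killed vertices. So assume $Q_C=T$. Then $T$ has no loop and no $2$-cycle (each $\delta_i$ has length $\geq 2$, else $\eta\delta_i$ is a $2$-cycle), it is cyclically oriented with chordless cycles precisely $\eta\delta_1,\dots,\eta\delta_t$, the subquiver $T\setminus\{\eta\}$ is acyclic, and the only paths in $T$ from $x$ to $y$ are the concatenations $\delta_{i_r}\eta\delta_{i_{r-1}}\eta\cdots\eta\delta_{i_1}$; the $\delta_i$ are exactly those that do not traverse $\eta$.

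For the existence of $\rho$ I would use that, by \cite{BIRS}, $C$ is the Jacobian algebra $J(T,W)$ of a quiver with potential. Since $T\setminus\{\eta\}$ is acyclic, every cyclic path of $T$ — hence every term of $W$ — passes through $\eta$. Combining the finite-dimensionality of $C$ with the uniqueness of the cluster-tilted algebra having quiver $T$ (\cite{BMR2}), one normalises $W$ to $\sum_{i=1}^t\lambda_i\,\eta\delta_i$; then $\rho:=\partial_\eta W=\sum_{i=1}^t\lambda_i\delta_i$ lies in the Jacobian ideal, so it is a zero relation antiparallel to $\eta$. (One could instead argue through the relation-extension description $C\cong\ealg{A}=A\ltimes\Ext^2_A(\dual A,A)$ with $A$ a tilted algebra obtained by deleting an arrow-set of $T$ meeting every $\eta\delta_i$, and read off the relation from \cite{ABS}; but the potential route is shorter.)

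Next I would show $\lambda_i\neq0$ for all $i$ and that $\rho$ is, up to scalar, the only zero relation antiparallel to $\eta$. If some $\lambda_j=0$, then $W$ contains no arrow of $\delta_j$, hence no cyclic derivative $\partial_\alpha W$ does either, so every power $(\eta\delta_j)^m$ is nonzero in $C$, contradicting $\dim_k C<\infty$; alternatively, kill the interior vertices of all strands but $\delta_1$ (with $\lambda_1\neq0$, which exists since $\rho\neq0$) and $\delta_j$, obtaining a cluster-tilted algebra with a two-strand quiver in which $\rho$ becomes the zero relation $\lambda_1\delta_1$ — contradicting the two-strand case, where both coefficients are nonzero. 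For uniqueness, any zero relation of $J(T,W)$ lies in the ideal generated by the $\partial_\alpha W$; a combination of these lying in $\operatorname{span}_k\{\delta_1,\dots,\delta_t\}$ can only use $\partial_\eta W$ with scalar coefficients, since every other $\partial_\alpha W$ (with $\alpha$ an arrow of some $\delta_i$) contains an arrow of a $\delta_i$, so no multiple of it lands in that span. Hence the space of zero relations antiparallel to $\eta$ is one-dimensional, spanned by $\rho$.

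The step I expect to be the real obstacle is the normalisation of $W$ to $\sum_i\lambda_i\,\eta\delta_i$ — equivalently, showing that no zero relation antiparallel to $\eta$ requires a path such as $\delta_i\eta\delta_j$. This is precisely where the geometry of $T$ (the strands meeting only at $x$ and $y$, so that $T$ is cyclically oriented and chordless) together with the uniqueness result \cite{BMR2} have to be used in tandem; everything else is bookkeeping with cyclic derivatives and with the idempotent quotients from \cite{BMR}.
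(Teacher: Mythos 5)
Your reduction to the two--terminal quiver $T$ by killing idempotents is sound (and is the same device the paper uses), and your final bookkeeping with cyclic derivatives would work \emph{if} the potential were already known to have the form $W=\sum_{i=1}^t\lambda_i\,\eta\delta_i$ with all $\lambda_i\neq 0$. But that normalisation is exactly the content of the proposition, and the step you offer to justify it does not go through. You invoke ``the uniqueness of the cluster-tilted algebra having quiver $T$'' from \cite{BMR2}, but that uniqueness theorem is proved only for cluster-tilted algebras of \emph{finite representation type}; the quiver $T$, with $t$ strands of arbitrary lengths, is in general not in the mutation class of a Dynkin quiver, so you have no uniqueness to appeal to --- and even where uniqueness holds, it tells you the algebra is determined by $T$, not that its relations avoid the longer antiparallel paths $\delta_i\eta\delta_j$. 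You correctly flag this as ``the real obstacle,'' but then leave it unresolved. The same problem infects your nonvanishing argument: the ``two-strand case, where both coefficients are nonzero'' is not a known input --- it is itself a special case of the claim, and in your first variant the deduction ``$\lambda_j=0\Rightarrow W$ contains no arrow of $\delta_j$'' already presupposes the normalised form of $W$.

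For comparison, the paper closes precisely this gap by three reductions in which the Dynkin uniqueness \emph{does} apply. First, killing all vertices off a single strand $\delta_i$ yields an oriented cycle $C_n$, which is the quiver of a cluster-tilted algebra of type $D_n$; by \cite{BMR2} and Remark~\ref{rem:C_n} its unique minimal relations are the full paths of length $n-1$, which forces any minimal relation meeting $\delta_i$ to contain \emph{all} of $\delta_i$ as a summand (so no relation can involve a proper subpath, ruling out contributions shaped like $\delta_i\eta\delta_j$). Second, if two strands were non-homotopic, killing the others and repeatedly mutating-and-killing reduces to the quiver $G(2,2)$, realised by a cluster-tilted algebra of type $D_4$ in which the two length-two paths are individually nonzero but their sum is a relation --- a contradiction that simultaneously yields $\lambda_i\neq 0$ for every $i$. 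Third, a coefficient-elimination argument between two putative minimal relations reduces again to a $G(m,n)$ with two independent zero relations, which the second step forbids. If you want to keep the quiver-with-potential language of \cite{BIRS}, you would still need these (or equivalent) reductions to establish the normal form of $W$; as written, the proposal assumes its key step.
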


\begin{proof}
  We proceed in steps.

  (i)\ \ \emph{Assume that $x_0=x\xrightarrow{\gamma_1} x_1
    \xrightarrow{\gamma_2} x_2 \rightarrow \cdots \rightarrow
    x_{L-1}\xrightarrow{\gamma_L} x_L=y$ is a shortest path in $Q_C$
    and that there exists a minimal relation, $\rho=\sum_{k=1}^{t}\lambda_k\delta_k$, with $\delta_1 = \gamma_{j}\cdots\gamma_{i+1}$}  for some $0\leq i<j\leq L$ then $i=0$ and $j=L$.
  Let $C'$ be the quotient obtained from $C$ by killing all vertices
  except those along the given path.  A minimal relation in $C$ which
  has as non-zero summand the path $\gamma_{j}\cdots\gamma_{i+1}$
  implies that $\gamma_{j}\cdots\gamma_{i+1}=0$ in $C'$.  Observe that
  the quiver $Q'$ of $C'$ is an oriented cycle. By \cite{BMR} the
  algebra $C'$ is again cluster-tilted and hence by Remark~\ref{rem:C_n} there is a minimal zero relation
  $\gamma_L\cdots\gamma_1$ in $C'$. Hence $i=0$ and $j=L$.

  (ii)\ \
  \emph{The shortest paths which are antiparallel to $\eta$ form one homotopy class
  in $C$.}

  Assume otherwise and choose two non-homotopic paths
  $\delta=\delta_m\cdots\delta_1$ and
  $\varepsilon=\varepsilon_n\cdots\varepsilon_1$. Let $C'$ be the
  quotient obtained from $C$ by killing all vertices which are not
  contained in these two paths.  The quiver of $C'$ looks then as
  follows.

   \begin{center}
       \begin{picture}(143,50)
         \put(0,5){
            \multiput(0,0)(0,40){2}{
                \multiput(30,0)(20,0){2}{\circle*{3}}
                \multiput(93,0)(20,0){2}{\circle*{3}}
             \multiput(34,0)(63,0){2}{\vector(1,0){12}}
             \put(54,0){\line(1,0){8}}
             \multiput(66,0)(5,0){3}{\line(1,0){1}}
             \put(81,0){\vector(1,0){8}}
           }
          \multiput(0,20)(143,0){2}{\circle*{3}}
          \put(-6,18){\HBCenter{$x$}}
          \put(149,18){\HBCenter{$y$}}
           \put(3,22){\vector(3,2){24}}
           \put(3,18){\vector(3,-2){24}}
          \put(113,20){
             \put(3,18){\vector(3,-2){24}}
             \put(3,-18){\vector(3,2){24}}
           }
           \put(139,20){\vector(-1,0){135}}
           \put(0,20){
              \put(12,13){\HBCenter{\small $\delta_1$}}
              \put(12,-13){\HTCenter{\small $\varepsilon_1$}}
            }
            \put(143,20){
              \put(-12,13){\HBCenter{\small $\delta_m$}}
              \put(-12,-13){\HTCenter{\small $\varepsilon_n$}}
            }
           \put(71.5,17){\HTCenter{\small $\eta$}}
         }
       \end{picture}
     \end{center}

  The two arms are of length $m$ and $n$ respectively and we shall call
  this quiver $G(m,n)$.
  Since these two paths are non-homotopic,
  we must have two minimal zero relations $\delta=0$ and $\varepsilon=0$.
  The mutation of the quiver $Q_{C'}$ in the vertex $t(\delta_1)$ and then
  killing this vertex gives the quiver $G(m-1,n)$. Observe that we still must
  have that both paths of length $n-1$ and $m$ respectively, are minimal
  zero relations since the algebra is obtained as quotient of $C´$.
  Proceeding this way we get $G(2,2)$ which is occurs as quiver of a
  cluster-tilted algebra of type $D_4$, where the two paths of length $2$
  are non-zero but their sum forms a minimal zero relation. Hence we got
  a contradiction and all paths from $x$ to $y$ must be homotopic.

  (iii)
  \emph{There exists precisely one minimal relation antiparallel to $\eta$.}

  Otherwise choose some minimal relation $\rho_1$ involving the paths
  $\delta_1,\ldots,\delta_b$, that is
  \begin{equation}
  \label{eq:rel1}
  \rho_1=\sum_{i=1}^b\lambda_i\delta_i
  \quad\text{with $\lambda_i\neq 0$ for $1\leq i\leq b$}.
  \end{equation}
  Since all paths are homotopic there exists a
  second relation $\rho_2$  involving some of these paths and possibly more.
  We can assume that
    \begin{equation}
  \label{eq:rel2}
  \rho_2=\sum_{i=a}^c \mu_i \delta_i
  \quad\text{with $\mu_i\neq0$ for $a\leq i\leq c$}
  \end{equation}
  for some
  $1\leq a\leq b\leq c$. If $a=1$ then we can replace $\rho_2$ by
  $\rho_2-\frac{\mu_1}{\lambda_1}\rho_1$. Similarly if $b=c$,
  we replace $\rho_1$ by $\rho_1-\frac{\lambda_b}{\mu_b}\rho_2$. After these
  replacements we get two relations \eqref{eq:rel1} and \eqref{eq:rel2} with
  $1<a\leq b<c$. Now kill all the idempotents involved in the paths
  except those in $\delta_1,\delta_c$. As a quotient we get an algebra $C´$,
  which is cluster-tilted by \cite{BMR} and whose quiver is $G(m,n)$ for some
  $m$ and $n$ with two zero relations, which is impossible by (ii).
\end{proof}

\begin{proposition}
\label{prop:rel-correspond-arrow}
  Let $C$ be a cluster-tilted algebra whose quiver is cyclically oriented. Then the following holds.
  \begin{itemize}
  \item[{\rm(R1)}]
     The arrows of $Q_C$, which occur in some oriented chordless cycle correspond
     bijectively to the minimal relations in any presentation of $C$.
  \item[{\rm (R2)}]
     Let $\eta$ be some arrow of $Q_C$ which occurs in some
     oriented  chordless cycle and let $\delta_1,\ldots,\delta_t$ be the shortest paths
     which are antiparallel to $\eta$. Then the minimal relation corresponding
     to $\eta$ is of the form $\sum_{i=1}^t \lambda_i\delta_i$ with $\lambda_i\neq 0$
     for all $i$. Moreover the quiver restricted to the vertices involved in all the paths
     $\delta_1,\ldots,\delta_t$ looks as shown in Proposition~\ref{prop:clu-tilt-canonical},
     in particular, the paths $\delta_1,\ldots,\delta_t$ share only the endpoints.
  \end{itemize}
\end{proposition}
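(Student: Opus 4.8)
The plan is to prove (R1) and (R2) simultaneously, by induction on the number of vertices of $Q_C$. The purely geometric part of (R2) is a direct consequence of Proposition~\ref{prop:cyc-or-quiv}, so the real content is the counting of minimal relations. The algebraic inputs are: a quotient of a cluster-tilted algebra by an idempotent is again cluster-tilted \cite{BMR}; the cluster-tilted algebra with a prescribed quiver is unique \cite{BMR2}; and the already-settled polygon case, Proposition~\ref{prop:clu-tilt-canonical}. If $Q_C$ is acyclic then $C$ is hereditary, so it has neither an oriented chordless cycle nor a minimal relation and both statements hold vacuously; hence assume $Q_C$ has an oriented cycle. Fix an arrow $\eta\colon y\to x$ lying in an oriented chordless cycle. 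By Lemma~\ref{lem:shortest-path} the oriented chordless cycles through $\eta$ are exactly $\eta\delta_1,\ldots,\eta\delta_t$, where $\delta_1,\ldots,\delta_t$ are the shortest directed paths antiparallel to $\eta$; by Proposition~\ref{prop:cyc-or-quiv} and Proposition~\ref{prop:cyc-or-quiv2} these paths pairwise meet only in $x$ and $y$, and the full subquiver $P$ of $Q_C$ on the union of their vertices is precisely the ``polygon'' of Proposition~\ref{prop:clu-tilt-canonical}. This is the \emph{moreover} clause of (R2), and it shows the candidate relation $\rho_\eta:=\sum_{i=1}^{t}\lambda_i\delta_i$ is well posed once the $\lambda_i$ are pinned down.

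Next I would produce the relation. Let $e$ be the sum of the trivial paths at the vertices \emph{not} lying on any $\delta_i$. By \cite{BMR} the quotient $C'':=C/CeC$ is cluster-tilted, and its quiver is the full subquiver $P$, i.e.\ the polygon; Proposition~\ref{prop:clu-tilt-canonical} then gives a unique relation of $C''$ antiparallel to $\eta$, of the form $\sum_{i=1}^{t}\lambda_i\delta_i$ with all $\lambda_i\neq 0$, which is moreover minimal in $C''$ since $P$ is a polygon and the $\delta_i$ are chordless. Running this for every arrow lying in an oriented chordless cycle produces a candidate minimal relation $\rho_\alpha$ for each such $\alpha$. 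Since the paths occurring in $\rho_\alpha$ all run from $e(\alpha)$ to $s(\alpha)$, and $Q_C$, being cyclically oriented, has no multiple arrows, the assignment $\alpha\mapsto\rho_\alpha$ is injective, producing non-proportional relations for distinct arrows.

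It remains to transfer $\rho_\eta$ from $C''$ up to $C$, and to show that the relations $\rho_\alpha$ exhaust the minimal relations of $C$; this is where the induction is used. Applying Proposition~\ref{prop:cyc-or-quiv3}, decompose $Q_C$ around $\eta$ into the subquivers $\overline\Gamma_x,\overline\Gamma_y,\overline\Gamma_1,\ldots,\overline\Gamma_t$, so that every oriented chordless cycle of $Q_C$ other than the $\eta\delta_i$ lies in one of these pieces. Killing the appropriate idempotents (again \cite{BMR}) realises the ``completed'' pieces as cluster-tilted algebras on strictly smaller, still cyclically oriented quivers, to which the induction hypothesis applies. Combining this with the polygon analysis above and with the homotopy description of minimal relations (clause (b) of the homotopy relation) — every minimal relation has pairwise homotopic summands — one checks that a minimal relation of $C$ either is inherited from one of the pieces, hence is some $\rho_\alpha$ with $\alpha$ in an oriented chordless cycle inside that piece, or is antiparallel to one of the arrows of some cycle $\eta\delta_i$, hence equals the corresponding $\rho_\alpha$. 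In particular $\rho_\eta$ itself is a minimal relation of $C$ and $\dim_k e_yCe_x=t-1$; and uniqueness of the cluster-tilted algebra with quiver $Q_C$ \cite{BMR2} rules out any further minimal relations. This gives the bijection asserted in (R1) and completes (R2).

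I expect the main obstacle to be exactly the inductive step just sketched: controlling how the minimal relations of $C$ behave under the idempotent quotients of \cite{BMR}, namely ruling out ``spurious'' minimal relations not antiparallel to any arrow, and showing that in the lift of the relation $\sum\lambda_i\delta_i$ of $C''$ the correction terms through the killed vertices actually vanish. The decomposition of Proposition~\ref{prop:cyc-or-quiv3} is tailored to localise this analysis around $\eta$, and it is the homotopy relation together with the rigidity statement \cite{BMR2} that forces the relations to be precisely the $\rho_\alpha$.
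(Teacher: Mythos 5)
Your reduction of the ``moreover'' clause of (R2) to Propositions~\ref{prop:cyc-or-quiv} and~\ref{prop:cyc-or-quiv2} matches the paper, but the two load-bearing steps of your argument both have genuine gaps. First, you route the construction of $\rho_\eta$ through the polygon quotient $C''=C/CeC$ and then must lift the relation back to $C$; you correctly identify this lift (vanishing of correction terms through the killed vertices) as the main obstacle, but you never resolve it. The paper avoids the problem entirely: Proposition~\ref{prop:clu-tilt-canonical} is stated and proved for a cluster-tilted algebra whose quiver merely \emph{contains} the configuration of an arrow $\eta$ antiparallel to paths sharing only their endpoints, so once Proposition~\ref{prop:cyc-or-quiv} guarantees that configuration, it applies directly to $C$ and yields the unique minimal relation $\sum_i\lambda_i\delta_i$ with all $\lambda_i\neq 0$ in $C$ itself, with no lifting needed.

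Second, and more seriously, the surjectivity half of (R1) --- that \emph{every} minimal relation of $C$ is one of the $\rho_\alpha$ --- is not proved by your induction. The decomposition of Proposition~\ref{prop:cyc-or-quiv3} controls chordless cycles, not arbitrary sums of parallel paths, so a minimal relation of $C$ need not be ``inherited from one of the pieces''; and the homotopy clause (b) is a tautology of the generated equivalence relation, not a constraint you can use to localize relations. Your final appeal to the uniqueness of the cluster-tilted algebra with quiver $Q_C$ from \cite{BMR2} fails on two counts: that uniqueness is established only in finite representation type, whereas the proposition is for arbitrary type, and even granting it you would still need to exhibit some cluster-tilted algebra with quiver $Q_C$ whose minimal relations are exactly the $\rho_\alpha$, which is precisely what is being proved. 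The paper's proof supplies the missing ingredient as an external input: by \cite[Lemma 7.2]{BRS} every minimal relation of a cluster-tilted algebra is antiparallel to at least one arrow, and the absence of multiple arrows in a cyclically oriented quiver makes that arrow unique; combined with the uniqueness of the relation antiparallel to each arrow from Proposition~\ref{prop:clu-tilt-canonical}, this gives the bijection at once. Without that lemma (or a proof of it), your argument does not close.
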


\begin{proof}
  It follows from Propositions~\ref{prop:cyc-or-quiv} and
\ref{prop:clu-tilt-canonical} that each arrow $\eta\colon
y\rightarrow x$ corresponds to a unique minimal relation $\rho_\eta$
antiparallel to $\eta$.
  Conversely assume now that $\rho$ is a minimal relation. By \cite[Lemma 7.2]{BRS}
  there exists at least one arrow $\eta$ which is antiparallel to $\rho$ and since $C$ has no double arrows there is no other. This shows (R1).

 (R2) follows from Proposition~\ref{prop:cyc-or-quiv} and Proposition~\ref{prop:clu-tilt-canonical}.
\end{proof}

\begin{example}
The following example shows that the hypothesis that $Q_C$ is cyclically oriented is important.
Let $C=\mu_2(k Q)$, the mutation in the vertex $2$ of the path-algebra $k Q$, where the quiver $Q$ is as shown in the following picture.
\begin{center}
  \begin{picture}(240,70)
    \put(0,20){
        \put(-8,40){\RBCenter{$Q:$}}
    \multiput(0,0)(80,0){2}{\circle*{3}}
    \put(40,40){\circle*{3}}
    \qbezier(36.4,38.8)(10,30)(1.2,3.6)
    \put(36.4,38.8){\vector(3,1){0.001}}
    \put(2,2){\vector(1,1){36}}
    \put(42,38){\vector(1,-1){36}}
    \qbezier(78.8,3.6)(70,30)(43.6,38.8)
    \put(78.8,3.6){\vector(1,-3){0.001}}
    \put(4,-1){\vector(1,0){72}}
    \put(-3,0){\RVCenter{\small $1$}}
    \put(40,43){\HBCenter{\small $2$}}
    \put(83,0){\LVCenter{\small $3$}}
    \put(9,31){\HVCenter{\small $\alpha_1$}}
    \put(24,16){\HVCenter{\small $\alpha_2$}}
    \put(71,31){\HVCenter{\small $\beta_1$}}
    \put(56,16){\HVCenter{\small $\beta_2$}}
    \put(40,-6){\HVCenter{\small $\gamma_1$}}
    }
    \put(180,20){
         \put(0,40){\RBCenter{$Q_C:$}}
         \multiput(0,0)(80,0){2}{\circle*{3}}
         \put(40,40){\circle*{3}}
    \put(38,38){\vector(-1,-1){36}}
    \qbezier(36.4,38.8)(10,30)(1.2,3.6)
    \put(1.2,3.6){\vector(-1,-3){0.001}}
    \put(78,2){\vector(-1,1){36}}
    \qbezier(78.8,3.6)(70,30)(43.6,38.8)
    \put(43.6,38.8){\vector(-3,1){0.001}}
    \put(6,0){\vector(1,0){68}}
    \qbezier(6,2)(40,10.5)(74,2)
    \put(74,2){\vector(4,-1){0.1}}
    \qbezier(6,-2)(40,-10.5)(74,-2)
    \put(74,-2){\vector(4,1){0.1}}
    \qbezier(5.6,-3.8)(40,-21)(74.4,-3.8)
    \put(74.4,-3.8){\vector(2,1){0.1}}
    \qbezier(4.2,-6)(40,-36.3)(75.8,-6)
    \put(75.8,-6){\vector(3,2){0.1}}
    \put(-3,0){\RVCenter{\small $1$}}
    \put(40,43){\HBCenter{\small $2$}}
    \put(83,0){\LVCenter{\small $3$}}
    \put(9,31){\HVCenter{\small $\alpha_1^\ast$}}
    \put(26,19){\HVCenter{\small $\alpha_2^\ast$}}
    \put(71,31){\HVCenter{\small $\beta_1^\ast$}}
    \put(53,19){\HVCenter{\small $\beta_2^\ast$}}
    \put(40,12){\HVCenter{\small $\gamma_1$}}
    \put(40,-25){\HVCenter{\small $\gamma_5$}}
     }
  \end{picture}
\end{center}
All four paths $\alpha_i^\ast\beta_j^\ast$ for $i,j=1,2$ are
zero. Hence there are four relations from $3$ to $1$ but five arrows
antiparallel to them.
\end{example}

\subsection{Algebras satisfying {\rm (R1)} and {\rm (R2)}}

In the following we want to show that the  non-zero coefficients
$\lambda_{\eta,i}^C$
appearing in Proposition~\ref{prop:rel-correspond-arrow} do not
change the isomorphism class of the algebra. Therefore it will be
useful to have some short notation.

\begin{definition}
Let $Q$ be a cyclically oriented quiver. Let $Q_1^\cyc$ be the
set of all arrows of $Q$ which belong to a chordless cycle, and for each arrow $\alpha$ let $\rho_\alpha$ be the sum
of all paths antiparallel to $\alpha$ (with coefficients equal to
$1$).   We then denote by
$\standard{Q}$ the algebra $k Q/\langle R\rangle$ with
$R=\{\rho_\alpha\mid\alpha\in Q_1^\cyc\}$.

\end{definition}

\begin{proposition}
\label{prop:deformed} Let $Q$ be a cyclically oriented quiver. Then
each algebra with quiver $Q$ whose relations satisfy {\rm (R1)} and {\rm (R2)}
is isomorphic to the cyclically normalized algebra $\standard{Q}$.
\end{proposition}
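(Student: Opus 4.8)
The plan is to realise the isomorphism $\standard Q\to A$ as a \emph{rescaling}: fix a presentation $A=kQ/I$ and look for scalars $c_\alpha\in k^\ast$, one for each arrow $\alpha$ of $Q$, such that the algebra automorphism of $kQ$ which fixes all $e_x$ and sends $\alpha\mapsto c_\alpha\alpha$ carries the defining ideal of $\standard Q$ onto $I$. Such an automorphism descends to an isomorphism of the quotients as soon as it maps one ideal onto the other, so everything reduces to choosing the $c_\alpha$ consistently.

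First I would pin down the shape of $I$. By {\rm (R1)} and {\rm (R2)} of Proposition~\ref{prop:rel-correspond-arrow}, the ideal $I$ is generated by relations $\rho^A_\eta$, one for each $\eta\in Q_1^\cyc$, of the form $\rho^A_\eta=\sum_{i=1}^{t_\eta}\lambda_{\eta,i}\,\delta^\eta_i$ with all $\lambda_{\eta,i}\in k^\ast$, where $\delta^\eta_1,\dots,\delta^\eta_{t_\eta}$ are the shortest paths antiparallel to $\eta$; by Proposition~\ref{prop:cyc-or-quiv} any two of these paths meet only in their two common endpoints. The relations defining $\standard Q$ are supported on the same paths, with all coefficients equal to $1$ (one first checks, using that longer antiparallel paths lie in the ideal generated by the $\rho_\alpha$, that keeping only the shortest antiparallel paths in each $\rho_\alpha$ does not change the ideal). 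So it suffices to find $c_\alpha\in k^\ast$ and, for each $\eta\in Q_1^\cyc$, a scalar $\mu_\eta\in k^\ast$, with $\prod_{\alpha\ \mathrm{in}\ \delta^\eta_i}c_\alpha=\mu_\eta\,\lambda_{\eta,i}$ for all $\eta$ and all $1\le i\le t_\eta$; the factor $\mu_\eta$ is harmless, as it only multiplies the whole relation $\rho^A_\eta$ by a unit.

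I would produce such scalars by induction on the number of arrows of $Q$, using the recursive structure of cyclically oriented quivers given by Propositions~\ref{prop:cyc-or-quiv}, \ref{prop:cyc-or-quiv2} and \ref{prop:cyc-or-quiv3}. If $Q$ has no oriented chordless cycle then $I=0$, $\standard Q=kQ=A$, and there is nothing to do. Otherwise fix $\eta\colon y\to x$ in a chordless cycle and pass to the subquivers $\overline\Gamma_x,\overline\Gamma_y,\overline\Gamma_1,\dots,\overline\Gamma_{t_\eta}$ of Proposition~\ref{prop:cyc-or-quiv3}: they pairwise share at most a vertex among $\{x,y\}$, none of them contains $\eta$, and each has strictly fewer arrows than $Q$. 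By Proposition~\ref{prop:cyc-or-quiv3} every chordless cycle of $Q$ other than $\eta\delta^\eta_1,\dots,\eta\delta^\eta_{t_\eta}$ lies inside a single one of these subquivers, so every minimal relation of $A$ except $\rho^A_\eta$ is supported in one block; the induction hypothesis therefore supplies rescalings normalising all of those relations. It then remains to choose $c_\eta$ and to readjust the scalars along the paths $\delta^\eta_i$ so that in addition $\rho^A_\eta$ becomes $\sum_i\delta^\eta_i$. This is possible because the $\delta^\eta_i$ are pairwise internally disjoint (Proposition~\ref{prop:cyc-or-quiv}): each $\delta^\eta_i$ carries an arrow occurring in no other $\delta^\eta_j$ — an internal arrow, or its unique arrow when $\delta^\eta_i$ has length one — which can be rescaled to absorb $\lambda_{\eta,i}$ without destroying the normalisations already achieved inside $\overline\Gamma_i$.

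The delicate point, and the one I expect to be the main obstacle, is exactly this coefficient bookkeeping. An arrow of $Q$ may lie on several of the paths $\delta^\eta_i$ for different arrows $\eta$, so the rescaling equations are genuinely coupled; and the subquivers $\overline\Gamma_i$ need not be convex in $Q$, so one must argue with some care that restricting $A$ to them again yields an algebra to which the inductive hypothesis applies, with the relations inherited from $A$, and that the relation $\rho^A_\eta$ interacts with a block only through the arrows of $\delta^\eta_i$. The structural Propositions~\ref{prop:cyc-or-quiv}--\ref{prop:cyc-or-quiv3} are precisely what confines each minimal relation to one block and furnishes the private arrows needed to absorb the coefficients. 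Once the $c_\alpha$ are in place, the corresponding automorphism of $kQ$ sends each generator of the ideal of $\standard Q$ to a unit multiple of a generator of $I$, hence maps the ideal of $\standard Q$ onto $I$, and the induced map is the desired isomorphism $\standard Q\xrightarrow{\ \sim\ }A$.
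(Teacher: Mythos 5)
Your overall strategy---realising the isomorphism as an arrow--rescaling automorphism of $kQ$ and reducing everything to the solvability of the multiplicative system $\prod_{\alpha\ \mathrm{in}\ \delta^\eta_i}c_\alpha=\mu_\eta\lambda_{\eta,i}$---is the same as the paper's, and that reduction is correct. The gap is in how you solve the system, and it sits exactly where you flag the ``delicate point''. First, the claim that every minimal relation of $A$ other than $\rho^A_\eta$ is supported in a single block is false: if $\beta$ is an arrow lying on one of the paths $\delta^\eta_i$, then among the chordless cycles through $\beta$ is the special cycle $\eta\delta^\eta_i$ itself, so $\rho^A_\beta$ has a summand passing through $\eta$ in addition to summands lying inside $\overline{\Gamma}_i$; it straddles blocks. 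Proposition~\ref{prop:cyc-or-quiv3} confines the \emph{cycles} other than $\eta\delta^\eta_1,\dots,\eta\delta^\eta_t$, not the relations. Second, and more seriously, the ``private arrow'' mechanism fails as stated: if the chosen arrow $\alpha$ of $\delta^\eta_i$ also lies on a chordless cycle $c$ inside $\overline{\Gamma}_i$, and some other arrow $\beta$ of $c$ has a relation $\rho^A_\beta$ with at least two summands of which only one (the antiparallel path inside $c$) contains $\alpha$, then rescaling $\alpha$ alone changes the ratio of coefficients in $\rho^A_\beta$ and destroys the normalisation already achieved in that block. Nothing prevents every arrow of $\delta^\eta_i$ from lying on such further cycles, so no choice of a single private arrow is safe in general.

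The paper's resolution is precisely the ingredient you are missing: instead of rescaling one arrow, it rescales by $\lambda$ \emph{all} arrows of an admissible cut $\Sigma_i$ of $\overline{\Gamma}_i$ containing $\alpha$, whose existence is Proposition~\ref{prop:adm-cut-exists}. Because every chordless cycle of $\overline{\Gamma}_i$ meets $\Sigma_i$ in exactly one arrow, every summand of every relation attached to an arrow of $\overline{\Gamma}_i$ is multiplied by the same power of $\lambda$ ($\lambda$ if that arrow lies outside $\Sigma_i$, $1$ if inside), so all those relations are preserved up to a global unit, while in $\rho^A_\eta$ only the coefficient of $\delta^\eta_i$ is multiplied by $\lambda$; the other paths $\delta^\eta_j$ are untouched since the blocks are arrow-disjoint. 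The induction is then on the number of coefficients different from $1$, fixing one coefficient per step, rather than on the number of arrows. Without this admissible-cut rescaling (or some other proof that your coupled system of equations is solvable), the argument is incomplete at its central step.
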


\begin{proof}
Let $B$ be an algebra with quiver $Q$ satisfying (R1) and (R2). We shall denote the coefficients appearing in the relation (R2) as follows:
the relation $\rho_\eta$ antiparallel to an arrow $\eta$ is $\rho_\eta=\sum_{i=1}^{t_\eta}\lambda_{\eta,i}^B\delta_{\eta,i}$, where $\delta_{\eta,1},\ldots,\delta_{\eta,t_\eta}$ are the paths antiparallel to $\eta$ and $\lambda_{\eta,i}^B$ are the non-zero coefficients.
If not all coefficients $\lambda_{\eta,i}^B$ are equal to $1$ then we
construct explicitly an algebra $B'$ with the same quiver and which
also satisfies (R1) and (R2) but which has more coefficients equal
to $1$. By induction we hence get the result.

Let $\xi\in Q_1^\cyc$ be an arrow such that the relation
$\rho_\xi=\sum_\delta \lambda^B_{\xi,\delta}\delta$ of $B$ has some
coefficient $\lambda^B_{\xi,\varphi}\neq 1$. Let $\alpha$ be an
arrow of $\varphi$ and construct an admissible cut
$$
\Sigma=\Sigma_x\cup\Sigma_y\cup\Sigma_1\cup\ldots\cup\Sigma_t
$$
containing $\alpha$ as in the proof of Proposition
\ref{prop:adm-cut-exists}. Recall that by construction $\alpha$ belongs to $\Sigma_1$.

Let $B'=kQ/I'$ be the algebra, which is
defined precisely by the same relations than $B$ with the unique exception that
$\lambda^{B'}_{\xi,\gamma}=1$ in $B'$, that is
$\lambda^{B'}_{\eta,\delta}=\lambda^B_{\eta,\delta}$ whenever
$(\eta,\delta)\neq(\xi,\gamma)$.
Define the isomorphism $f:k Q\rightarrow k Q$ by $f(e_z)=e_z$ for
each vertex $z$ and $f(\beta)=f_\beta \beta$ for each arrow $\beta$
where the coefficients $f_\beta$ are defined as follows:
$f_\beta=\lambda^B_{\xi,\varphi}$ for each $\beta\in\Sigma_1$ (that
is, $\beta$ lies in the same component as $\alpha$) and $f_\beta=1$
otherwise.

Then $f(\sum_\xi \lambda^{B'}_{\xi,\delta} \delta) =\sum_\xi
\lambda^{B}_{\xi,\delta}\delta$
and  $f(\rho^{B'}_\eta)=\rho_\eta^B$ for
each $\eta\neq \xi$ since parallel relations in $Q$ which are not
antiparallel to $\xi$ lie in the same component of
$\overline{\Gamma}_x,\overline{\Gamma}_y,\overline{\Gamma}_1,\ldots,\overline{\Gamma}_t$
by Proposition~\ref{prop:cyc-or-quiv3}.
\end{proof}

\begin{corollary}
\label{cor:standard} If $C$ is a cluster-tilted algebra whose quiver
$Q_C$ is cyclically oriented then $C$ is isomorphic to the algebra
$\standard{Q_C}$.
\end{corollary}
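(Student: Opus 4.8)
The plan is to obtain this as an immediate consequence of Propositions~\ref{prop:rel-correspond-arrow} and~\ref{prop:deformed}. First I would record that $C$ is, in the sense of Proposition~\ref{prop:deformed}, an algebra with quiver $Q_C$ whose relations satisfy the conditions {\rm(R1)} and {\rm(R2)}. Indeed, Proposition~\ref{prop:rel-correspond-arrow} asserts precisely that for a cluster-tilted algebra with cyclically oriented quiver the minimal relations are in bijection with the arrows lying on an oriented chordless cycle, and that the minimal relation $\rho_\eta$ attached to such an arrow $\eta\colon y\to x$ is a linear combination $\sum_{i=1}^t\lambda_{\eta,i}\delta_i$ of the shortest paths $\delta_1,\dots,\delta_t$ antiparallel to $\eta$, with all coefficients $\lambda_{\eta,i}$ nonzero, and that locally around these paths $Q_C$ has the shape of Proposition~\ref{prop:clu-tilt-canonical}.

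Second, in order to invoke Proposition~\ref{prop:deformed} I need to know that these minimal relations actually generate the defining ideal $I_C$, i.e. that $C = kQ_C/\langle \rho_\eta \mid \eta\in (Q_C)_1^\cyc\rangle$. This is the single point that does not follow formally from the two quoted propositions, and I expect it — rather than anything concerning the combinatorics of cyclically oriented quivers — to be the only real obstacle. It is, however, part of the known structure theory of cluster-tilted algebras: the ideal of relations of a cluster-tilted algebra is generated by its minimal relations, which can be extracted from \cite{BRS}, or from the description of the relations as coming from a potential in \cite{BIRS} (as recalled at the beginning of this section).

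Finally, once $C$ has been exhibited as an algebra with quiver $Q_C$ satisfying {\rm(R1)} and {\rm(R2)}, Proposition~\ref{prop:deformed} applies verbatim and produces an isomorphism $C\simeq \standard{Q_C}$, which is the assertion of the corollary. In particular, the nonzero coefficients $\lambda_{\eta,i}$ occurring in the minimal relations of $C$ are irrelevant up to isomorphism, and one may always normalize them to $1$.
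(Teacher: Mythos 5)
Your proposal matches the paper's argument exactly: the paper proves the corollary as an immediate consequence of Proposition~\ref{prop:rel-correspond-arrow} (which supplies {\rm(R1)} and {\rm(R2)} for $C$) and Proposition~\ref{prop:deformed} (which normalizes the coefficients). The extra point you raise — that the defining ideal of $C$ is generated by its minimal relations — is a legitimate observation that the paper leaves implicit in its use of {\rm(R1)}/{\rm(R2)}, and your appeal to \cite{BRS} and \cite{BIRS} is the right way to discharge it.
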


\begin{proof}
This is an immediate consequence of Proposition~\ref{prop:rel-correspond-arrow} and Proposition~\ref{prop:deformed}.
\end{proof}


\subsection{Admissible cuts}

An algebra $A$ with connected
quiver $Q$ with no oriented cycles is called \emph{simply connected} if for each presentation
$(Q, I)$ of $A$ the fundamental group $\pi(Q, I)$ is trivial, for precise definitions we
refer to \cite{BG} and \cite{Sko}.

\begin{proposition}
\label{prop:cut-is-ssc}
Let $C$ be a cluster-tilted algebra such that $Q_C$ is cyclically
oriented. Then each quotient of $C$ by an admissible cut is strongly simply
connected.
\end{proposition}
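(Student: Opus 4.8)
The plan is to reduce the statement to a structural analysis of the quiver of a quotient $A = C/\langle\Delta\rangle$ by an admissible cut $\Delta$, using the explicit description of the relations of $C$ provided by Corollary~\ref{cor:standard}, i.e. $C \simeq S(Q_C)$ with relations $\rho_\alpha$ (the sum of all paths antiparallel to $\alpha$) for $\alpha \in (Q_C)_1^\cyc$. The first step is to observe, via Proposition~\ref{prop:cut-directed}, that the quiver $Q_A = Q_C \setminus \Delta$ is directed (every cycle in it is non-oriented), so $A$ is a triangular algebra and it makes sense to speak of its fundamental group and of strong simple connectedness. Moreover Lemma~\ref{lem:no-bypass} tells us $Q_A$ has no bypass, which will be the crucial combinatorial input.

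The main step is to show that for \emph{every} full convex subquiver $Q'$ of $Q_A$, the algebra $A_{Q'}$ is simply connected, i.e. $\pi_1(Q', I')$ is trivial, where $I'$ is the induced ideal. The plan is to analyze the homotopy relation directly: a walk in $Q'$ is generated by the elementary moves (a), (b), (c) of the homotopy relation, and the relations of $A_{Q'}$ are inherited from those of $C$ — each is of the form $\rho_\eta = \sum_i \lambda_i \delta_i$ where the $\delta_i$ are the shortest paths antiparallel to some arrow $\eta$, and by Proposition~\ref{prop:rel-correspond-arrow}(R2) combined with the diagram in Proposition~\ref{prop:cyc-or-quiv} these paths share only their endpoints. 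The key geometric fact is that, after removing an admissible cut, from each oriented chordless cycle $\eta\delta_i$ of $C$ one arrow is gone; so the surviving relations of $A_{Q'}$ are "broken" cycles, and the diagram \eqref{eq:diag-antiparallel} shows that the remaining paths, being parallel and sharing only endpoints, generate contractible loops. Concretely, I would argue that any closed walk at a base point $x_0$ in $Q'$ is homotopic to a trivial walk by induction on the number of arrows it traverses, using that $Q'$ has no bypass (so no closed reduced walk can be shortened trivially but also every "detour" created by a relation (b) between two parallel paths bounds a disk), and that Proposition~\ref{prop:cyc-or-quiv3} guarantees the components $\overline{\Gamma}_x, \overline{\Gamma}_y, \overline{\Gamma}_1, \dots, \overline{\Gamma}_t$ are pasted along single vertices, so the fundamental group splits as a free product over these pieces and an inductive argument on the number of arrows applies.

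More precisely, the inductive scheme would be: fix an arrow $\eta$ lying in an oriented chordless cycle of $Q_C$ (if none, $Q_C$ itself is acyclic, $C$ is hereditary, $A = C$ and the result is classical). Decompose $Q_C$ as in Proposition~\ref{prop:cyc-or-quiv3} into $\overline{\Gamma}_x \cup \overline{\Gamma}_y \cup \overline{\Gamma}_1 \cup \dots \cup \overline{\Gamma}_t$, all glued along $\{x,y\}$, and note the relation $\rho_\eta$ is the unique relation involving $\eta$. When we remove the admissible cut, one arrow of each $\eta\delta_i$ disappears; the induced presentation of $A_{Q'}$ restricts to presentations of each piece that are again admissible cuts of cluster-tilted algebras with cyclically oriented quivers (of strictly fewer arrows), to which the inductive hypothesis applies, giving that each piece is strongly simply connected. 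The only loops in $Q'$ not confined to a single piece are those winding through $x$, going out along some $\overline{\Gamma}_i$ towards $y$ and back; but the relation $\rho_\eta$ (with $\eta$ possibly itself in $\Delta$) shows that any two of the parallel paths $\delta_i, \delta_j$ are homotopic, so such a loop is null-homotopic. Finally one uses that strong simple connectedness is preserved under this gluing along a pair of vertices — or, more cleanly, one checks convex subquivers of $Q_A$ directly: a convex subquiver $Q'$ of $Q_A$ is, up to killing vertices, again of the analyzed form, and killing vertices in a cluster-tilted algebra yields a cluster-tilted algebra by \cite{BMR}, so induction closes.

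The hard part will be the \emph{gluing} step: verifying that triviality of $\pi_1$ for each piece $\overline{\Gamma}_i$ (in its cut-down form) together with the homotopy $\delta_i \sim \delta_j$ forces triviality of $\pi_1(Q', I')$ for the whole convex subquiver — one must rule out that a nontrivial loop is introduced precisely by the interplay of the cut with the relation $\rho_\eta$, and one must handle carefully the case where $\eta$ itself is or is not in the admissible cut (these give genuinely different local pictures at $\{x,y\}$). I expect the argument to require care in setting up the van Kampen-type decomposition of the fundamental group along the vertex set $\{x,y\}$ and in checking that the presentation of each $A_{\overline{\Gamma}_i \cap Q'}$ really is the quotient of a cluster-tilted algebra with cyclically oriented quiver by an admissible cut — this is where Lemma~\ref{lem:no-bypass} and Proposition~\ref{prop:cut-directed} do the essential work of ruling out pathologies.
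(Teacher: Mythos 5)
Your plan assembles the right ingredients---Proposition~\ref{prop:cut-directed} for the directedness of $Q_A$, Lemma~\ref{lem:no-bypass}, the explicit relations from Corollary~\ref{cor:standard} and Proposition~\ref{prop:rel-correspond-arrow}, and the decomposition of Proposition~\ref{prop:cyc-or-quiv3}---but the argument is not actually carried out at its crux. You yourself flag the decisive step as ``the hard part'' and leave it open: namely, that triviality of the fundamental group of each piece $\overline{\Gamma}_x,\overline{\Gamma}_y,\overline{\Gamma}_1,\ldots,\overline{\Gamma}_t$ (in its cut-down form), together with the homotopies $\delta_i\sim\delta_j$ coming from $\rho_\eta$, forces triviality of $\pi_1(Q',I')$ for an arbitrary full convex subquiver $Q'$ of $Q_A$. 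You also do not verify the two facts your induction depends on: that the restriction of $A$ to a convex subquiver (or to one of the pieces $\overline{\Gamma}_i$) is again an admissible cut of a cluster-tilted algebra with cyclically oriented quiver, and that a van Kampen decomposition along the two-point set $\{x,y\}$ behaves as you hope (gluing along two vertices, unlike gluing along one, can a priori create a new generator of $\pi_1$, which is exactly the case you say must be ruled out). As it stands this is a roadmap whose load-bearing verification is missing, so it does not yet constitute a proof.

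For comparison, the paper's own argument is much shorter and avoids any decomposition or induction on the number of arrows. It uses only two facts: first, by Proposition~\ref{prop:cut-directed} the quiver of the cut is directed, so one may speak of convex subquivers and every closed walk is built from oriented paths and their inverses; second, by Proposition~\ref{prop:rel-correspond-arrow} (ultimately step (ii) of the proof of Proposition~\ref{prop:clu-tilt-canonical}, where all shortest antiparallel paths are shown to lie in a single homotopy class because they occur with nonzero coefficients in one minimal relation), any two parallel paths between two vertices of a full convex subquiver are homotopic. From the uniqueness of the homotopy class of parallel paths in a directed quiver the paper concludes that every convex subalgebra is simply connected, hence $A$ is strongly simply connected. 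If you wish to rescue your approach you would have to write out the van Kampen argument at $x$ and $y$ in full; but the parallel-paths argument makes the decomposition of Proposition~\ref{prop:cyc-or-quiv3} unnecessary for this particular statement.
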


\begin{proof}
We know from Proposition~\ref{prop:cut-directed} that the quiver
$Q_A$ of the cut $A$ of $C$ is directed. Now, by Proposition
\ref{prop:rel-correspond-arrow}, we know that for each full and
convex subalgebra $B'$ of $B$ and any two vertices $x$ and $y$ of
$Q_{B'}$  the paths from $x$ to $y$ in $Q_{B'}$ form a unique
homotopy class. Therefore $B'$ is simply connected. This shows that
$B$ is strongly simply connected.
\end{proof}

\begin{theorem}
\label{thm:implicaciones}
  Let $C$ be a cluster-tilted algebra whose quiver is cyclically
  oriented and let $A$ be a finite-dimensional algebra with $\gldim
  A\leq 2$, such that $\calg{A}$ is a finite dimensional algebra.
  Then $\calg{A}\simeq C$ if and only if $A$ is the quotient of
  $C$ by an admissible cut.

\end{theorem}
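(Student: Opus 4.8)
The plan is to prove both implications by reducing to the relation extension $\ealg{A}$ and then invoking the structural results established earlier in the section. The key observation is Lemma~\ref{lem:cut-iff}, which tells us that $A$ is a quotient of $\calg{A}$ by an admissible cut if and only if $A$ is a quotient of $\ealg{A}$ by an admissible cut; so throughout I would freely switch between $\calg{A}$ and $\ealg{A}$, working with whichever is more convenient. For the direction ``$A$ is an admissible cut of $C$ $\Rightarrow$ $\calg{A}\simeq C$'', the starting point is that $C$ is cluster-tilted with a cyclically oriented quiver, so by Corollary~\ref{cor:standard} we have $C\simeq \standard{Q_C}$, and by Proposition~\ref{prop:cut-is-ssc} the algebra $A$ is strongly simply connected with a directed quiver (Proposition~\ref{prop:cut-directed}). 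The idea is then to show that $\ealg{A}$ has quiver $Q_C$ and satisfies (R1) and (R2), so that by Proposition~\ref{prop:deformed} we get $\ealg{A}\simeq \standard{Q_C}\simeq C$; since for a triangular algebra $A$ with $\gldim A \le 2$ the algebra $\calg{A}$ agrees with $\ealg{A}$ (the tensor algebra has no higher-degree contributions once $A$ is directed and the cut removes exactly the arrows dual to $\Ext^2$), this yields $\calg{A}\simeq C$.

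For the reverse direction ``$\calg{A}\simeq C$ $\Rightarrow$ $A$ is an admissible cut of $C$'', I would argue as follows. By Lemma~\ref{lem:D1} the algebra $\calg{A}$ is a split extension of $A$, and by \cite[Proposition 4.16]{Amiot} the quivers $Q_{\calg{A}}$ and $Q_A$ differ precisely by the arrows dual to a basis of $\Ext^2_A(\dual A, A)$; the content to extract is that the set $\Delta$ of these extra arrows forms an admissible cut of $Q_C = Q_{\calg{A}}$. Here is where the cyclic orientation of $Q_C$ enters decisively: by Proposition~\ref{prop:rel-correspond-arrow}, the minimal relations of $C$ are in bijection with the arrows of $Q_C$ lying on oriented chordless cycles, and each such relation $\rho_\eta$ is a sum over the shortest paths antiparallel to $\eta$. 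Because $A$ has global dimension at most two, the new arrows of $\calg{A}$ over $A$ correspond to a minimal set of generators of $\Ext^2$, which in turn correspond (via the standard identification of $\Ext^2$ with ``relations modulo the radical square of relations'') to the minimal relations of $A$ — and by the description of the relations of $C=\calg{A}$, hitting each oriented chordless cycle exactly once. Thus $\Delta$ is an admissible cut, and $A = kQ_C/\gen{I_C \cup \Delta}$, which is exactly a quotient of $C$ by an admissible cut.

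The main obstacle, and the step that requires the most care, is the reverse direction's verification that the arrows added in passing from $A$ to $\calg{A}$ form an admissible cut — i.e. that they hit each oriented chordless cycle \emph{exactly once} rather than merely at least once. The ``at least once'' part follows because $Q_A$ is a subquiver of the directed-quiver situation and an oriented cycle cannot survive in $Q_A$ (else $A$ would not have finite global dimension in the required way, or more precisely the relation structure of $C$ forces a relation antiparallel to each arrow of the cycle). The ``exactly once'' part is where one needs Proposition~\ref{prop:rel-correspond-arrow}(R2) together with Proposition~\ref{prop:adm-cut-exists}: each minimal relation of $C$ is antiparallel to a single arrow, and a basis of $\Ext^2_A(\dual A, A)$ has size equal to the number of minimal relations of $A$, which by the (R1)/(R2) description of $C$ equals the number of oriented chordless cycles; a counting/injectivity argument, combined with the fact that two distinct added arrows lying on a common chordless cycle would produce a relation of $A$ that is not minimal (contradicting the correspondence), then pins down that $\Delta$ is precisely an admissible cut. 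Once this is in hand, Lemma~\ref{lem:cut-iff} upgrades ``$A$ is an admissible cut of $\calg{A}$'' to ``$A$ is an admissible cut of $C$'', completing the proof.
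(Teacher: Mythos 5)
Your reduction of the ``only if'' direction to the relation extension via Lemma~\ref{lem:cut-iff}, and the idea of matching the new arrows of $\ealg{A}$ (one per minimal relation of $A$) against the arrows lying on oriented chordless cycles of $Q_C$ via (R1)/(R2), is essentially the paper's strategy. But the step you yourself identify as the main obstacle is left as a sketch, and the two justifications you offer for it are not the ones that work. For ``each oriented chordless cycle meets $\Phi$ at least once'' the reason cannot be that ``an oriented cycle cannot survive in $Q_A$'': at that stage nothing tells you that $Q_A$ is directed (that is Proposition~\ref{prop:cut-directed}, available only \emph{after} you know $A$ is a cut), and $\gldim A\le 2$ alone does not force directedness. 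The paper's argument is different: if $\delta_1\notin\Phi$, the minimal relation of $C$ antiparallel to $\delta_1$ would descend (minimal relations are preserved along the epimorphisms $\calg{A}\to\ealg{A}\to A$ of Lemma~\ref{lem:D1}) to a minimal relation of $A$ antiparallel to the arrow $\delta_1$ of $Q_A$, forcing a double arrow in $Q_{\ealg{A}}=Q_C$, which is impossible in a cyclically oriented quiver; hence the summand $\delta_t\cdots\delta_2$ must die, so some $\delta_i\in\Phi$. For ``at most once'' the paper does not count: it reorders so that $\delta_t\in\Phi$, notes that the corresponding minimal relation of $A$ remains minimal in $\ealg{A}$ and therefore coincides with the unique relation of $C$ antiparallel to $\delta_t$, which by (R2) contains $\delta_{t-1}\cdots\delta_1$ as a summand with nonzero coefficient; a relation of $A$ lives in $kQ_A$, so none of $\delta_1,\ldots,\delta_{t-1}$ lies in $\Phi$. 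Your proposed ``counting/injectivity'' step is neither needed nor, as phrased, obviously closable.

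The ``if'' direction contains a genuine gap. You propose to verify (R1)--(R2) for $\ealg{A}$, apply Proposition~\ref{prop:deformed} to get $\ealg{A}\simeq\standard{Q_C}\simeq C$, and then pass to $\calg{A}$ by asserting that $\calg{A}=\ealg{A}$ because ``the tensor algebra has no higher-degree contributions once $A$ is directed''. That assertion is unjustified and false in general: $\calg{A}=\Tensor_A(\Ext^2_A(\dual A,A))$ coincides with the trivial extension $\ealg{A}$ only when $\Ext^2_A(\dual A,A)\otimes_A\Ext^2_A(\dual A,A)=0$, and directedness of $Q_A$ does not guarantee this --- the whole point of Lemma~\ref{lem:D1} is that in general $\ealg{A}$ is merely a proper quotient of $\calg{A}$. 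Moreover, pinning down the full relation ideal of $\ealg{A}$ in order to check (R2) is itself a nontrivial task you do not carry out. The paper sidesteps both problems: it only establishes $Q_{\ealg{A}}\simeq Q_C\simeq Q_{\calg{A}}$ (each arrow of the cut corresponds, by Proposition~\ref{prop:rel-correspond-arrow}, to a minimal relation of $A$ antiparallel to it) and then invokes \cite[Corollary 2.4]{BIRS} to upgrade the coincidence of quivers to $\calg{A}\simeq C$. Without that external input, or without a direct verification that $\calg{A}$ itself (rather than $\ealg{A}$) satisfies (R1)--(R2), your argument does not reach the conclusion $\calg{A}\simeq C$.
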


\begin{proof}
Assume that $\calg{A}\simeq C$.
We know from \cite{Amiot} that $Q_{\calg{A}}=Q_{\ealg{A}}$. Let $\pi$
be the canonical projection $\pi\colon\calg{A}\rightarrow\ealg{A}$.
Since $\pi$ is an epimorphism
of algebras we infer that if $\rho$ is a minimal relation for $\calg{A}$ then
$\pi(\rho)$ is a minimal relation for $\ealg{A}$.

  By Lemma~\ref{lem:cut-iff}, it suffices to show that $A$ the quotient of
   $\ealg{A}$ by an admissible cut. Take any presentation of $A$ and extend
  it to a presentation of $\ealg{A}$. Since $Q_{\calg{A}}=Q_{\ealg{A}}$ and there are no multiple
  arrows in $\ealg{A}$, no arrow can be
  parallel to a minimal relation in $A$.

  Now, let
  $\delta=\delta_t\delta_{t-1}\cdots\delta_1$ be a chordless
   oriented circle in
  $Q_C$ with $\delta_1\not\in\Phi$, where $\Phi$ is the set of
   arrows of $Q_C$ that are not arrows of $A$.

    By Proposition~\ref{prop:rel-correspond-arrow},
  there exists a minimal relation $\rho$  antiparallel to $\delta_1$,
  which has the path $\gamma=\delta_t\cdots\delta_{2}$ as
  summand. By the above $\mu(\rho)$ cannot be a minimal relation
  for $A$, and therefore at least one of the
  arrows $\delta_2,\ldots,\delta_t$ belongs to $\Phi$.

  Reorder the indices of the cycle such that $\delta_t\in\Phi$.
  Then there exists a minimal relation $\rho$ for $A$ antiparallel
  to $\delta_t$. Since $\ealg{A}$ is a relation-extension of $A$, the
  relation $\rho$ is also a minimal relation for $\ealg{A}$. But
  for $\ealg{A}$ there exists a unique relation antiparallel to
  $\delta_t$, and this unique relation contains
  $\gamma=\delta_{t-1}\cdots\delta_1$ as summand. This shows that none
  of the arrows $\delta_1,\ldots,\delta_{t-1}$ belongs to $\Phi$.
  Altogether, we have proved that of each oriented chordless cycle precisely
  one arrow belongs to $\Phi$.

  For the reverse implication suppose that $A$ is the quotient of  $C$ by an admissible cut,
   and let
$\Phi$ be the set of arrows of $Q_C$ which do not belong to $Q_A$.
By
  definition of admissible cut, each arrow $\gamma\in\Phi$ belongs to
  an oriented cycle and therefore corresponds to a relation
  $\rho_\gamma$ which is antiparallel to $\gamma$ by Proposition~\ref{prop:rel-correspond-arrow}. Therefore, the quiver
  $Q_{\ealg{A}}$ of the relation extension of $A$ is isomorphic to the
  quiver $Q_C$ and also to the quiver $Q_{\calg{A}}$ of $\calg{A}$.
 Then by \cite{BIRS}, Corollary 2.4, we have that $\calg{A}\simeq
 C$.

\end{proof}

\subsection{Dynkin and extended Dynkin case}

We now focus on two cases which are of particular interest, namely
when $C$ is a cluster-tilted algebra of Dynkin type $\AA$, $\DD$ or
$\EE$ or when extended Dynkin type $\tilde{\DD}$ or
$\tilde{\EE}$. The example in the introduction shows
that the following theorem can not be extended to type $\tilde{\AA}$.
Note that if $C$ is a cluster tilted algebra
of type $\tilde{\AA}$ its ordinary quiver is not cyclically oriented,
since $C$ by \cite{ABS}, is the relation extension of representation infinite
tilted algebra of type $\tilde{\AA}$, which is a branch enlargement of a tame
concealed algebra of type $\tilde{\AA}$.

\begin{proposition}
\label{thm:iteratedtilted}
  Let $C$ be a cluster-tilted algebra of Dynkin or extended Dynkin type
  $\Delta$ such that $Q_C$ is cyclically oriented.  Let
  $A$ be a quotient by an admissible cut of $C$ of $\gldim A\leq 2$. Then
  $A$ is derived equivalent to $k \Delta$.
\end{proposition}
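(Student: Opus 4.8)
The plan is to show that $A$ is a \emph{tilted} algebra of type $\Delta$; since tilted algebras are derived equivalent to the corresponding hereditary algebra, this gives the claim. The starting point is Proposition~\ref{prop:cut-is-ssc}, which tells us that $A$ is strongly simply connected, and Proposition~\ref{prop:cut-directed}, which tells us that $Q_A$ is directed (in particular $A$ is triangular). The crucial extra input is the relation extension: since $A$ is a cut of $C$ and $Q_C$ is cyclically oriented, Proposition~\ref{prop:rel-correspond-arrow} together with the reverse implication in Theorem~\ref{thm:implicaciones} identifies $\ealg{A}=A\ltimes\Ext^2_A(\dual A,A)$ with $C$ up to isomorphism. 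So we are in the situation: $A$ is a strongly simply connected algebra of global dimension $\le 2$ whose relation extension $\ealg{A}$ is a cluster-tilted algebra of Dynkin or extended Dynkin type $\Delta$.

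Now I would invoke the characterization of algebras whose relation extension is cluster-tilted. By the theory of Assem--Br\"ustle--Schiffler (\cite{ABS}), the relation extension of a tilted algebra of type $\Delta$ is cluster-tilted of type $\Delta$; conversely, a cluster-tilted algebra of type $\Delta$ arises as the relation extension of (at least one, typically many) tilted algebras of type $\Delta$. The remaining point is that \emph{our particular} $A$, singled out by the admissible cut, is one of these tilted algebras rather than some other algebra with the same relation extension. Here the strong simple connectedness is doing the work: a strongly simply connected algebra $A$ with $\gldim A \le 2$ and $\ealg{A}$ cluster-tilted of Dynkin or extended Dynkin type is iterated tilted, hence tilted, of that type. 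Concretely I would argue that $A$, being strongly simply connected, representation-directed (for Dynkin $\Delta$) or tame with the appropriate structure (for $\tilde\DD,\tilde\EE$), and having its derived category determined by that of $\ealg{A}$ via the standard triangle relating $A$ and $\ealg{A}$, must have the same derived type as $k\Delta$; the classification of self-injective algebras / the list of tame strongly simply connected algebras pins this down.

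A cleaner route, which I would prefer to write up, is the following. Since $A$ is a quotient of $C$ by an admissible cut and $C\simeq\ealg{A}$, the bounded derived category $\Der(A)$ contains $A$ as a tilting-like object whose endomorphism ring relates back to $C$; but more directly, there is a general principle (used by Amiot, and by Barot--Fern\'andez--Platzeck--Pratti--Trepode in \cite{BFPPT}) that if $C$ is the Jacobian algebra / relation extension associated to a Dynkin or extended Dynkin quiver and $A$ is an admissible cut of $C$, then $A$ is an iterated tilted algebra of that type. For Dynkin type this is essentially \cite{BFPPT}; for extended Dynkin types $\tilde\DD,\tilde\EE$ one uses that $Q_C$ cyclically oriented forces $C$ to be the relation extension of a representation-finite or tame concealed-plus-branch tilted algebra, and that the cuts of such $C$ are exactly the tilted algebras in the derived equivalence class of $k\Delta$. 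I would therefore reduce to citing these structural results and checking that the hypotheses (cyclically oriented quiver, $\gldim A\le 2$, admissible cut) match.

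The main obstacle is the extended Dynkin case: there one must rule out that a cut $A$ of $C$ could be, say, a tubular algebra or some other tame non-tilted algebra with the same relation extension, and one must be careful that $\ealg{A}$ being cluster-tilted of type $\tilde\DD$ or $\tilde\EE$ (not $\tilde\AA$, which is excluded precisely because its quiver is never cyclically oriented, as remarked before the proposition) genuinely forces $A$ to be tilted and not merely iterated tilted of a larger type. I expect to handle this by combining strong simple connectedness with the fact that, for these types, the list of strongly simply connected tame algebras of the relevant global dimension is known and the only ones with cluster-tilted relation extension of type $\Delta$ are the tilted algebras of type $\Delta$; then derived equivalence to $k\Delta$ follows from Happel's theorem.
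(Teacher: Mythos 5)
Your setup is correct as far as it goes---strong simple connectedness of $A$ (Proposition~\ref{prop:cut-is-ssc}) and directedness of $Q_A$ (Proposition~\ref{prop:cut-directed}) are exactly the properties the argument must hinge on---but the step that actually produces the derived equivalence is missing. Knowing that $\calg{A}\simeq C$ (or $\ealg{A}\simeq C$) does not by itself single out $A$ among all algebras with that relation extension, and your way of resolving this, namely that ``the only strongly simply connected algebras with cluster-tilted relation extension of type $\Delta$ are the tilted algebras of type $\Delta$,'' is essentially a restatement of what has to be proved rather than a citable theorem. You yourself flag this as the main obstacle in the extended Dynkin case, but you do not overcome it: there is no finished classification of strongly simply connected tame algebras at the level of generality you would need, and nothing in \cite{ABS} pins down which quotients of $C$ are (iterated) tilted. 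A further slip, harmless for the conclusion but worth noting, is ``iterated tilted, hence tilted'': iterated tilted algebras are in general not tilted, though iterated tilted of type $\Delta$ already gives derived equivalence to $k\Delta$, which is all the proposition claims.

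The paper closes this gap with a quadratic-form argument that your proposal never touches. By \cite{BGZ} in the Dynkin case and \cite{Seven} in the extended Dynkin case, the quiver $Q_C$ admits a quasi-Cartan companion $M$ that is positive definite, respectively positive semi-definite of corank one; repeating the computation of \cite[Prop.~4.19]{BFPPT} shows that the Euler form of the cut $A$ is equivalent to $M$. One then applies the criterion of Assem--Skowro\'nski \cite{AS} (a strongly simply connected algebra of global dimension at most two with positive definite Euler form is iterated tilted of the corresponding Dynkin type), respectively its semi-definite analogue due to Barot--de la Pe\~na \cite{BdP}, and derived equivalence with $k\Delta$ follows in one stroke, uniformly in both cases. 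If you want to rescue your route, you must supply a concrete invariant of this kind (or an explicit sequence of reflections/tilts realizing the equivalence); as written, the decisive implication is asserted but not established.
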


\begin{proof}
  If $\calg{A}\simeq C$ then $A$ is the quotient of $C$ by an admissible cut by Proposition~\ref{thm:implicaciones}(a).

  So suppose now that $A$ is a quotient by an admissible cut of $C$.

  We know from Proposition~\ref{prop:cut-is-ssc} that $A$ is strongly
  simply connected. If $\Delta$ is a Dynkin diagram then by \cite{BGZ}
  there exists a quasi-Cartan companion $M$ which is positive
  definite. If $\Delta$ is an extended Dynkin diagram then by
  \cite{Seven}, there exists a quasi-Cartan companion $M$ which is
  positive semi-definite of corank one.

  The proof of \cite[Prop.~4.19]{BFPPT} can be repeated literally to
  show that the Euler form of $A$ is equivalent to $M$.  In the case that $\Delta$ is of Dynkin type, we get that the quadratic form of $A$ is positive definite and $A$ is strongly simply connected by Propostition~\ref{prop:cut-is-ssc}. Hence it follows by \cite{AS} that $A$ is derived equivalent to $k\Delta$. In the case where $\Delta$ is of
  extended Dynkin type, we get that the quadratic form of $A$ is positive semi-definite and $A$ is strongly simply connected by Proposition~\ref{prop:cut-is-ssc}. Hence it follows from \cite{BdP} that
  $A$ is derived equivalent to $k\Delta$.

\end{proof}

\begin{theoremunotres}
Let $A$ be a finite-dimensional algebra with $\gldim
  A\leq 2$, such that $\calg{A}$ has a cyclically oriented quiver.
  Then $\calg{A}$ is cluster tilted of Dynkin or extended Dynkin
  type $\Delta$ if and only if $A$ is derived equivalent to a hereditary algebra
  $H$ of Dynkin or extended Dynkin type $\Delta$.
\end{theoremunotres}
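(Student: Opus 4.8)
The plan is to derive Theorem 1.4 as a combination of Theorem~\ref{thm:adm-cut-clu-tilt} (which gives the bijective correspondence between admissible cuts and the preimages of $C$ under the operation $A\mapsto\calg{A}$) and Proposition~\ref{thm:iteratedtilted} (which handles the derived equivalence in the Dynkin and extended Dynkin cases). First I would prove the direction ``$\calg{A}$ cluster-tilted of type $\Delta$ $\Rightarrow$ $A$ derived equivalent to $k\Delta$''. Set $C=\calg{A}$, which by hypothesis is cluster-tilted of Dynkin or extended Dynkin type $\Delta$ and has cyclically oriented quiver. Since $\gldim A\le 2$ and $\calg{A}\simeq C$, Theorem~\ref{thm:adm-cut-clu-tilt} applies and tells us that $A$ is the quotient of $C$ by an admissible cut. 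Then Proposition~\ref{thm:iteratedtilted}, applied to this admissible-cut quotient $A$ (which has $\gldim A\le 2$ by assumption), gives directly that $A$ is derived equivalent to $k\Delta$, so we may take $H=k\Delta$.

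For the converse, suppose $A$ is derived equivalent to a hereditary algebra $H=k\Delta$ of Dynkin or extended Dynkin type $\Delta$. Here I would first note that the construction $A\mapsto\calg{A}$ is essentially a derived invariant in the relevant sense: by Amiot's work the cluster category $\Clu_A$ depends only on the derived category $\Der(A)$ together with its canonical autoequivalence, and since $\Der(A)\simeq\Der(k\Delta)$ as triangulated categories, we get $\Clu_A\simeq\Clu_{k\Delta}=\Clu_H$. Because $A$ itself corresponds, under this equivalence, to a tilting object $T$ of $\Der(H)$, we have $\calg{A}=\End_{\Clu_A}(A)\simeq\End_{\Clu_H}(T)$, which is by definition a cluster-tilted algebra of type $\Delta$. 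It remains only to observe that $\calg{A}$ has cyclically oriented quiver — but this is part of the standing hypothesis of the theorem, so there is nothing further to check. (In the extended Dynkin case one should note that type $\tilde{\AA}$ is automatically excluded by the hypothesis that $Q_{\calg{A}}$ is cyclically oriented, as remarked just before Proposition~\ref{thm:iteratedtilted}; so ``extended Dynkin type $\Delta$ with $Q_{\calg{A}}$ cyclically oriented'' means $\Delta$ of type $\tilde{\DD}$ or $\tilde{\EE}$, and these cluster-tilted algebras indeed have cyclically oriented quiver.)

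The main obstacle, and the point that deserves the most care, is making the converse argument genuinely rigorous: one must be sure that $\calg{A}$ really is computed by the cluster category and that a derived equivalence $\Der(A)\simeq\Der(H)$ transports the object $A$ to a tilting (equivalently cluster-tilting) object of $\Clu_H$ whose endomorphism algebra is $\calg{A}$. This is where one invokes Amiot's identification $\calg{A}\simeq\End_{\Clu_A}(A)\simeq\Tensor_A(\Ext^2(\dual A,A))$ from Lemma~\ref{lem:D1} together with the fact that the triangulated hull of the orbit category is a derived invariant; a clean reference here is Amiot's \cite{Amiot} for hereditary endpoints and the functoriality of the construction. Everything else is bookkeeping: checking that the type $\Delta$ is preserved (immediate, since derived equivalence of hereditary algebras preserves the underlying graph), and matching up the Dynkin versus extended Dynkin dichotomy on both sides, which is exactly the content already packaged in Proposition~\ref{thm:iteratedtilted}.
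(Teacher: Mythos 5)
Your proposal is correct and follows essentially the same route as the paper: one direction combines Theorem~\ref{thm:adm-cut-clu-tilt} with Proposition~\ref{thm:iteratedtilted} exactly as you do, and for the other direction the paper simply cites \cite[Thm.~1.1]{BFPPT} to conclude that $\calg{A}=\End_{\Clu_H}(T)$ is cluster-tilted of type $\Delta$, which is the step you sketch via the derived invariance of the cluster category.
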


\begin{proof}
  Assume $A$ is derived equivalent to a hereditary algebra  $H$ of type $k \Delta$. Hence there exists
  a tilting complex $T$ such that $A$ is isomorphic to
  $\End_{\Der(H)}(T)$. By \cite[Thm. 1.1]{BFPPT}, the
  algebra $\calg{A}=\End_{\Clu(H)}(T)$ is a cluster-tilted algebra of type $k \Delta$.

  Conversely, consider $\calg{A}$ a cluster tilted algebra of Dynkin or extended Dynkin
  type $k \Delta$, whose quiver is cyclically oriented. It follows by Theorem~\ref{thm:implicaciones} that $A$ is a
 quotient by an admissible cut of $\calg{A}$.
 Applying Proposition~\ref{thm:iteratedtilted}, it follows that
   $A$ is derived equivalent to a hereditary algebra $H$ of type $\Delta$.
\end{proof}

\begin{remark}
  Let $C$ be of minimal infinite type, that is, each quotient by a
  non-zero idempotent is of finite type. Then each {quotient  of
  $C$ by an admissible cut} is either of finite type or tame concealed. Indeed, if $A$ is the quotient of $C$ by an
  admissible cut and $A$ is not finite type then each quotient $A'=A/AeA$  by a non-zero idempotent then $A'$ is
  an admissible cut of
  $C'=C/CeC$. Since $C'$ is of finite type, also $A'$ is of finite type. Now, since $A$ is strongly simply connected, it
   admits a preprojective component, see \cite{D-JAP} and therefore by  \cite{Bo, HaVo} $A$ is tame concealed.
\end{remark}



\begin{thebibliography}{99}
\bibitem{Amiot}{Claire Amiot: {\it Cluster categories for algebras of global dimension 2 and quivers with potential.}  {\tt arXiv:0805.1035}}

\bibitem{ABS}{Ibrahim Assem, Thomas Br\"ustle, Ralph Schiffler: {\it Cluster-tilted algebras as trivial extensions.} Bull. Lond. Math. Soc. 40 (2008), no. 1, 151--162.}

\bibitem{ACT}{Ibrahim Assem, Fl\'avio U. Coelho, Sonia Trepode: {\it Simply connected tame quasi-tilted algebras.} J. Pure Appl. Algebra 172 (2002), no. 2-3, 139--160.}

\bibitem{AO} {Claire Amiot, Steffen Oppermann: {\it Cluster equivalence and graded derived
equivalence.} arXiv:1003.4916.}

\bibitem{AO2}{Claire Amiot, Steffen Oppermann: {\it Algebras of tame acyclic cluster type.}
arXiv:1009.4065v1}

\bibitem{AS}{Ibrahim Assem, Andrzej Skowro\'nski: {\it  Quadratic forms and iterated tilted algebras.} J. Algebra 128 (1990), no. 1, 55--85.}

\bibitem{AS2}{Ibrahim Assem, Andrzej Skowro\'nski: {\it On some classes of simply connected algebras.}
Proc. London Math. Soc. (3) 56 (1988), no. 3, 417--450. }

\bibitem{BGZ}{Michael Barot, Christof Geiss, Andrei Zelevinsky: {\it Cluster algebras of finite type and positive symmetrizable matrices.} J. London Math. Soc. (2) 73 (2006), no. 3, 545--564.}

\bibitem{BFPPT}{Michael Barot, Elsa Fernandez, María In\'es Platzeck, Nilda Isabel Pratti, Sonia Trepode:
{\it From iterated tilted algebras to cluster-tilted algebras.}
Advances in Mathematics
Volume 223, (2010) no. 4, 1468--1494.}

\bibitem{BdP}{Michael Barot, Jos\'e Antonio de la Pe\~na: {\it Derived tubular strongly simply connected algebras.} Proc. Amer. Math. Soc. 127 (1999), no. 3, 647--655.}

\bibitem{Bo}{Klaus Bongartz: {\it Critical simply connected algebras.} Manuscripta Math. 46 (1984), no. 1-3, 117--136.}

\bibitem{BG}{O. Bretscher, P. Gabriel: \sl{The standard from of a
    representation-finite algebra}. Bull. Soc. Math. France 111 (1983),
  no. 1, 21--40.}

\bibitem{BIRS}{Aslak Bakke Buan, Osuma Iyama,
Idun Reiten, David Smith: {\it Mutation of cluster-tilting objects and potentials.}
{\tt ArXiv:0804.3813arXiv:0804.3813}
}

\bibitem{BMR}{Aslak Bakke Buan, Robert J. Marsh, Idun Reiten: {\it Cluster mutation via quiver representations.} Comment. Math. Helv. 83 (2008), no. 1, 143--177.}

\bibitem{BMR2}{Aslak Bakke Buan, Robert J. Marsh, Idun Reiten: {\it  Cluster-tilted algebras of finite representation type.} J. Algebra 306 (2006), no. 2, 412--431.}

\bibitem{BRS}{Aslak Bakke Buan, Idun Reiten, Idun, Ahmet Seven:
{\it Tame concealed algebras and cluster quivers of minimal infinite type.}
J. Pure Appl. Algebra 211 (2007), no. 1, 71--82. }

\bibitem{D-JAP}{Peter Dr\"axler, Jos\'e Antonio de la Pe\~na: {\it On the existence of postprojective components in the Auslander-Reiten quiver of an algebra.}
Tsukuba J. Math. 20 (1996), no. 2, 457--469.}

\bibitem{F}{
Elsa Fern\'andez : {\it Extensiones triviales y \'algebras inclinadas iteradas.} Ph.D. Thesis, Universidad
Nacional del Sur, Argentina (1999).}

\bibitem{FP}{Elsa Fern\'andez, María In\'es Platzeck: {\it Isomorphic trivial extensions of finite dimensional algebras. J. Pure Appl. Algebra 204} (2006), no. 1, 9--20.}

\bibitem{FZ}{S.~Fomin, A.~Zelevinsky: {\sl Cluster algebras I:
      Foundations}. J. Amer. Math. Soc. 15 (2002), no. 2, 497--529.}

\bibitem{HaVo}{Dieter Happel, Dieter Vossieck:
{\it Minimal algebras of infinite representation type with preprojective component.}
Manuscripta Math. 42 (1983), no. 2-3, 221--243. }

\bibitem{Seven}{Ahmet I. Seven: {\it Cluster Algebras and Semipositive Symmetrizable Matrices.} {\tt arXiv:0804.1456}}

\bibitem{Sko}{
A.~Skowro\'nski: {\it Simply connected algebras and Hochschild
  cohomologies.\/}  Representations of algebras (Ottawa, ON, 1992),
431--447, CMS Conf. Proc., 14, Amer. Math. Soc., Providence, RI,
1993.}


\end{thebibliography}
\end{document}